%% file: main.tex
\documentclass{article}

\usepackage{microtype}
\usepackage{graphicx}
\usepackage{subcaption}
\usepackage{booktabs} 
\usepackage{tablefootnote}

\usepackage{hyperref}
\usepackage{xurl}  

\let\icmlorigaddcontentsline\addcontentsline

\usepackage[preprint]{icml2026}

\usepackage{amsmath}
\usepackage{amssymb}
\usepackage{mathtools}
\usepackage{amsthm}
\usepackage{dsfont}

\usepackage[capitalize,noabbrev,nameinlink]{cleveref}

\usepackage{pgf}
\usepackage{tikz}
\usepackage{pgfplots}
\pgfplotsset{compat=1.18}
\usetikzlibrary{positioning,arrows.meta,fillbetween}
\usepgfplotslibrary{fillbetween}
\usepackage{siunitx}
\usepackage{thmtools, thm-restate}
\declaretheorem{theorem}
\declaretheorem[sibling=theorem]{lemma}

\declaretheorem[sibling=theorem]{definition}

\usepackage{xparse}
\usepackage{xargs}

\newcommand\linktoproof[1]{{\hspace{-0.3em}\normalfont[{\hyperlink{proof:#1}{$\downarrow$}}]}}
\newcommand{\newtarget}[1]{\hypertarget{#1}{}}
\newcommand\linkofproof[1]{(\cref{#1})\newtarget{proof:#1}\ }

\newif\ifshowtodos
\showtodosfalse 

\DeclareMathOperator*{\argmin}{arg\,min}
\DeclareMathOperator{\R}{\mathbb{R}}

\newcommand{\norm}[1]{\Vert#1\Vert}
\newcommand{\innp}[1]{\langle #1\rangle}
\newcommand{\defi}{\stackrel{\text{def}}{=}}
\newcommand{\Int}{\operatorname{Int}}
\newcommand{\Span}{\operatorname{Span}}

\input{algorithm_config.tex}

\usepackage[acronym]{glossaries}

\newacronym{fw}{FW}{Frank--Wolfe}
\newacronym{lmo}{LMO}{Linear Minimization Oracle}     
\newacronym{pep}{PEP}{Performance Estimation Problem}
\makeglossaries
\glsdisablehyper

\icmltitlerunning{Lower Bounds for Frank-Wolfe on Strongly Convex Sets}

\begin{document}

\twocolumn[
  \icmltitle{Lower Bounds for Frank-Wolfe on Strongly Convex Sets}



  \icmlsetsymbol{equal}{*}

  \begin{icmlauthorlist}
    \icmlauthor{Jannis Halbey}{ZIB,TUB}
    \icmlauthor{Daniel Deza}{PRU}
    \icmlauthor{Max Zimmer}{ZIB,TUB}
    \icmlauthor{Christophe Roux}{ZIB,TUB}
    \icmlauthor{Bartolomeo Stellato}{PRU}
    \icmlauthor{Sebastian Pokutta}{ZIB,TUB}
  \end{icmlauthorlist}

  \icmlaffiliation{PRU}{Operations Research and Financial Engineering, Princeton University, Princeton, USA}
  \icmlaffiliation{ZIB}{AI in Society, Science, and Technology, Zuse Institute Berlin, Berlin, Germany}
  \icmlaffiliation{TUB}{Institute of Mathematics, Technische Universität Berlin, Berlin, Germany}

  \icmlcorrespondingauthor{Jannis Halbey}{halbey@zib.de}

  \icmlkeywords{Convex Optimization, Frank-Wolfe, Lower Bounds, Strongly Convex Sets}

  \vskip 0.3in
]



\printAffiliationsAndNotice{}  

\begin{abstract}
    We present a constructive lower bound of $\Omega(1/\sqrt{\varepsilon})$ for \gls{fw} when both the objective and the constraint set are smooth and strongly convex, showing that the known uniform $\mathcal{O}(1/\sqrt{\varepsilon})$ guarantees in this regime are tight.
    It is known that under additional assumptions on the position of the optimizer, \gls{fw} can converge linearly. However, it remained unclear whether strong convexity of the set can yield rates \emph{uniformly} faster than $\mathcal{O}(1/\sqrt{\varepsilon})$, i.e., irrespective of the position of the optimizer.
    To investigate this question, we focus on a simple yet representative problem class: minimizing a strongly convex quadratic over the Euclidean unit ball, with the optimizer on the boundary.
    We analyze the dynamics of \gls{fw} for this problem in detail and develop a novel computational approach to construct worst-case \gls{fw} trajectories, which is of independent interest. Guided by these constructions, we develop an analytical proof establishing the lower bound.
  \end{abstract}

\section{Introduction}
The \gls{fw} algorithm is a classical first-order method for constrained optimization designed for optimization problems of the form
\begin{equation}\label{eq:P}
  \min_{x \in \mathcal{X}} f(x), \tag{$\mathcal{P}$}
\end{equation}
where $\mathcal{X}\subseteq \mathbb{R}^d$ is a compact convex set and $f$ is a convex and smooth function \citep{frank1956algorithm,levitin1966constrained}.

Unlike projected gradient methods, which require projecting the iterates onto $\mathcal{X}$, \gls{fw} replaces projections with a single \gls{lmo} call per iteration. This property makes \gls{fw} attractive in large-scale settings where projections are computationally costly. Applications include matrix completion \citep{garber16faster}, optimal transport \citep{luise19sinkhorn}, optimal experiment design \citep{hendrych24optimal}, and pruning large language models \citep{roux25pruning}, among others.

For general convex and smooth objectives, \gls{fw} requires at least $\Omega (1/\varepsilon)$ gradient and \gls{lmo} calls to achieve an $\varepsilon$-optimal solution \citep{jaggi13revisiting,lan2013complexity}. However, when the constraint set $\mathcal{X}$ is strongly convex, faster rates are possible.

\begin{figure}[ht]
  \centering
  \includegraphics[width=\columnwidth]{"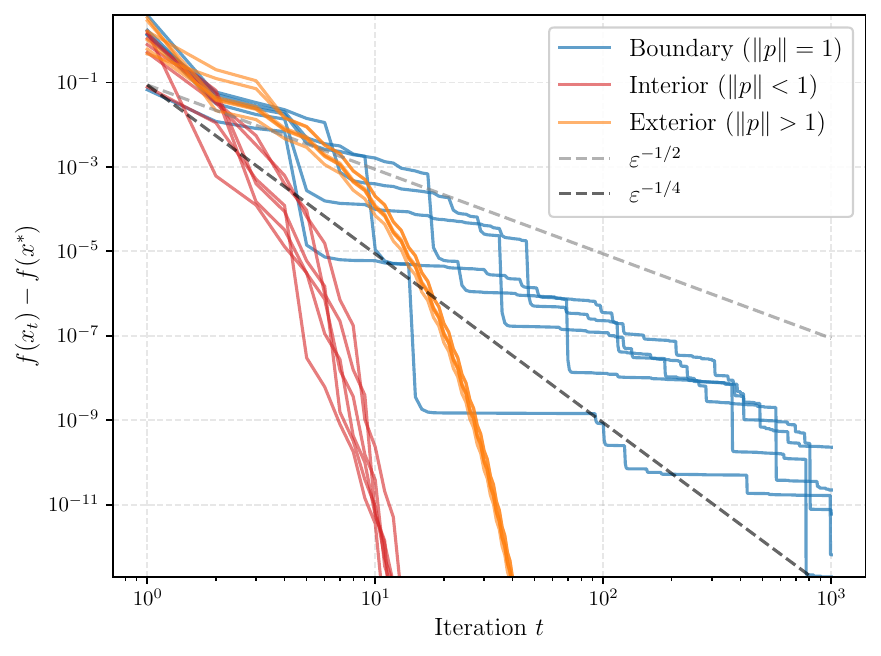"}
  \caption{Log–log plot of the error versus iteration for \gls{fw} with exact line search on the function $f(x) = \norm{x - p}_2^2$ over the Euclidean unit ball for different positions of the optimizer $p$ with multiple initializations.}
  \label{fig:ball-rates}
\end{figure}

In particular, it has long been known that \gls{fw} converges linearly under additional assumptions on the position of the optimizer, specifically, when the optimizer lies in the interior of $\mathcal{X}$ and $f$ is strongly convex, or when $\norm{\nabla f(x)}$ is bounded away from zero on $\mathcal{X}$ (equivalently, the unconstrained optimizer lies outside $\mathcal{X}$) \citep{wolfe1970convergence,levitin1966constrained}. What remained unclear was whether strong convexity of $\mathcal{X}$ can yield \emph{uniformly} faster rates, without further assumptions on the optimizer location.
\citet{garber15faster} showed that when both $f$ and $\mathcal{X}$ are strongly convex, \gls{fw} achieves the uniform rate $\mathcal{O}(1/\sqrt{\varepsilon})$. It is still unclear whether a uniformly faster rate can be achieved in this regime. \Cref{fig:ball-rates} depicts the empirical convergence on a simple problem. While the instances with optimizer on the boundary converge markedly more slowly than with optimizer in the interior, the observed trajectories appear faster than the worst-case $\mathcal{O}(1/\sqrt{\varepsilon})$ scaling. This motivates the question of whether rates faster than $\mathcal{O}(1/\sqrt{\varepsilon})$ are achievable, or the bound of \citet{garber15faster} is tight.

To derive lower bounds, we study the convergence of \gls{fw} on a simple example: minimizing the quadratic function $f(x) = \norm{x - p}_2^2$ over the Euclidean unit ball, where $p$ lies on the boundary of the ball. On this instance, \cref{fig:ball-rates} reveals a characteristic behavior: long phases of slow convergence punctuated by intermittent, steep drops in the error. We exploit this structure via a tailored numerical procedure to construct worst-case \gls{fw} trajectories. Our approach is highly efficient: we can readily compute trajectories with \num{10000} iterations or more. Furthermore, it departs significantly from general-purpose worst-case search methods such as the \gls{pep} framework: instead of solving a global worst-case program, it leverages problem-specific dynamical structure to synthesize long, high-precision worst-case sequences. We expect this computational approach to be of independent interest for discovering and certifying lower bounds in other settings where long-horizon behavior is decisive.

Based on our numerical results, we show that when both the function and the set are smooth and strongly convex, \gls{fw} with exact line search or short steps requires at least $\Omega (1/\sqrt{\varepsilon})$ iterations.
Notably, the proof of this lower bound follows exactly the same reasoning we use to construct the worst-case sequences. This result resolves a long-standing open question regarding the performance of \gls{fw} on strongly convex sets, showing that the \emph{uniform} convergence rate of $\mathcal{O}(1/\sqrt{\varepsilon})$ is tight with respect to the precision $\varepsilon$.
Moreover, the lower bound is not specific to the particular construction of the Euclidean ball, but can be extended to any ellipsoid.
Our result has two interesting implications:
First, it establishes that the position of the optimizer genuinely matters for the convergence rate of \gls{fw} and is not an artifact of prior analyses.
Second, since our lower bound applies to smooth sets, it rules out the possibility of a speedup from the smoothness of the set for strongly convex functions.

\paragraph{Contributions}
\begin{enumerate}
  \item We provide a detailed analysis of the behavior of \gls{fw} on our model instance, highlighting the key dynamical phenomena that govern its progress and underpin the observed worst-case trajectories.
  \item We develop a numerical procedure based on a forward-backward trajectory construction that computes high-precision worst-case \gls{fw} sequences over substantially longer horizons than approaches based on \gls{pep}, empirically revealing the $\mathcal{O}(1/\sqrt{\varepsilon})$ scaling. Our numerical procedure might be of independent interest in other settings where high-precision worst-case sequences are needed.
  \item We provide an analytical lower bound of $\Omega(1/\sqrt{\varepsilon})$ for \gls{fw} with exact line search and short steps on smooth and strongly convex sets and functions, showing that the known $\mathcal{O}(1/\sqrt{\varepsilon})$ rates in this setting are tight in their dependence on $\varepsilon$.
\end{enumerate}

\paragraph{Related Works}
\citet{wolfe1970convergence} showed that \gls{fw} with exact line search converges linearly if the optimizer lies in the interior of $\mathcal{X}$ and the function $f$ is strongly convex without assuming any conditions on the constraint set other than convexity and compactness.
\citet{levitin1966constrained} showed that for strongly convex sets, \gls{fw} with exact line search or short steps converges linearly if the unconstrained optimizer is bounded away from $\mathcal{X}$.
\citet{garber15faster} showed that \gls{fw} with exact line search or short steps converges in $\mathcal{O}(1/\sqrt{\varepsilon})$ if both the function and the constraint set are strongly convex. \citet{wirth23acceleration} extended this result to open-loop step sizes and \citet{kerdreux21affine} provided an affine invariant analysis. This result was generalized to uniformly convex sets by \citet{kerdreux2021projection}, interpolating between the $\mathcal{O}(1/\sqrt{\varepsilon})$ and $\mathcal{O}(1/\varepsilon)$ rates. There are also other structural assumptions that can be used to improve the convergence rate of \gls{fw}, for instance polytopal constraints. A wide variety of \gls{fw} variants, such as the away-step, pairwise, and fully-corrective methods, have been analyzed to achieve improved convergence rates; see \citet{braun2025conditional} for an overview.

\citet{canon1968tight} showed an asymptotic lower bound of $\Omega (\varepsilon^{-1/(1+c)})$ for any $c>0$ for \gls{fw} with exact line search. Then \citet{jaggi13revisiting} showed a non-asymptotic lower bound of $\Omega (1/\varepsilon)$ for any feasible step size in the high-dimensional case where $t<d$. Later \citet{lan2013complexity} extended this lower bound to hold for a family of \gls{lmo}-based first-order methods. Note, however, that none of these lower bounds apply to the case of strongly convex sets. In fact, except for a lower bound that uses a spectrahedron as constraint set by \citet{jaggi13revisiting}, all of the lower bounds are built on examples with polytopal constraints. The only existing results relevant to our setting are the general lower bounds for first-order methods, which state that the iteration complexity is at least $\Omega( 1/\sqrt{\varepsilon})$ and $\Omega( \log(1/\varepsilon))$ for convex functions and strongly convex functions, respectively \citep{nemirovsky1985problem}.
Since the $\mathcal{O}(1/\sqrt{\varepsilon})$ upper bound by \citet{garber15faster} requires strong convexity of the function, this left a significant gap between the best known upper and lower bounds for \gls{fw} on strongly convex sets.

After the original publication of this work, \citet{grimmer2026uniform} established a lower bound of $\Omega(1/\sqrt{\varepsilon})$ for \gls{lmo}-based methods over strongly convex sets via a zero-chain construction.
Their result is complementary to ours: it covers a broader algorithm class, including \gls{fw} with exact line search and short steps, but is restricted to the high-dimensional regime $T < d$. Moreover, its extension to smooth sets applies only to a reduced algorithm class and only to \emph{modestly} smooth sets, i.e., where the smoothness constant depends on the precision with $L = \Theta(1/\sqrt{\varepsilon})$.
In contrast, our result is established based on a smooth and strongly convex instance for any dimension $d \ge 2$, at the cost of being tailored to a narrower algorithmic setting.

The \gls{pep} framework enables the systematic construction and search of worst-case instances in a given algorithm and problem class, which can then be used to inspire lower bounds \citep{taylor2017smooth,dori2014performance}. Recently, \citet{luner2024performance} extended this framework to strongly convex and smooth constraint sets and leveraged these results to build \glspl{pep} that incorporate such geometric assumptions. They used this approach to obtain \emph{empirical} lower bound estimates for the performance of \gls{fw} for strongly convex and smooth sets. However, due to its generality, \gls{pep} has a large search space, including the objective function, the (strongly convex and smooth) constraint set geometry, as well as the initialization. The resulting semidefinite programs grow rapidly with the horizon, which typically limits computations to moderate horizons and precisions.
In our setting, the numerical estimates reported in \citet{luner2024performance} suggest a scaling around $\mathcal{O}(1/\varepsilon^{1/1.44})$ at the horizons they can access.
By exploiting the structure of our specific instance, we develop a tailored computational procedure that produces high-precision worst-case sequences over substantially longer horizons, and thereby reveals the $\mathcal{O}(1/\sqrt{\varepsilon})$ scaling, see \cref{fig:ball_numerics_worst}.
We find that realizing these worst-case trajectories requires extremely high numerical precision (e.g., constructing a worst-case trajectory over \num{1000} iterations requires about $10^{-300}$ accuracy), which general-purpose \gls{pep} formulations based on large semidefinite programs cannot reliably reach at the horizons needed to reveal the true worst-case rate.
Moreover, we turn this high-precision numerical construction into a matching analytical $\Omega(1/\sqrt{\varepsilon})$ lower bound for \gls{fw} with exact line search and short steps.

\begin{table}[ht]
\centering
\caption{Overview of \gls{fw} convergence rates. The columns $f$ and $\mathcal{X}$ indicate the assumption of the function and constraint set, respectively, where C means convex and SC to strongly convex. The column $\gamma$ indicates the step-size rule: LS refers to exact line search, ST refers to short steps and OL to open-loop step sizes. The column ``rate'' indicates the convergence rates.}
\resizebox{\columnwidth}{!}{
  \begin{tabular}{
    @{}l@{\hspace{4pt}}
    c@{\hspace{2pt}}
    c@{\hspace{4pt}}
    c@{\hspace{8pt}}
    c@{}
  }
  \toprule
  & $f$ & $\mathcal{X}$ & $\gamma$ & rate \\ \midrule
  \citet{levitin1966constrained} & C\tablefootnote{$\norm{\nabla f(x)} \ge c >0$ for all $x \in \mathcal{X}$} & SC & ST/LS & $\mathcal{O}( \log(\frac{1}{\varepsilon}))$ \\[0.2em]
  \citet{wolfe1970convergence} & SC\tablefootnote{$x^* \in \Int (\mathcal{X})$} &C & LS&$\mathcal{O}( \log(\frac{1}{\varepsilon}))$ \\[0.2em] 
  \midrule
  \citet{garber15faster} & SC & SC & ST/LS & $\mathcal{O}(\frac{1}{\sqrt{\varepsilon}})$ \\[0.3em]
  \citet{wirth23acceleration} & SC & SC & OL & $\mathcal{O}(\frac{1}{\sqrt{\varepsilon}})$ \\[0.3em]
  e.g. \citet{jaggi13revisiting} & C &C & OL/LS&$\mathcal{O}(\frac{1}{\varepsilon})$ \\
  \midrule
  \citet{nemirovsky1985problem} & SC&C & -&$\Omega( \log(\frac{1}{\varepsilon}))$ \\[0.3em]
  \citet{nemirovsky1985problem} & C &C & -&$\Omega( \frac{1}{\sqrt{\varepsilon}})$ \\[0.3em]
  \textbf{\cref{thm:lower_bound_shortstepFW} (Ours)} & SC&SC & ST/LS&$\Omega (\frac{1}{\sqrt{\varepsilon}})$ \\ 
  \bottomrule
  \end{tabular}
}
\end{table}

\paragraph{Preliminaries}
Throughout this paper, we use $\norm{\cdot}$ to denote the Euclidean norm and let $x^* \in \argmin_{x \in \mathcal{X}} f(x)$ denote an optimizer of \eqref{eq:P}.
We denote by $\partial \mathcal{X}$ the boundary of $\mathcal{X}$ and by $\Int (\mathcal{X})$ the interior of $\mathcal{X}$.
A function $f: \R^d \to \R$ is \emph{$\mu$-strongly convex} if 
\begin{equation*}
f(x) \ge  f(y) + \innp{\nabla f(y), x - y} + \frac{\mu}{2} \norm{x - y}^2
\end{equation*}
for all $x, y \in \R^d$, and is \emph{$L$-smooth} if
\begin{equation*}
f(x) \le f(y) + \innp{\nabla f(y), x - y} + \frac{L}{2} \norm{x - y}^2.
\end{equation*}
Let $B_r(x) \defi \{y \in \R^d \mid \norm{y - x} \le r\}$ be the ball of radius $r$ centered at $x$.
A closed convex set $\mathcal{X}\subset\mathbb R^d$ is $\alpha$-strongly convex if there exists a set $\mathcal{C} \subset \mathcal{X}$ such that $\mathcal{X} = \bigcap_{c \in \mathcal{C}} B_{\alpha}(c)$.
Equivalently, the set $\mathcal{X}$ is strongly convex if, for every point $x \in \partial \mathcal{X}$ and every normal vector $n \in N_{\mathcal{X}}(x) \defi \{n\mid \innp{n,y-x} \leq 0 \;\forall y\in \mathcal{X} \}$ with $\|n\| = 1$, one has $\mathcal{X} \subset B_{\alpha}(x-\alpha n)$ \citep{Goncharov2017}.
Analogously, a closed convex set $\mathcal{X}$ is $\beta$-smooth if, for every $x\in\partial\mathcal X$ and every $n \in N_{\mathcal{X}}(x)$ with $\|n\| = 1$, one has $B_{\beta}(x-\beta n) \subset \mathcal{X}$.
All proofs are provided in the appendix in the corresponding section and linked via~($\downarrow$).

\section{Model Problem Setup} \label{sec:model-problem-setup}

We consider the problem of projecting onto the Euclidean-ball where the projection point $p$ lies on the boundary,
\begin{equation}\label{eq:Pball}
  \min_{x \textrm{ s.t. } x\in B_1(0)} \left\{f(x)\defi \norm{x - p}_2^2\right\}, \quad \norm{p} = 1. \tag{$\mathcal{P}_B$}
\end{equation}
Both the objective and the constraint set are strongly convex and smooth, with matching parameters $\mu = L = 2$ and $\alpha = \beta = 1$.
While this problem is well-conditioned and admits a closed form solution, it turns out to be fundamentally challenging for \gls{fw}.

We briefly recall the \gls{fw} update rule:
\begin{equation}\label{eq:fw}
  x_0\in \mathcal{X}, \forall t \in \mathbb{N}\;
  \begin{cases}
    \displaystyle v_t \in \arg\min_{v \in \mathcal{X}} \innp{\nabla f(x_t), v},\\
    x_{t+1} = (1-\gamma_t) x_t + \gamma_t v_t,
  \end{cases}
\end{equation}
for some step size $\gamma_t \in [0,1]$.
We begin by showing some basic properties of the \gls{fw} iterates. First, note that the \gls{lmo} has a closed-form solution,
\begin{equation}
    \label{eq:ball_lmo}
    v_t = -\frac{x_t - p}{\norm{x_t - p}} = - \frac{\nabla f(x_t)}{\norm{\nabla f(x_t)}}
\end{equation}
for $x_t \neq p$.
Consequently, the extreme point $v_t$ and thus also the next iterate $x_{t+1}$ lie in the plane spanned by $p$ and $x_t$.
By induction, the iterates remain in a two-dimensional invariant subspace defined by $p$ and the initial iterate $x_0$.
\begin{restatable}[Invariant Subspace]{lemma}{invariantsubspace}\label{lem:invariant_subspace}\linktoproof{lem:invariant_subspace}
    The iterates $\{x_t\}_{t=0}^\infty$ of \gls{fw} applied to \eqref{eq:Pball} satisfy $x_t \in \Span\{p,x_0\}$ for all $t \in \mathbb{N}$.
\end{restatable}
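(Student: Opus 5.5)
We argue by induction on $t$, with the subspace $S \defi \Span\{p, x_0\}$ fixed once and for all. The base case $x_0 \in S$ is immediate.

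For the inductive step, assume $x_t \in S$. We split into two cases.

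\emph{Case $x_t = p$.} The iterate is already optimal: $\nabla f(x_t) = 0$, and the \gls{lmo} may return any $v_t \in B_1(0)$. For the statement to make sense, we take the natural convention that the method stops, or equivalently that $\gamma_t = 0$. Then $x_{t+1} = x_t = p \in S$. This also covers exact line search and short steps, since both give $\gamma_t = 0$ when the gradient vanishes.

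\emph{Case $x_t \neq p$.} The closed form \eqref{eq:ball_lmo} gives
\[
v_t = -\frac{x_t - p}{\norm{x_t - p}} .
\]
This is a scalar multiple of $x_t - p$. Since $x_t \in S$ and $p \in S$, we have $x_t - p \in S$, and therefore $v_t \in S$. The \gls{lmo} solution is unique here: the ball is strictly convex and $\nabla f(x_t) = 2(x_t - p) \neq 0$. Hence the tie-breaking choice of the oracle plays no role.

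The update
\[
x_{t+1} = (1-\gamma_t)x_t + \gamma_t v_t
\]
is a linear combination of two vectors in $S$, so $x_{t+1} \in S$ for every $\gamma_t \in [0,1]$. In particular, the argument does not depend on the step-size rule.

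The only point requiring care is the degenerate case $x_t = p$, where the \gls{lmo} is not uniquely defined. It is handled by the stopping convention above, so I expect no real obstacle. The remainder of the proof is pure linear-algebraic closure of $S$ under the update.
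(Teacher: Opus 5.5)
Your proof is correct and follows the paper's argument essentially verbatim. Both argue by induction on $t$, use that the \gls{lmo} output $v_t = -(x_t-p)/\norm{x_t-p}$ is a multiple of $x_t - p \in \Span\{p,x_0\}$, and treat $x_t = p$ as termination. Your remark that the \gls{lmo} solution is unique when $x_t \neq p$ is a small detail the paper leaves implicit.
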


Since the iterates remain in the two-dimensional subspace $\Span\{p,x_0\}$, it is convenient to describe them using polar coordinates centered at $p$ within this plane.
We define the residual magnitude $r_t$ and the cosine of the angle $\theta_t$ as
\begin{equation}
    \label{eq:polar_def}
    r_t \defi \norm{x_t - p},
    \qquad
    \theta_t \defi \frac{\innp{ x_t - p, p}}{\norm{x_t - p}} \in [-1,0],
\end{equation}
where we assume $x_t \neq p$. Generally, if $x_t$ and $p$ are collinear, the problem is one-dimensional and \gls{fw} can terminate in a single iteration.

The pair $(r_t,\theta_t)$ fully characterizes $x_t$ within this plane.
\begin{align}
  r_{t+1}^2
    &= \big((1-\gamma_t)r_t - \gamma_t\big)^2 \notag\\
    &\quad- 2\gamma_t\big((1-\gamma_t)r_t - \gamma_t\big)\theta_t
    + \gamma_t^2,
    \label{eq:polar_rt}\\
    \theta_{t+1}
    &= \frac{\big((1-\gamma_t)r_t - \gamma_t\big)\theta_t - \gamma_t}{r_{t+1}},\label{eq:polar_thetat}
\end{align}

The recurrences \labelcref{eq:polar_rt,eq:polar_thetat} hold for any step size $\gamma_t \in [0,1]$. 
If the step sizes are allowed to be chosen arbitrarily, the algorithm can be made to terminate in at most two iterations from any starting point $x_0 \in B_1(0)$:

\begin{restatable}[Two-step termination]{proposition}{twosteptermination}\label{prop:ball_twostep}\linktoproof{prop:ball_twostep}
    There exist step sizes $\gamma_0, \gamma_1 \in [0,1]$ such that the iterates of \gls{fw} applied to \eqref{eq:Pball} satisfy $x_2 = p$.
\end{restatable}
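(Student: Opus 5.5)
The plan is to reduce the statement to a single geometric observation: if $x_1$ lies on the line $\mathbb{R}p$ with $x_1 \neq p$, then the \gls{lmo} returns $v_1 = p$, and the step $\gamma_1 = 1$ gives $x_2 = p$. So it suffices to choose $\gamma_0$ so that the first \gls{fw} segment crosses the diameter through $p$.

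First I would dispose of the degenerate cases.
\begin{itemize}
  \item If $x_0 = p$, any step sizes work.
  \item If $x_0 \neq p$ and $x_0$ is collinear with $p$, then $x_0 = cp$ with $c \in [-1,1)$. By \eqref{eq:ball_lmo}, $v_0 = (p - x_0)/\norm{p - x_0} = p$, so $\gamma_0 = 1$ gives $x_1 = p$, and then $x_2 = p$ for any $\gamma_1$ (say $\gamma_1 = 0$).
\end{itemize}

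The key step is to justify the reduction. Suppose $x_1 = cp$ with $c \in [-1,1)$. Then by \eqref{eq:ball_lmo},
\[
v_1 = \frac{p - x_1}{\norm{p - x_1}} = \frac{(1-c)p}{1-c} = p,
\]
so $\gamma_1 = 1$ yields $x_2 = v_1 = p$. This is exactly the mechanism from the collinear case, applied one step later. For comparison, the general condition for $p \in [x_1, v_1]$ forces $x_1$ to be parallel to $v_1$. This suggests that landing on the diameter through $p$ is essentially the only route to two-step termination.

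For the generic case, decompose $x_0 = a p + x_0^\perp$ with $x_0^\perp \perp p$ and $x_0^\perp \neq 0$, and set $r_0 = \norm{x_0 - p} > 0$. By \eqref{eq:ball_lmo}, $v_0 = (p - x_0)/r_0$, so its component orthogonal to $p$ is $-x_0^\perp / r_0$. The orthogonal component of $x_1 = (1-\gamma_0)x_0 + \gamma_0 v_0$ is therefore
\[
\bigl((1-\gamma_0) - \gamma_0/r_0\bigr)\, x_0^\perp ,
\]
which is affine in $\gamma_0$. It equals $x_0^\perp$ at $\gamma_0 = 0$ and $-x_0^\perp/r_0$ at $\gamma_0 = 1$, so it vanishes at
\[
\gamma_0 = \frac{r_0}{1 + r_0} \in (0,1).
\]
With this choice, $x_1 = cp$ for some scalar $c$, and $\abs{c} \le 1$ because $x_1$ is a convex combination of points of $B_1(0)$. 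If $c = 1$, then already $x_1 = p$ and we are done. Otherwise $c \in [-1,1)$ and the reduction above applies with $\gamma_1 = 1$.

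I expect no real obstacle. The only care needed is the bookkeeping of the edge cases ($x_0 = p$, $x_0$ collinear with $p$, $x_1 = p$), where the \gls{lmo} formula \eqref{eq:ball_lmo} is undefined or degenerate. I also need to confirm that the chosen $\gamma_0$ lies in $[0,1]$, which is immediate from the formula. Everything takes place in $\Span\{p, x_0\}$, consistent with \cref{lem:invariant_subspace}.
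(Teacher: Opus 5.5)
Your proposal is correct and follows essentially the same route as the paper. Both use $\gamma_0 = r_0/(1+r_0)$ to put $x_1$ on the line through $p$, and then $\gamma_1 = 1$ to reach $p$. The paper computes directly that $x_1 = p/(1+r_0)$, which shows your separate collinear case and the branch $c=1$ are unnecessary.

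The remaining differences are cosmetic. You find $\gamma_0$ by making the component orthogonal to $p$ vanish and check the second step through the LMO ($v_1 = p$). The paper instead checks it through the polar recurrence with $\theta_1 = -1$.
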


Consequently, any meaningful lower-bound statement for problem \eqref{eq:Pball} must assume a specific step-size rule.
Classical results by \citet{wolfe1970convergence} and \citet{levitin1966constrained}, which establish linear rates when the unconstrained optimizer lies inside or outside the feasible set, require the step size to be either
the \emph{short step}
$\gamma_t =\min\left\{1, \frac{\innp{\nabla f(x_t),x_t-v_t}}{L\norm{x_t-v_t}^2}\right\}$
or the \emph{exact line search} 
$\gamma_t = \argmin_{\gamma \in [0,1]} f(x_t + \gamma (v_t - x_t)).$

Thus, to define a \gls{fw} algorithm achieving the best possible convergence rate for any setting, we fix the step-size rule to be one of these two. Since the objective is an isotropic quadratic function $f(x)=\norm{x-p}^2 = r^2$, the exact line search along the \gls{fw} direction coincides with the short-step rule.
Minimizing $r_{t+1}^2$ as a quadratic in $\gamma$ yields
\begin{equation*}
    \gamma_t^\star = \frac{r_t(1+r_t+\theta_t)}{(1+r_t)^2 + 1 + 2(1+r_t)\theta_t}
    \in(0,1],
\end{equation*}
with equality $\gamma_t^\star=1$ if and only if $x_t$ and $p$ are collinear, i.e., $\theta_t=-1$.


From this we can directly derive an upper bound on the convergence rate of the residual matching the known rate of $\mathcal{O}(1/\sqrt{\varepsilon})$ from \citet{garber15faster}.
\begin{restatable}{proposition}{ballrate}\label{prop:ball_rate}\linktoproof{prop:ball_rate}
    For all $t= 0, \dots,T$ with $r_t>0$,
    \begin{equation}
        \label{eq:ball_reciprocal}
        f(x_t)-f(x^*)=r_t^2 \le \frac{1}{(t + 1/r_0)^2}
    \end{equation}
    or equivalently $T = \mathcal{O}(1/\sqrt{\varepsilon})$.
\end{restatable}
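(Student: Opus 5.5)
The plan is to show the one-step contraction $r_{t+1} \le r_t/(1+r_t)$, i.e.\ $1/r_{t+1} \ge 1/r_t + 1$, whenever $r_t>0$. Telescoping this gives $1/r_t \ge 1/r_0 + t$, hence $r_t^2 \le 1/(t+1/r_0)^2$. Since $f(x^*)=0$ and $f(x_t)=r_t^2$, this is exactly \eqref{eq:ball_reciprocal}. Requiring $r_T^2\le\varepsilon$ then gives $T = \mathcal{O}(1/\sqrt{\varepsilon})$. If some $r_t=0$ the statement is vacuous from then on, and the collinear case $\theta_t=-1$ gives $r_{t+1}=0$ directly, so we may assume $\theta_t\in(-1,0]$.

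To get the contraction I compute the exact value of $r_{t+1}^2$ under the exact line search, which coincides with the short step. Set $a = x_t - p$, so $\norm{a}=r_t$ and $\innp{a,p}=r_t\theta_t$. By \eqref{eq:ball_lmo} we have $v_t = -a/r_t$, so the step direction is $d \defi v_t - x_t = -(1+1/r_t)a - p$. Then $r_{t+1}^2 = \norm{a+\gamma d}^2$ is a quadratic in $\gamma$. The text already states that its minimizer $\gamma_t^\star$ lies in $(0,1]$, so the box constraint is inactive and the minimum value is
\[
  r_{t+1}^2 = r_t^2 - \frac{\innp{a,d}^2}{\norm{d}^2}.
\]
A direct computation gives
\[
  \innp{a,d} = -r_t(1+r_t+\theta_t), \qquad
  \norm{d}^2 = (1+r_t)^2 + 1 + 2(1+r_t)\theta_t \eqqcolon D_t,
\]
consistent with the formula for $\gamma_t^\star$. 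Expanding $D_t - (1+r_t+\theta_t)^2$, the terms $(1+r_t)^2$ and $2(1+r_t)\theta_t$ cancel and only $1-\theta_t^2$ remains. This yields the closed form
\[
  r_{t+1}^2 = \frac{r_t^2\,(1-\theta_t^2)}{D_t}.
\]

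It remains to show $(1-\theta_t^2)/D_t \le 1/(1+r_t)^2$. Since $D_t>0$, this is equivalent to $(1+r_t)^2(1-\theta_t^2) \le (1+r_t)^2 + 1 + 2(1+r_t)\theta_t$. Rearranging, this becomes
\[
  0 \le \big((1+r_t)\theta_t + 1\big)^2,
\]
which always holds. Taking square roots gives $r_{t+1}\le r_t/(1+r_t)$, and the telescoping above finishes the proof.

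The only step needing care is the algebraic simplification of the line-search value to $r_t^2(1-\theta_t^2)/D_t$. This includes checking $D_t>0$, which holds because $D_t = \norm{d}^2$ and $d\neq 0$ when $\theta_t>-1$. It also includes relying on $\gamma_t^\star\le 1$ so that the unconstrained minimum is attained. Once that closed form is in hand, the contraction reduces to a perfect square, so I expect no real obstacle.
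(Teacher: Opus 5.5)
Your proof is correct. Its skeleton matches the paper's: the one-step bound $r_{t+1}\le r_t/(1+r_t)$, followed by telescoping $1/r_{t+1}\ge 1/r_t+1$. The only difference is how you get the one-step bound.

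The paper first derives the identity $\theta_{t+1} = -\frac{1+r_t}{r_t}\,r_{t+1}$ by computing $r_{t+1}\theta_{t+1}$ under the line-search step. It then just invokes $|\theta_{t+1}|\le 1$, which is Cauchy--Schwarz for the next iterate's angle. You instead stop at the closed form $r_{t+1}^2 = r_t^2(1-\theta_t^2)/D_t$, which the paper also derives. You then bound it directly through the perfect square $\bigl((1+r_t)\theta_t+1\bigr)^2\ge 0$.

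The two arguments prove the same inequality. Substituting the paper's identity gives $D_t\bigl(1-\theta_{t+1}^2\bigr) = \bigl((1+r_t)\theta_t+1\bigr)^2$, so your perfect square is exactly $D_t$ times the slack in $|\theta_{t+1}|\le 1$.

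Your route is more self-contained, since it needs only the $r_{t+1}^2$ formula and no second identity. The paper's route reuses an identity it needs anyway for the forward-dynamics recursion. It also makes the equality case geometric: the bound is tight exactly when $|\theta_{t+1}|=1$, i.e.\ when $x_{t+1}$ is collinear with $p$. This is the same as your condition $(1+r_t)\theta_t=-1$.

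A minor simplification: you do not need the case split on $\theta_t=-1$ to ensure $D_t>0$. Since $D_t=(1+r_t+\theta_t)^2+1-\theta_t^2\ge r_t^2>0$ for all $\theta_t\in[-1,0]$, your closed form and bound hold uniformly, including the collinear case.
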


\section{Finding Worst-Case Trajectories}\label{sec:numerics}
Despite the worst-case bound established in \cref{prop:ball_rate}, it remains unclear whether this rate is optimal for this problem or merely an artifact of the analysis.
In particular, it is unknown whether one can establish faster convergence guarantees $\mathcal{O}(1/\varepsilon^a)$ where $a < \frac{1}{2}$.

\begin{figure}[t]
  \centering
  \includegraphics[width=\columnwidth]{"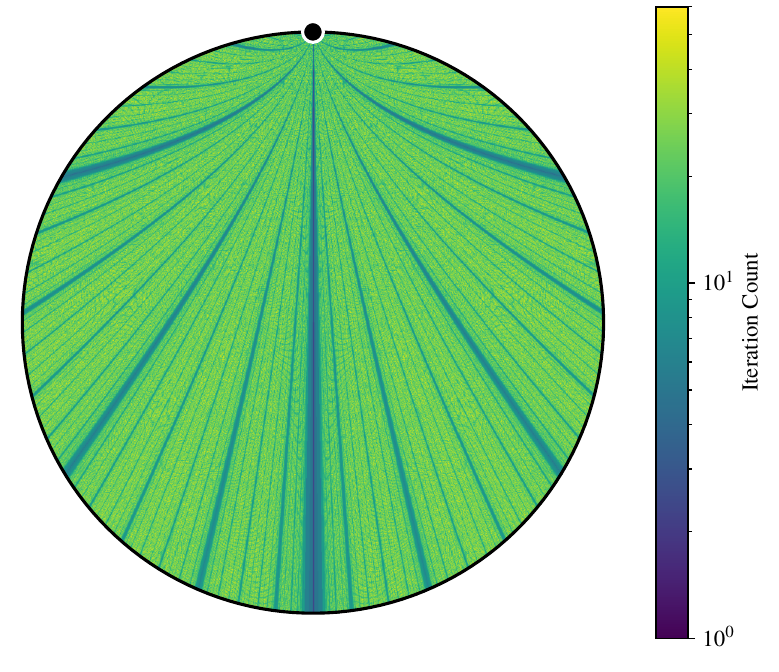"}
  \caption{Convergence landscape of \gls{fw} for \labelcref{eq:Pball} with target at $p=(0,1)$ denoted by black marker. Colors indicate the number of iterations to reach $10^{-4}$ accuracy from each point in the Euclidean unit ball.}
  \label{fig:heatmap}
\end{figure}

To gain further insight into the behavior of \gls{fw} for this specific problem, we turn to an empirical study of the iterates and simulate the \gls{fw} dynamics for different initializations $(r_0,\theta_0)$.
\Cref{fig:heatmap} shows that the convergence behavior for this specific problem is very sensitive to the choice of start point. Notably, there is no clear correlation between distance to the optimum and convergence speed.
In \cref{fig:ball_numerics}, we compare the convergence of the primal gap $f(x_t) - f(x^*)$ for a range of initializations.
Across all tested initial conditions, we observe that the empirical rates are closer to $\mathcal{O}(\varepsilon^{-1/4})$ than to $\mathcal{O}(\varepsilon^{-1/2})$, suggesting that the guarantee given in \cref{prop:ball_rate} might not be tight.

All initializations demonstrate a persistent characteristic pattern consisting of extended plateaus corresponding to very slow progress followed by abrupt decreases in the primal gap.
The iterates spend long periods in regimes with deteriorating contraction rates before undergoing sudden transitions, or \emph{jumps}, to significantly smaller error levels. 


\begin{figure}[t]
  \centering  \includegraphics[width=\columnwidth]{"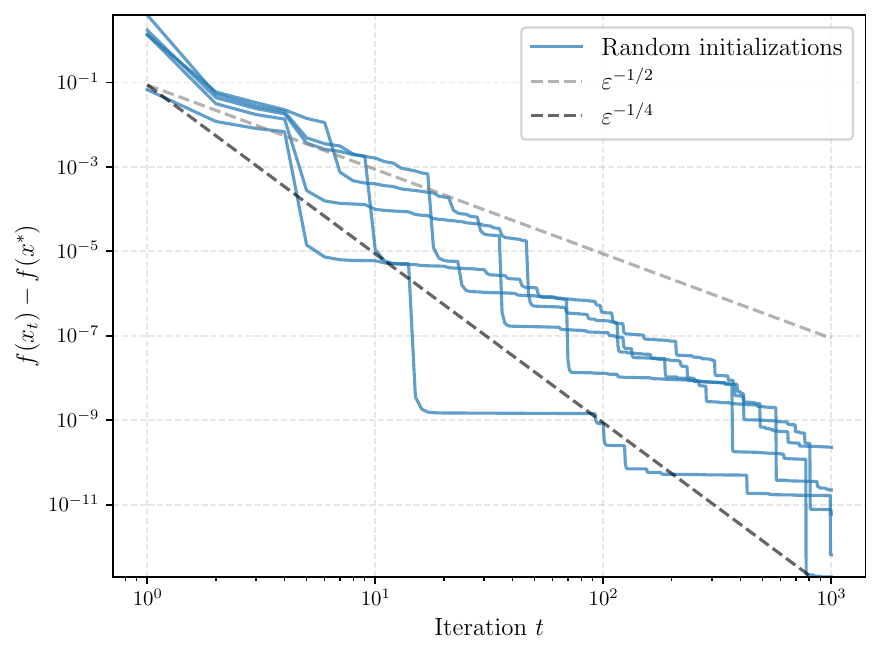"}
  \caption{Log-log plot of the error versus iteration for \acrlong{fw} with exact line search on the Euclidean unit ball, shown for multiple initializations.}
  \label{fig:ball_numerics}
\end{figure}

Based on this observed behavior, fast convergence is driven by intermittent sudden jumps.
Thus, to understand whether a guaranteed faster rate is possible, we investigate the frequency of these jumps.
To that end, we derive an alternative parametrization of the iterates, namely $(r_t, s_t)$, where $s_t$ is the \emph{contraction factor} $s_t \defi r_{t+1}/r_t$.
\begin{restatable}{proposition}{forwarddynamics}\linktoproof{prop:forward dynamics}
    \label{prop:forward dynamics}
    For any iteration $t$, the iterates $x_t$ of the \gls{fw} algorithm with exact line search applied to problem \eqref{eq:Pball} satisfy
    \begin{align}
        s_{t+1}^2 &= \frac{r_{t+2}^2}{r_{t+1}^2} = \frac{1 - (1+r_t)^2s_t^2}{2 - 2s_t - (2+r_t)r_t s_t^2},\label{eq:forward-dynamics}\\
        r_{t+1} &= s_tr_t \notag.
    \end{align}
    where $s_t = r_{t+1}/r_t$ is the contraction factor and $r_t = \|x_t - p\|$ is the residual.
\end{restatable}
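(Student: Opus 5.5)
The plan is to work entirely in the two-dimensional invariant plane of \cref{lem:invariant_subspace}. First we get a closed form for the one-step contraction $s_t$ in terms of $(r_t,\theta_t)$. Then we get a closed form for the next angle $\theta_{t+1}$. Finally we eliminate the angles so that only $(r_t,s_t)$ remain.

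\emph{Step 1: the contraction factor as a function of $(r_t,\theta_t)$.} Write $u_t \defi (x_t-p)/r_t$, a unit vector with $\innp{u_t,p}=\theta_t$. By \eqref{eq:ball_lmo}, $v_t=-u_t$, so the search direction is
\[
x_t-v_t = p+(1+r_t)u_t .
\]
Since $f=\norm{\cdot-p}^2$ is isotropic, exact line search returns the orthogonal projection of $p$ onto the line through $x_t$ and $v_t$. This uses that $\gamma^\star_t\in(0,1]$, as recalled above, so the constraint $\gamma\le 1$ is inactive. Put
\[
D_t \defi \norm{x_t-v_t}^2 = (1+r_t)^2+1+2(1+r_t)\theta_t .
\]
Then $r_{t+1}^2 = r_t^2 - \innp{x_t-p,x_t-v_t}^2/D_t$ and $\innp{x_t-p,x_t-v_t}=r_t(1+r_t+\theta_t)$. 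Hence
\[
s_t^2 = 1-\frac{(1+r_t+\theta_t)^2}{D_t}.
\]
Using the identity $D_t=(1+r_t+\theta_t)^2+1-\theta_t^2$, this becomes
\[
s_t^2\,(1+r_t+\theta_t)^2=(1-s_t^2)(1-\theta_t^2).
\]
This is the relation that will let us eliminate the angle. The same formula applied at $t+1$ gives $s_{t+1}^2=1-(1+r_{t+1}+\theta_{t+1})^2/D_{t+1}$ with $r_{t+1}=s_tr_t$. So it remains to express $\theta_{t+1}$ through $(r_t,s_t)$.

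\emph{Step 2: the next angle.} Optimality of the line search means that $u_{t+1}$ is orthogonal to $p+(1+r_t)u_t$. In the plane this determines $u_{t+1}$ up to sign, and the sign is fixed by $\theta_{t+1}\le 0$. As a check, this agrees with \eqref{eq:polar_thetat} evaluated at $\gamma_t^\star$. A convenient encoding is to write $p$ and $u_t$ in an orthonormal basis of the plane, with $\innp{u_t,p}=\theta_t$ and the orthogonal component $\sqrt{1-\theta_t^2}$. This gives
\[
\theta_{t+1}=-\frac{(1+r_t)\sqrt{1-\theta_t^2}}{\sqrt{D_t}}
\quad\text{and}\quad
1-\theta_{t+1}^2=\frac{(1+r_t+\theta_t)^2}{D_t}=1-s_t^2 .
\]
The resulting identities $1-\theta_{t+1}^2 = 1-s_t^2$ and $\theta_{t+1}^2=s_t^2$ with $\theta_{t+1}\le0$ should reduce Step 3 to pure algebra. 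It remains to check the sign conventions carefully.

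\emph{Step 3: elimination.} Substitute $\theta_{t+1}=-s_t$ (or the correct variant from Step 2) and $r_{t+1}=s_tr_t$ into $s_{t+1}^2 = 1-(1+r_{t+1}+\theta_{t+1})^2/D_{t+1}$. Equivalently, use $s_{t+1}^2(1+r_{t+1}+\theta_{t+1})^2=(1-s_{t+1}^2)(1-\theta_{t+1}^2)$ and solve for $s_{t+1}^2$. Then simplify to
\[
s_{t+1}^2=\frac{1-(1+r_t)^2s_t^2}{2-2s_t-(2+r_t)r_ts_t^2}.
\]
For the candidate $\theta_{t+1}=-s_t$ this is a direct check. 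The numerator of $1-s_{t+1}^2$ is $(1+r_{t+1}+\theta_{t+1})^2=(1-s_t+r_ts_t)^2$, and the denominator is $(1+s_tr_t)^2+1-2(1+s_tr_t)s_t$. Both expand as polynomials in $r_t,s_t$, and the difference of the two gives the claimed numerator.

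The main obstacle is Step 2: getting $\theta_{t+1}$ correctly, including its sign, as a function of $s_t$ alone. Once that identity is secured, Step 3 is a routine expansion. It would also be worth checking the degenerate collinear case $\theta_t=-1$ separately, where $\gamma_t^\star=1$ and the algorithm terminates.
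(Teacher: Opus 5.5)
Your overall route is the paper's: get $s_t^2=(1-\theta_t^2)/D_t$, express $\theta_{t+1}$ through $(r_t,s_t)$, then substitute into the same formula at time $t+1$. Step 1 is correct and is exactly \eqref{eq:r_t+1 exact ls}. Your geometric derivation in Step 2, using orthogonality of $u_{t+1}$ to $p+(1+r_t)u_t$, is also valid, and your first formula $\theta_{t+1}=-(1+r_t)\sqrt{1-\theta_t^2}/\sqrt{D_t}$ is right.

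The gap is the identity you write next: $1-\theta_{t+1}^2$ is not $(1+r_t+\theta_t)^2/D_t$ (that quantity is $1-s_t^2$). In fact
\[
D_t-(1+r_t)^2(1-\theta_t^2)=\bigl(1+(1+r_t)\theta_t\bigr)^2 ,
\]
so $1-\theta_{t+1}^2=\bigl(1+(1+r_t)\theta_t\bigr)^2/D_t$, and the conclusion $\theta_{t+1}=-s_t$ is false. As a result, the ``direct check'' you announce in Step 3 would fail. Substituting $\theta_{t+1}=-s_t$ and $r_{t+1}=r_ts_t$ gives
\[
s_{t+1}^2=\frac{1-s_t^2}{2-2s_t+2r_ts_t-2r_ts_t^2+r_t^2s_t^2},
\]
which is not the claimed recursion. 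For example, at $r_t=s_t=\tfrac12$ it gives $\tfrac47$ instead of $\tfrac{7}{11}$.

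The fix is already in your own formula. By Step 1, $\sqrt{1-\theta_t^2}/\sqrt{D_t}=s_t$, so $\theta_{t+1}=-(1+r_t)s_t$, which is precisely the paper's \eqref{eq:theta_t+1 exact ls}. Then $1-\theta_{t+1}^2=1-(1+r_t)^2s_t^2$, and
\[
D_{t+1}=(1+r_ts_t)^2-2(1+r_ts_t)(1+r_t)s_t+1=2-2s_t-(2+r_t)r_ts_t^2 .
\]
Hence $s_{t+1}^2=(1-\theta_{t+1}^2)/D_{t+1}$ is exactly the stated formula. The paper obtains the same $\theta_{t+1}$ algebraically, by evaluating $r_{t+1}\theta_{t+1}$ from \eqref{eq:polar_thetat} at $\gamma_t^\star$, rather than from your orthogonality argument; otherwise the two proofs coincide once this slip is corrected.
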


This reparametrization does not have a one-to-one correspondence with the iterates $x_t$.
However, we can show that for each iterate $(r,s) \in M$ there exists a feasible point $x \in B_1(0)$ with the respective residual and contraction factor (see \cref{lem:reconstruct theta} in \cref{sec:appendix-forward-dynamics}).
The set $M$ is defined as
$$
    M \defi\, \{(r,s) \in \R^2 ~|~ 0 < r \leq 2, 0 \leq s\leq \bar s(r)\}
$$
with
$$
    \bar s(r) \defi \begin{cases} \frac{1}{1+r} & \text{for } 0 < r \leq 1, \\ \sqrt{\frac{2-r}{4}} & \text{for } 1 < r \leq 2. \end{cases}
$$

With this alternate parametrization we can formally define jumps as iterations $t$ with contraction factors $s_t < \frac{1}{2}$.
We can further characterize these iterations by bounding the preceding contraction factor.
\begin{restatable}[Jump characterization]{proposition}{jumpregime}\label{prop:jump}\linktoproof{prop:jump}
  If $s_{t+1} < \frac{1}{2}$, then $s_t > \frac{1}{(1+r_t)^2}$.
  In particular, if additionally $r_t < \sqrt{2}-1$, then $s_t > s_{t+1}$.
\end{restatable}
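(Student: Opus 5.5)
The plan is to work directly with the forward recursion \eqref{eq:forward-dynamics} of \cref{prop:forward dynamics}, which gives $s_{t+1}^2$ in closed form in terms of $(r_t,s_t)$. Write $r=r_t$, $s=s_t$ and $a=(1+r)^2$. Using $(2+r)r=a-1$, the recursion becomes
\begin{equation*}
  s_{t+1}^2=\frac{N}{D},\qquad N\defi 1-as^2,\qquad D\defi 2-2s-(a-1)s^2 .
\end{equation*}
The condition $s_{t+1}<\tfrac12$ is then a rational inequality in $s$, which I will clear of denominators and compare against the candidate threshold $s=1/a$.

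\textbf{Step 1: signs of $N$ and $D$.} Since $(r,s)\in M$, we have $s\le\bar s(r)$, and this should give $N\ge 0$.
\begin{itemize}
  \item For $r\le 1$ this is immediate: $s\le 1/(1+r)$ gives $as^2\le 1$.
  \item For $1<r\le 2$ it reduces to $(1+r)^2(2-r)\le 4$. This holds with equality at $r=1$, and the left side is decreasing on $[1,2]$, since its derivative is $3(1+r)(1-r)\le 0$.
\end{itemize}
If $N=0$, then $s=1/(1+r)>1/a$, because $1+r>1$, so the claim holds trivially. Otherwise $N>0$, and since $s_{t+1}^2=N/D$ is a well-defined nonnegative quantity, we get $D>0$. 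Checking this sign bookkeeping is the only mildly delicate point; the rest is algebra.

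\textbf{Step 2: the main inequality.} With $D>0$, the condition $s_{t+1}^2<\tfrac14$ is equivalent to $4N<D$, that is,
\begin{equation*}
  2+2s-s^2<3as^2 .
\end{equation*}
Suppose for contradiction that $s\le 1/a$. Then $3as^2\le 3s$, so $2+2s-s^2<3s$. This rearranges to $(1-s)(2+s)<0$, which is impossible because $0\le s\le \bar s(r)\le 1$. Hence $s_t>1/(1+r_t)^2$.

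\textbf{Step 3: the ``in particular'' part.} If $r_t<\sqrt2-1$, then $(1+r_t)^2<2$. Combining this with Step 2 gives
\begin{equation*}
  s_t>\frac{1}{(1+r_t)^2}>\frac12>s_{t+1}.
\end{equation*}
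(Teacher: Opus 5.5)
Your proof is correct and takes essentially the same route as the paper. Both clear denominators in the forward recursion to obtain $(3(1+r_t)^2+1)s_t^2-2s_t-2>0$, then use $s_t\le 1$ to bound this by $3s_t\bigl((1+r_t)^2s_t-1\bigr)$; you phrase that step as a contradiction, while the paper writes it as a direct chain. Your explicit check that the denominator $2-2s_t-(2+r_t)r_ts_t^2$ is positive is a useful detail that the paper's ``squaring both sides, rearranging'' step leaves implicit.
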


This result indicates that jumps are triggered only when the preceding contraction factor $s_t$ is sufficiently large and in particular larger than the subsequent contraction factor $s_{t+1}$ if the residual $r_t$ is sufficiently small.
This motivates the search for sequences with monotone increasing contraction rates avoiding jumps.
In particular, we investigate whether the first jump of the trajectory can be delayed by constructing initializations that preserve this monotonicity for as long as possible.

We define the \textit{stable phase} of the trajectory as the sequence of iterations where the contraction rates are strictly monotone increasing, i.e., $s_0 < s_1 < \dots < s_k$.
This phase corresponds to the iterates moving along a trajectory where the contraction $s_t$ progressively deteriorates.
Intuitively, remaining in this phase as long as possible should lead to the worst convergence rates as jumps do not occur within it. 

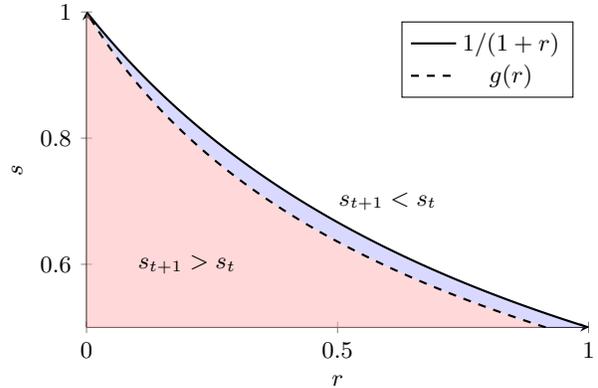
\begin{figure}[t]
  \centering
  \begin{tikzpicture}
    \begin{axis}[
        width=\columnwidth,
        height=0.7\columnwidth,
        xlabel={$r$},
        ylabel={$s$},
        xmin=0, xmax=1,
        ymin=0.5, ymax=1,
        xtick={0,0.5,1},
        ytick={0.4,0.6,0.8,1},
        axis lines=left,
        legend pos=north east,
        legend style={font=\small},
        tick label style={font=\small},
        label style={font=\small},
    ]
    
    \addplot[name path=upper, domain=0:1, samples=100, thick, black] {1/(1+x)};
    \addlegendentry{$1/(1+r)$}
    \addplot[name path=lower, domain=0.5:1, samples=100, black, thick, dashed] ({-1 + sqrt(1 - (2*x^2 - x - 1)/(x^2*(x+1)))}, x);
    \addlegendentry{$g(r)$}
    \addplot[name path=xaxis, draw=none] coordinates {(0,0) (1,0)};
    
    \addplot[fill=red!30, opacity=0.5] fill between[of=lower and xaxis];
    
    \addplot[fill=blue!30, opacity=0.5] fill between[of=upper and lower];

    
    \node[text=black, font=\small] at (axis cs:0.2,0.6) {$s_{t+1} > s_t$};
    \node[text=black, font=\small] at (axis cs:0.6,0.7) {$s_{t+1} < s_t$};
    
    \end{axis}
  \end{tikzpicture}
  \caption{Phase diagram of the forward dynamics in the $(r, s)$ state space. The red region indicates the stable phase where the contraction factor increases ($s_{t+1} > s_t$), while the blue region highlights the unstable regime where the contraction improves ($s_{t+1} < s_t$) and jumps occur. The boundary curve $s = g(r)$ separates these two regimes.}
  \label{fig:regions}
\end{figure}

To visualize these dynamics, \cref{fig:regions} depicts the phase space $(r, s)$ partitioned by the critical curve $s=g(r)$, which represents the monotonicity threshold where $s_{t+1} = s_t$.
This boundary separates the state space into two distinct regimes: the stable domain, highlighted in red, $s_t < g(r_t)$ where the contraction factor strictly increases ($s_{t+1} > s_t$) leading to deteriorating convergence, and the unstable domain, highlighted in blue, $s_t > g(r_t)$ where the rate improves and can yield sudden jumps.
The result in \cref{prop:jump} shows that jumps only occur in the unstable regime.

In order to find a sequence that avoids jumps as long as possible, we look for trajectories that remain in the red region for the maximum duration.
Formally, we consider the trajectories determined by \eqref{eq:forward-dynamics} given an initial contraction factor $s_0 \in [0,\bar s(r_0)]$ for some fixed initial residual $r_0$.
Among all such trajectories, we seek the one that maximizes the duration $\tau$ of the stable phase, i.e., the number of iterations until the contraction rate first decreases,
\begin{equation}
    \label{eq:P_max_stable_phase}
    \begin{aligned}
    \max_{s_0 \in [0, \bar s(r_0)]} &\quad \tau\\
    \text{s.t. }&r_0 = c,\\
    & (r_{t},s_t) \text{ determined by \eqref{eq:forward-dynamics}},\\
    & s_t \le s_{t+1},\quad  t=0,\ldots, \tau-1 
    \end{aligned}
\end{equation}

\subsection{Forward Search}\label{sec:forward search}
To solve \eqref{eq:P_max_stable_phase}, we perform a grid search over initial iterates $(r_0,s_0) \in M$.
We fix $r_0=1$ and sample $s_0$ on a uniform grid in $[0,\frac{1}{1+r_0}]$.
For each sample, we run the dynamics \eqref{eq:forward-dynamics} until the contraction rate decreases, i.e., until $s_{\tau+1}<s_\tau$, recording the stable phase length $\tau$.
\Cref{fig:gridsearch search} shows the length of the stable phase $\tau$ as a function of the initial contraction rate $s_0$ for $r_0=1$ and a grid of \num{10000} samples.

\begin{figure}
    \centering
    \includegraphics[width=\columnwidth]{"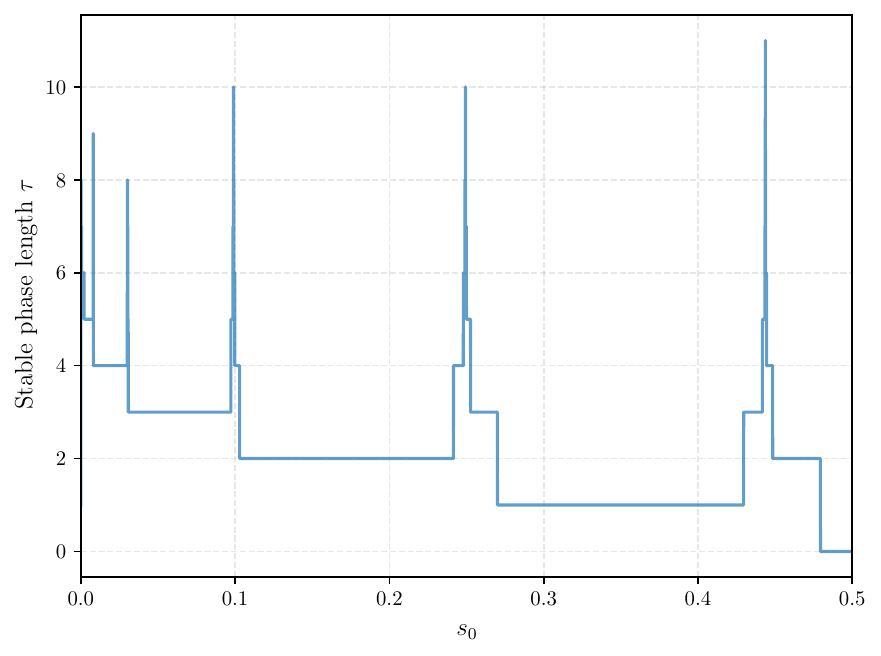"}
    \caption{Grid search for problem \eqref{eq:P_max_stable_phase}, showing the length of the stable phase $\tau$ as a function of $s_0$ for $r_0=1$}
    \label{fig:gridsearch search}
\end{figure}

Across different grid resolutions, we consistently observe a structured pattern: the interval $[0,\frac{1}{1+r_0}]$ contains narrow regimes where starting points yield significantly longer stable phases.
The maximal length of these peaks depends on the grid precision. Specifically, finer grids reveal longer phases.
A bisection search (\cref{sec:appendix bisection search}) confirms this, suggesting one can find starting points with arbitrarily long stable phases and that \eqref{eq:P_max_stable_phase} may be unbounded.
We therefore shift our focus to analyzing these stable segments as worst-case constructions for a lower bound.

\begin{figure}[t]
  \centering
  \begin{tikzpicture}
    \begin{axis}[
        width=\columnwidth,
        height=0.7\columnwidth,
        xlabel={$r$},
        ylabel={$s$},
        xmin=0, xmax=1,
        ymin=0.5, ymax=1,
        xtick={0,0.5,1},
        ytick={0.6,0.8,1},
        axis lines=left,
        legend pos=north east,
        legend style={font=\small},
        tick label style={font=\small},
        label style={font=\small},
    ]
    
    \addplot[name path=upper, domain=0:1, samples=200, thick, black]
      {1/(1+x)};
    \addlegendentry{$1/(1+r)$}

    \addplot[name path=lower, domain=0.5:1, samples=200, black, thick, dashed]
      ({-1 + sqrt(1 - (2*x^2 - x - 1)/(x^2*(x+1)))}, x);
    \addlegendentry{$g(r)$}

    \addplot[name path=xaxis, draw=none, forget plot] coordinates {(0,0) (1,0)};

    \addplot[blue!60!black, thick, domain=0:0.75, samples=200]
      {1 - (4/3)*x};
    \addlegendentry{$1 - \frac{4}{3}r$}

    \addplot[fill=red!30, opacity=0.5, forget plot] fill between[of=lower and xaxis];
    \addplot[fill=blue!30, opacity=0.5, forget plot] fill between[of=upper and lower];

    \addplot[
        only marks,
        mark=*,
        mark size=1pt,
        red!70!black,
    ]
    table [
        col sep=comma,
        x=r,
        y=s
    ] {figures/rs_trajectory.csv};
    \addlegendentry{$\{(r_t,s_t)\}_{t=0}^\tau$}

    
    \end{axis}
  \end{tikzpicture}
  \caption{Trajectory $\{(r_t,s_t)\}_{t=0}^{\tau}$ of a hard instance in the $(r,s)$ phase space. The affine curve $1-\frac{4}{3}r$ is shown as a lower bound, while $g(r)$ is an upper bound on the trajectory.}
  \label{fig:hard instance in rs}
\end{figure}

\Cref{fig:hard instance in rs} shows the trajectory $\{(r_t,s_t)\}_{t=0}^\tau$ for a starting point corresponding to a peak in the grid search.
The trajectory stays closely below the threshold $g(r)$, which acts as an upper bound, and converges to $(0,1)$.
For small $r$, a linear approximation yields $s \approx 1-\frac{4}{3}r$, and empirically this affine function serves as a lower bound along the trajectory.
Notably, this stable trajectory is persistent across all peaks. Different starting points only differ in the number of steps until reaching it.
This suggests the stable trajectory is a worst-case instance for \gls{fw} convergence.

To prove this, we need a lower bound on $s_t$ for all iterations.
While $s \geq 1-\frac{4}{3}r$ is empirically motivated, it lacks theoretical justification since the trajectory's position depends on initial iterates for which we only have numerical values.
This motivates a different approach.

\subsection{Backward Reconstruction Strategy}
While the initial condition of the stable trajectory is unknown, setting $r_{t+1}=r_t$ in \eqref{eq:forward-dynamics} we see that the trajectory converges to $(0,1)$.
We therefore construct it \emph{backward} from near this terminal point: instead of searching for an initial state that lands on the stable trajectory, we initialize the system at a state $(r_T,s_T)$ for large $T$ on the stable trajectory and iterate the dynamics \textit{backward} in time to recover the precise initial condition $(r_0,s_0)$.
Additionally, the results of the bisection search indicate that the stable trajectory satisfies $s \approx 1-\frac{4}{3}r$ for sufficiently small $r$.
Consequently, we initialize the system at a target terminal error $\varepsilon \ll 1$ representing the state at some large iteration $T$,
\begin{equation}\label{eq:endpoints}
    r_T = \varepsilon, \quad s_T = 1 - \frac{4}{3}r_T + 2 r_T^2,
\end{equation}
where we added a second-order term to $s_T$ for technical reasons (see \cref{sec:lower-bound} for details).
This explicitly places the system on the stable trajectory.

The key part of this approach is the reversibility of the recurrence relations in \eqref{eq:forward-dynamics}, to which we refer in the following as the \emph{forward dynamics}.
The update for $s_{t+1}$ is algebraic, involving a square root in the forward direction.
Consequently, recovering the preceding iterate $(r_t,s_t)$ from $(r_{t+1}, s_{t+1})$ involves solving a quadratic equation and thus yields two solutions for $(r_t,s_t)$.
However, this does not pose a problem as we just choose the solution that stays on the stable trajectory.
The full \emph{backward dynamics} are then given by
\begin{equation}  
  \label{eq:backward dynamics}
  \begin{aligned}
    s_{t-1} &= (1+r_t)s_t^2 - r_t \\
    &\quad + \sqrt{(1-s_t^2)(1-(1+r_t)^2s_t^2)} \\
    r_{t-1} &= \frac{r_t}{s_{t-1}} \\
\end{aligned}
\end{equation}


The complete backward reconstruction approach is described in \cref{alg:backward_reconstruction}. Starting with the end point given in \eqref{eq:endpoints}, we trace the trajectory backward using \eqref{eq:backward dynamics} until the residual $r_t$ is sufficiently large.
The constructed trajectory can be then directly verified using the forward dynamics in \eqref{eq:forward-dynamics}.
Beyond this specific application to \gls{fw}, the backward reconstruction methodology may be of independent interest for establishing lower bounds in other optimization settings where the algorithm's dynamics are (locally) invertible. It sidesteps the need to search over high-dimensional initialization spaces and yields trajectories directly amenable to analytical characterization.



\begin{algorithm}[tb]
\caption{Backward-Forward Trajectory Construction. }
\label{alg:backward_reconstruction}
\begin{algorithmic}[1]
\REQUIRE Small $\varepsilon > 0$ (terminal error), threshold $r_{\max}$
\vspace{0.2em}\hrule\vspace{0.2em}
\STATE $(r,s) \gets (\varepsilon,1 - \frac{4}{3}\varepsilon + 2 \varepsilon^2)$ \COMMENT{Initialize}
\WHILE{$r < r_{\max}$}
    \STATE $X \gets (1+r)s^2 - r$
    \STATE $Y \gets \sqrt{(1-s^2)(1-(1+r)^2s^2)}$
    \STATE $r_{\textrm{prev}} \gets \frac{r}{X + Y}$
    \STATE $s_{\textrm{prev}} \gets \frac{r}{r_{\textrm{prev}}} = X + Y$
    \STATE $(r,s) \gets (r_{\textrm{prev}},s_{\textrm{prev}})$
\ENDWHILE
\vspace{0.3em}\hrule\vspace{0.3em}
\ENSURE Start point $(r, s)$
\end{algorithmic}
\end{algorithm}

We execute \cref{alg:backward_reconstruction} to reconstruct the specific initialization $(r_0, s_0)$ that generates the longest stable trajectory.
The convergence profile of this worst-case instance is visualized in \cref{fig:ball_numerics_worst}.
As evidenced by the alignment with the reference slope, the empirical convergence rate follows a $\mathcal{O}(1/\sqrt{\varepsilon})$ rate, numerically motivating our lower bound.
The corresponding iterates for both the worst-case sequence and random initializations are shown on \cref{fig:sub_a} and \cref{fig:sub_b} respectively.
We note that the worst-case sequence follows a damped harmonic oscillation, in contrast to the erratic trajectories of the other initializations, even those close to the worst-case initialization.

\begin{figure}[t]
    \centering
    \includegraphics[width=\columnwidth]{"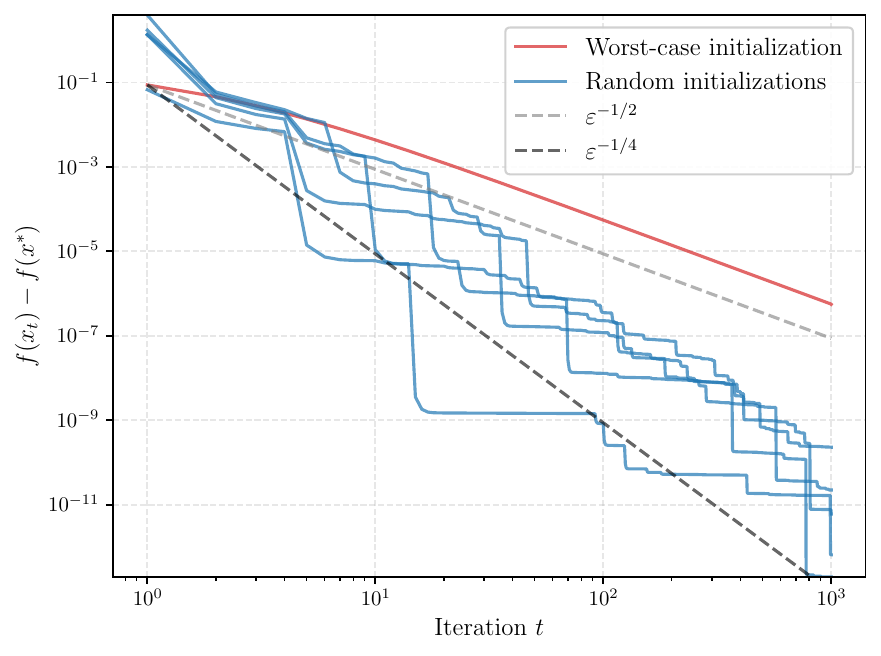"}
    \caption{Log–log plot of the error versus iteration for Frank–Wolfe with exact line search on the Euclidean unit ball with the worst-case trajectory from \cref{alg:backward_reconstruction}.}
    \label{fig:ball_numerics_worst}
\end{figure}

\begin{figure}[t]
    \centering
    \begin{subfigure}[b]{0.85\columnwidth}
        \centering
        \includegraphics[width=\linewidth]{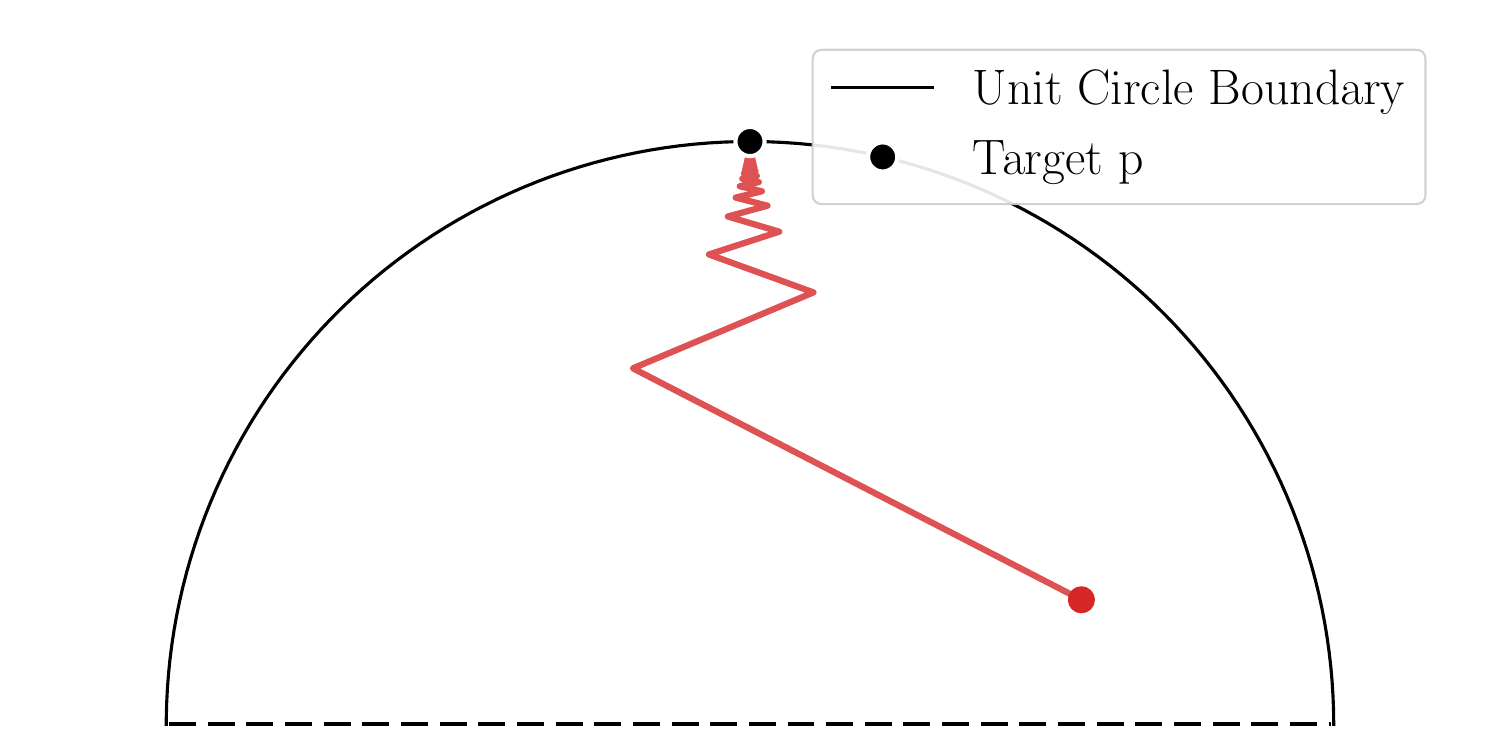}
        \caption{Worst-case initialization} 
        \label{fig:sub_a}
    \end{subfigure}
    

    \begin{subfigure}[b]{0.85\columnwidth}
        \centering
        \includegraphics[width=\linewidth]{"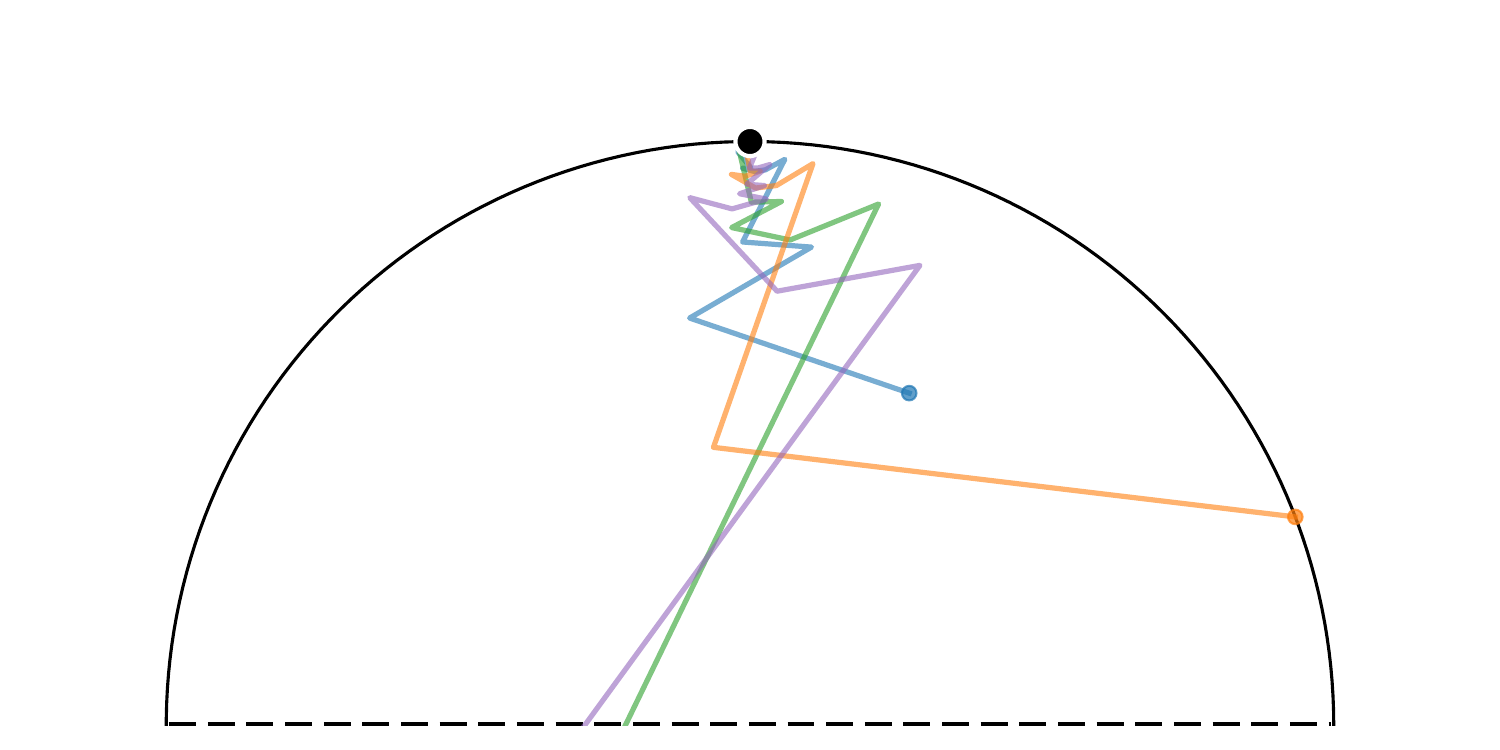"}
        \caption{Random initializations} 
        \label{fig:sub_b}
    \end{subfigure}

    \caption{Iterates of Frank–-Wolfe with exact line search on the Euclidean unit ball, showing the worst-case trajectory from \cref{alg:backward_reconstruction} in \cref{fig:sub_a} and random initializations in \cref{fig:sub_b}, shown on semicircles.} 
    \label{fig:l2_ball_trajectory}
\end{figure}
We note that the constructed jump-free sequence is not guaranteed to be the global worst-case instance.
While we do not preclude the existence of a slower trajectory containing intermittent jumps, this construction proves sufficient to establish the lower bound, as we will show in the next section.
Rather than solving a global worst-case program as in \gls{pep}, we exploit the problem-specific forward/backward dynamics to explicitly construct a concrete long-horizon worst-case trajectory for our model instance.
Note that high-precision arithmetic is instrumental for our approach, as small errors in the backward pass quickly break the stable phase.
Therefore, we use the \texttt{BigFloat} type with 1000 bits of precision in our Julia implementation. The resulting recursion computes $10^3$ iterations in under 10 seconds on a laptop and readily extends to longer runs. Our code is publicly available on GitHub.
\footnote{\url{https://github.com/ZIB-IOL/FW-lower-bound-for-strongly-convex-sets}}

\section{Lower Bound}
\label{sec:lower-bound}
In this section, we derive a lower bound for the convergence rate of the \gls{fw} algorithm applied to \eqref{eq:Pball}.
Our proof follows exactly the numerical backward construction strategy in \cref{alg:backward_reconstruction} for finding a start point on the stable trajectory.

We begin with the end point as defined in \eqref{eq:endpoints} and use the backward dynamics to recover the full trajectory.
The key idea of our proof is to show that all iterates $(r_t,s_t)$ stay on a stable trajectory strictly above the line $s = 1-\frac{4}{3}r$ as seen in the numerical experiments.
Considering the backward computation of $s_t$ on line 6 of \cref{alg:backward_reconstruction}, one needs to find lower bounds for both $X$ and $Y$ in order to lower bound $s_{\textrm{prev}}$ and hence also upper and lower bounds on $s$.
Consequently, we need both upper and lower bounds for the contraction rates to guarantee that the iterates stay on the stable trajectory after a backward pass.

Our numerical results show that the stable trajectory lies exactly between the $s = 1-\frac{4}{3}r$ line and the monotonicity threshold $g(r)$.
Expanding the latter yields that $g(r) = 1 - \frac{4}{3}r + \mathcal{O}(r^2)$ and thus agrees with the lower bound up to first order.
Consequently, any affine linear upper bound larger than $1-\frac{4}{3}r$ would place the trajectory above $g(r)$ for sufficiently small $r$.
Therefore, to make the bounds tight, we add a quadratic term to the approximation of $s_t$,
\begin{equation}
  \label{eq: stable trajectory 2nd order}
    s_t = 1 - \frac{4}{3}r_t + c_t r_t^2.
\end{equation}
Plugging this in the backward dynamics in \cref{alg:backward_reconstruction} directly yields bounds for $c_{t-1}$.
We show that if $c_t$ lies in the interval $[1, \frac{5}{2}]$, then $(s_{t-1},r_{t-1})$ satisfies \eqref{eq: stable trajectory 2nd order} with $c_{t-1} \in [1, \frac{5}{2}]$, see \cref{lem: bounds c'} in \cref{sec:appendix-lower-bound}.
This result demonstrates exactly the preservation of stability of the iterates.
In particular, starting from the end point with $c_T = 2$, induction yields that the whole trajectory stays on this stable trajectory, i.e., satisfies \eqref{eq: stable trajectory 2nd order} with $c_t \in [1, \frac{5}{2}]$.

As $c_t > 0$, we get directly from \eqref{eq: stable trajectory 2nd order} that $s \geq 1 - \frac{4}{3}r$ holds for all iterates $(r_t,s_t)$.
Rearranging and telescoping then yields
\begin{equation}
    r_t \geq \frac{r_0}{1+\frac{8}{3}r_0t}
\end{equation}
for all $t=0,\dots,T$.

Finally, we can derive the lower bound for the primal gap directly from the lower bound on the residuals.
The result extends to strongly convex functions with arbitrary $\mu > 0$ and sets with arbitrary diameter $D > 0$, by scaling the objective and the constraint set, respectively.

\begin{restatable}[Lower Bound]{theorem}{lowerboundshortstepFW}\label{thm:lower_bound_shortstepFW}\linktoproof{thm:lower_bound_shortstepFW}
    For any $d \geq 2$, $\mu>0$ and $D>0$, there exists a compact, smooth, and strongly convex set $\mathcal{X} \subset \R^d$ with $\operatorname{diam}(\mathcal{X})=D$, a $\mu$-strongly convex and smooth function $f:\R^d \to \R$, such that for all $T \in \mathbb{N}$ there exists a starting point $x_0 \in \mathcal{X}$ such that the sequence $\{x_t\}_{t=0}^T$ produced by \acrlong{fw} with short steps (or exact line search) satisfies
    \begin{equation}
        f(x_t) - f(x^*) \geq \Omega\left(\frac{\mu D^2}{t^2}\right)
    \end{equation}
    for all $t=1,\ldots, T$ or equivalently, $T=\Omega\!\left(\sqrt{\mu D^2/\varepsilon}\right)$.
\end{restatable}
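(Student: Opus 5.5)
We prove the statement first for the model instance \eqref{eq:Pball} ($d=2$ suffices by \cref{lem:invariant_subspace}; for $d>2$ we embed the planar construction in $\Span\{p,x_0\}$) and then transfer it to general $\mu$ and $D$ by rescaling. Fix $T$ and, following \cref{alg:backward_reconstruction}, choose a terminal residual $r_T=\varepsilon$ small, with $s_T=1-\frac43 r_T+2r_T^2$, i.e.\ $c_T=2$. We then run the backward dynamics \eqref{eq:backward dynamics} for $T$ steps to obtain $(r_0,s_0)$. The key invariant is \eqref{eq: stable trajectory 2nd order} with $c_t\in[1,\frac52]$. By the invariance lemma (\cref{lem: bounds c'}), if $(r_t,s_t)$ has $c_t\in[1,\frac52]$ and $r_t$ lies below some threshold $\bar r$, then the backward step is well defined (the square root argument $(1-s_t^2)(1-(1+r_t)^2s_t^2)\ge0$ requires $s_t\le \frac{1}{1+r_t}$, which holds since $1-\frac43 r+\frac52 r^2\le \frac1{1+r}$ for small $r$) and gives $c_{t-1}\in[1,\frac52]$. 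By induction all $c_t\in[1,\frac52]$.

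The main obstacle is keeping $r_t$ inside the region where this lemma applies during all $T$ backward steps. Going backward, $r$ grows, since $r_{t-1}=r_t/s_{t-1}$ and $s_{t-1}<1$. Because $s\le 1-\frac43 r+\frac52 r^2$, the backward growth is only $r_{t-1}\le r_t(1+O(r_t))$. So $1/r$ decreases by at most a bounded amount per step, and $1/r_0\ge 1/\varepsilon - CT$. Choosing $\varepsilon\le \bar r/(1+C\bar r T)$ therefore guarantees $r_t\le\bar r$ for all $t$. I would pick $\varepsilon$ this way, e.g.\ $\varepsilon\asymp 1/T$ with a constant fixing the scale, so that $r_0$ is of constant order.

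Next, \cref{lem:reconstruct theta} turns $(r_0,s_0)\in M$ into an actual feasible point $x_0\in B_1(0)$. We must check that the forward FW dynamics from $x_0$ reproduce the constructed sequence. The backward map chooses the root lying on the stable branch, and \cref{prop:forward dynamics} gives $s_{t+1}^2$ as a function of $(r_t,s_t)$ with $s_{t+1}\ge 0$. Hence the forward iteration is single-valued and inverts the chosen backward root. I expect this consistency check, namely that the selected branch is exactly the one whose image under \eqref{eq:forward-dynamics} returns $s_t$, to need a short verification but to be routine.

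With the invariant in hand, $c_t>0$ gives $s_t\ge 1-\frac43 r_t$, so $r_{t+1}\ge r_t-\frac43 r_t^2$. Using $r_t\le\bar r$ small, this yields $\frac1{r_{t+1}}\le \frac1{r_t}+\frac83$. Telescoping gives $r_t\ge r_0/(1+\frac83 r_0 t)$, and with $r_0=\Theta(1)$ we get $f(x_t)-f(x^*)=r_t^2\ge \Omega(1/t^2)$ for $t=1,\dots,T$. Finally we rescale. Take $\mathcal{X}=B_{D/2}(0)$, which is smooth and strongly convex with diameter $D$, and $f(x)=\frac{\mu}{2}\norm{x-p}^2$ with $\norm{p}=D/2$. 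FW with exact line search or short steps is invariant under the change of variables $x\mapsto \frac{D}{2}x$ and under positive scaling of $f$; for short steps this holds because $L=\mu$ and the short step equals the exact line search for isotropic quadratics. Hence the gap scales by $\frac{\mu}{2}\cdot\frac{D^2}{4}$, which gives $\Omega(\mu D^2/t^2)$.
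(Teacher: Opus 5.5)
Most of your plan matches the paper's proof: the terminal point with $c_T=2$, invariance of $c\in[1,\frac52]$ below $\bar r=\frac1{10}$, $F\circ G=\mathrm{id}$, the telescoping step $\frac1{r_{t+1}}\le\frac1{r_t}+\frac83$, and the rescaling. Your choice $1/\varepsilon=1/\bar r+CT$ is also exactly the paper's, with $C=\frac83$. The gap is the sentence ``so that $r_0$ is of constant order''. You only prove $r_0\le\bar r$, and with exactly $T$ backward steps the claim is false.

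Along the invariant region, one step changes $1/r$ by $\frac{1-s}{rs}=\frac{4/3-cr}{1-\frac43 r+cr^2}$. This is at most $\frac43+r$ for $r\le\frac1{10}$ and $c\ge 1$, but it exceeds $\frac43$ when $c=1$. So any uniform constant $C$ that makes your bound $1/r_0\ge 1/\varepsilon-CT$ valid must satisfy $C>\frac43$. The $k$-th backward residual obeys $u_k\le (1/\bar r+C(T-k))^{-1}$, so the actual decrements sum to at most $\frac43T+O(\log T)$. Hence $1/r_0\ge 1/\bar r+(C-\frac43)T-O(\log T)$, i.e.\ $r_0=O(1/T)$.

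Your bound $r_t\ge r_0/(1+\frac83 r_0 t)$ then only gives $r_t\gtrsim 1/T$ for $t\le T$. That suffices for the complexity form $T=\Omega(1/\sqrt{\varepsilon})$; it already follows from $r_t\ge r_T=\varepsilon$ by monotonicity. It does not give $f(x_t)-f(x^*)\ge\Omega(\mu D^2/t^2)$ for all $t=1,\dots,T$ with a constant independent of $T$: at $t=1$ your gap is $O(1/T^2)$.

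What you are missing is a $T$-independent lower bound on $r_0$. The paper gets it by changing the stopping rule, not $\varepsilon$. It iterates $G$ from $(\varepsilon,1-\frac43\varepsilon+2\varepsilon^2)$ until the residual first reaches $\frac1{10}$, and starts FW at the last backward iterate $(u_{\widehat T},v_{\widehat T})$ below that threshold.

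\begin{itemize}
\item Your own telescoping estimate shows $\widehat T\ge T$, so the forward trajectory still covers $t=0,\dots,T$.
\item One more backward step from $u_{\widehat T}<\frac1{10}$ is still admissible and keeps $c\ge 1$. Writing $u=u_{\widehat T+1}$, this gives $v_{\widehat T+1}\ge 1-\frac43u+u^2=(u-\frac23)^2+\frac59\ge\frac59$.
\item Therefore $r_0=u_{\widehat T}=v_{\widehat T+1}u_{\widehat T+1}\ge\frac59\cdot\frac1{10}=\frac1{18}$.
\end{itemize}

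With $r_0\in[\frac1{18},\frac1{10}]$ you get $r_t\ge(18+\frac83 t)^{-1}$, and the rest of your argument, including the rescaling, goes through unchanged.
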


This result settles the question of whether \gls{fw} with short steps or exact line search can achieve a convergence rate uniformly faster than $\mathcal{O}(1/\sqrt{\varepsilon})$ for strongly convex sets.
While linear convergence for \gls{fw} is possible in some settings, our lower bound establishes that improving the upper bound uniformly is not possible: the upper bound on the convergence rate by \citet{garber15faster} is tight up to a constant factor with respect to the precision $\varepsilon$.

Moreover, due to the affine-invariance of exact line search \gls{fw}, our lower bound can be directly extended from the problem \eqref{eq:Pball} to a class of strongly convex quadratics over ellipsoid sets, see \cref{cor:ball_quadratic_lower_bound} in \cref{sec:appendix-lower-bound}.


As shown in \cref{lem:invariant_subspace}, the iterates of \gls{fw} applied to \eqref{eq:Pball} remain in a two-dimensional invariant subspace.
Consequently, the lower bound is independent of the problem dimension $d$. Thus, unlike the lower bound in \citet{jaggi13revisiting} on the simplex or the general lower bounds in \citet{nemirovsky1985problem}, this result is not limited to the high-dimensional setting, where $t < d$. Our lower bound is universal for both the high and low-dimensional settings.

Furthermore, note that while the upper bound of \citet{garber15faster} requires only that the constraint set is strongly convex, our lower bound holds even when the constraint set is smooth. Therefore, additionally assuming that the set is smooth cannot improve the convergence rate of $\mathcal{O}(1/\sqrt{\varepsilon})$. 

On the other hand, numerical experiments show that inaccurate initializations or perturbations of the step-size rule can cause the iterates to leave the stable trajectory, potentially leading to faster convergence rates.
More generally, our lower bound is specific to vanilla \gls{fw}, hence variants specifically designed for strongly convex sets may circumvent this lower bound.


\section{Conclusion and Future Work}
We studied the worst-case behavior of the \gls{fw} algorithm on strongly convex and smooth constraint sets through a concrete and geometrically tractable test case: quadratic minimization over the Euclidean unit ball where the optimizer lies on the boundary. Then, we designed a specialized numerical procedure to construct high-precision worst-case sequences over long horizons, leveraging the fact that the iterates remain in an invariant two-dimensional subspace and a detailed characterization of \gls{fw}'s stable slow-convergence trajectories and intermittent jumps. Guided by this construction, we proved an analytical lower bound showing that \gls{fw} with exact line search or short steps may require $T=\Omega(1/\sqrt{\varepsilon})$ iterations, establishing that the known $\mathcal{O}(1/\sqrt{\varepsilon})$ upper bound is tight w.r.t. the precision $\varepsilon$.

\paragraph{Future work}
Several directions remain open. First, a more fine-grained analysis that quantifies the dependence on problem parameters, such as the smoothness and strong convexity constants of the function and the constraint set. Second, the setting of convex (but not strongly convex) functions still leaves a gap between the upper bound of $\mathcal{O}(1/\varepsilon)$ and the lower bound of $\Omega(1/\sqrt{\varepsilon})$. Third, we observed in numerical experiments that perturbations to the step size and the initialization can cause the iterates to leave the stable trajectory, which calls for a smoothed complexity analysis of \gls{fw}. Fourth, it would be interesting to apply our computational approach to other algorithms and problem classes where long-horizon worst-case behavior is decisive. Finally, it remains to identify whether \gls{fw} variants tailored to strongly convex sets can provably circumvent the uniform $\Omega(1/\sqrt{\varepsilon})$ lower bound while retaining the projection-free advantages of \gls{fw}.

\newpage
\section*{Acknowledgements}
Funded by the Deutsche Forschungsgemeinschaft (DFG, German Research Foundation) under Germany's Excellence Strategy – The Berlin Mathematics Research Center MATH+ (EXC-2046/1, EXC-2046/2, project ID: 390685689).
Bartolomeo Stellato is supported by the NSF CAREER Award ECCS-2239771 and the ONR YIP Award N000142512147.

\section*{Impact Statement}
This paper presents work whose goal is to advance the field of convex optimization. There are many potential societal consequences of our work, none of which we feel must be specifically highlighted here.
\bibliography{refs}
\bibliographystyle{icml2026}


\newpage
\appendix
\onecolumn
\let\addcontentsline\icmlorigaddcontentsline
\tableofcontents
\include{appendix}

\end{document}

%% file: algorithm_config.tex
\usepackage{algorithm}
\usepackage{algcompatible}
\algnewcommand{\lst}{\texttt{lst}}
\algnewcommand{\slst}{\texttt{slst}}
\algnewcommand{\SEND}{\textbf{send}}

\newsavebox{\algleft}
\newsavebox{\algright}

\makeatletter
\newcounter{algorithmicH}
\let\oldalgorithmic\algorithmic
\renewcommand{\algorithmic}{%
  \stepcounter{algorithmicH}
  \oldalgorithmic}
\renewcommand{\theHALG@line}{ALG@line.\thealgorithmicH.\arabic{ALG@line}}
\makeatother

%% file: appendix.tex
\section{Problem setup}

\invariantsubspace*
\begin{proof}\linkofproof{lem:invariant_subspace}
    We proceed by induction on $t$:
    By definition, $x_0 \in V$, with $V \defi \Span\{p,x_0\}$, hence the base case holds.

    Now assume that $x_t \in V$ for some $t \ge 0$.
    For \eqref{eq:Pball}, we have by \cref{eq:ball_lmo} that $v_t = -\frac{x_t - p}{\norm{x_t - p}}$ if $x_t \neq p$.
    Hence $v_t$ is a scalar multiple of $x_t - p$, and therefore $v_t \in \Span\{x_t,p\}$.
    Since $x_t \in V$ by the induction hypothesis and $p \in V$ by definition, it follows that $\Span\{x_t,p\} \subseteq V$, and thus $v_t \in V$.
    The \gls{fw} update is given by $x_{t+1} = (1-\gamma_t)x_t + \gamma_t v_t$, $\gamma_t \in [0,1]$.
    As both $x_t$ and $v_t$ belong to the linear subspace $V$, we conclude that $x_{t+1} \in V$.

    If $x_t = p$, the algorithm terminates and the result follows trivially.
\end{proof}

\subsection{Polar coordinates derivation}

We briefly recall the definition of the polar coordinates in \cref{eq:polar_def}:
$$
    r_t \defi \norm{x_t - p},
    \qquad
    \theta_t \defi \frac{\innp{ x_t - p, p}}{\norm{x_t - p}} \in [-1,0].
$$
The update \eqref{eq:fw} in terms of the polar coordinates is given by
\begin{equation}
    \label{eq:x_update}
    x_{t+1} - p = \big((1-\gamma_t)r_t - \gamma_t\big)\frac{x_t - p}{r_t} - \gamma_t p.
\end{equation}

Feasibility $\norm{x_t}\le 1$ is equivalent to $\norm{p + (x_t - p)}^2 \le 1$, which gives $1 + r_t^2 + 2 r_t \theta_t \le 1$ and thus
\begin{equation}
    \label{eq:ball_feas_theta}
    \theta_t \le -\tfrac{1}{2} r_t.
\end{equation}

Taking the norm and the inner products with $p$ of \eqref{eq:x_update} gives the compact polar \gls{fw} recurrence in \eqref{eq:polar_rt} and \eqref{eq:polar_thetat}:
\begin{align*}
    r_{t+1}^2 &= \big((1-\gamma_t)r_t - \gamma_t\big)^2 - 2\gamma_t\big((1-\gamma_t)r_t - \gamma_t\big)\theta_t + \gamma_t^2, \\
    \theta_{t+1} &= \frac{\big((1-\gamma_t)r_t - \gamma_t\big)\theta_t - \gamma_t}{r_{t+1}}.
\end{align*}

\twosteptermination*
\begin{proof}\linkofproof{prop:ball_twostep}

    For $\gamma_0 = \frac{r_0}{r_0 + 1}$, the next iterate $x_1$ is collinear with $p$:
    $$
        x_1 = x_0 + \gamma_0 (v_0 - x_0) = x_0 + \frac{r_0}{r_0+1} \left(-\frac{x_0 -p}{r_0} - x_0\right) = \frac{p}{r_0+1}.
    $$
    In particular, we have
    $$
        \theta_1 = \frac{\innp{x_1-p,p}}{\|x_1-p\|} = \frac{\left(\frac{1}{r_0+1}-1\right)\innp{p,p}}{\left|\frac{1}{r_0+1}-1\right| \|p\|} = -\|p\| = -1.   
    $$
    In the second iteration we use a full step to reach the optimum.
    Specifically, using $\gamma_1 = 1$ and \eqref{eq:polar_rt} we get
    $$
        r_2^2 = ((1-\gamma_1)r_1-\gamma_1)^2 - 2\gamma_1((1-\gamma_1)r_1 -\gamma_1)\theta_1 + \gamma_1^2 = 1^2 - 2(-1)(-1) + 1^2 = 0.
    $$

\end{proof}

\subsection{Dynamics under exact line search}
We begin by deriving the solution for the exact line search. For that we rewrite \eqref{eq:polar_rt}
\begin{align}
    \label{eq: r_t+1 as quadratic}
    r_{t+1}^2 &= \big((1-\gamma_t)r_t - \gamma_t\big)^2 - 2\gamma_t\big((1-\gamma_t)r_t - \gamma_t\big)\theta_t + \gamma_t^2\notag\\
    &= (r_t-\gamma_t(r_t+1))^2 - 2\gamma_t(r_t-\gamma_t(r_t+1))\theta_t + \gamma_t^2\notag\\
    &= \left((r_t+1)^2 + 2(r_t+1)\theta_t+1\right)\gamma_t^2  -2r_t (1 + r_t+ \theta_t) \gamma_t + r_t^2.
\end{align}
Since $r_{t+1}^2$ is a quadratic in $\gamma_t$, we can obtain the minimizer directly
$$
    \gamma^*_t = \frac{r_t(1 + r_t+ \theta_t)}{(r_t+1)^2 + 2(r_t+1)\theta_t+1}.
$$
Substituting $\gamma_t^*$ back in \eqref{eq: r_t+1 as quadratic} yields
\begin{align}
    \label{eq:r_t+1 exact ls}
    r_{t+1}^2 &= \frac{r_t^2(1 + r_t+ \theta_t)^2}{(r_t+1)^2 + 2(r_t+1)\theta_t+1} -2 \frac{r_t^2(1 + r_t+ \theta_t)^2}{(r_t+1)^2 + 2(r_t+1)\theta_t+1} +r_t^2 \notag\\
    &= \frac{- r_t^2(1 + r_t+ \theta_t)^2 + r_t^2\left((r_t+1)^2 + 2(r_t+1)\theta_t+1\right)}{(r_t+1)^2 + 2(r_t+1)\theta_t+1}\notag\\
    &= \frac{- r_t^2\left((1 + r_t)^2 + 2(1+r_t)\theta_t + \theta_t^2\right) + r_t^2\left((r_t+1)^2 + 2(r_t+1)\theta_t+1\right)}{(r_t+1)^2 + 2(r_t+1)\theta_t+1}\notag\\
    &= \frac{r_t^2(1-\theta_t^2)}{(r_t+1)^2 + 2(r_t+1)\theta_t+1}.
\end{align}
Next we consider $r_{t+1}\theta_{t+1}$. Using \eqref{eq:polar_thetat}
\begin{align*}
    r_{t+1}\theta_{t+1} &= \left((1-\gamma_t)r_t-\gamma_t\right)\theta_t - \gamma_t\\
    &= r_t\theta_t -\gamma_t (1+\theta_t+r_t\theta_t)\\
    &= r_t\theta_t - \frac{r_t(1 + r_t+ \theta_t)(1+\theta_t+r_t\theta_t)}{(r_t+1)^2 + 2(r_t+1)\theta_t+1}\\
    &= \frac{r_t\theta_t\left((r_t+1)^2 + 2(r_t+1)\theta_t+1 \right) -  r_t(1 + r_t+ \theta_t)(1+\theta_t+r_t\theta_t)}{(r_t+1)^2 + 2(r_t+1)\theta_t+1}\\
    &=\frac{r_t\theta_t\left((r_t+1)^2 + 2(r_t+1)\theta_t\right) -  r_t\left((1 + r_t) + (1+r_t)^2\theta_t + \theta_t^2(1+r_t) \right)}{(r_t+1)^2 + 2(r_t+1)\theta_t+1}\\
    &=\frac{r_t (1+r_t) (\theta_t (r_t + 1) +2\theta_t^2 - 1 - \theta_t (r_t + 1) -\theta_t^2) }{(r_t+1)^2 + 2(r_t+1)\theta_t+1}\\
    &=\frac{r_t (1+r_t) (\theta_t^2 - 1) }{(r_t+1)^2 + 2(r_t+1)\theta_t+1}\\
    &= -\frac{r_t+1}{r_t}r_{t+1}^2
\end{align*}
Dividing by $r_{t+1} > 0$ yields
\begin{equation}
    \label{eq:theta_t+1 exact ls}
    \theta_{t+1} = -\frac{r_t+1}{r_t}r_{t+1}.    
\end{equation}

\ballrate*
\begin{proof}\linkofproof{prop:ball_rate}
    Taking the absolute values of \eqref{eq:theta_t+1 exact ls}, yields $1 \geq |\theta_{t+1}| = \frac{1+r_t}{r_t} r_{t+1}$, which is equivalent to 
    \begin{equation}
        \label{eq:ball_universal}
        r_{t+1} \le \frac{r_t}{1+r_t}.
    \end{equation}
    Equality holds if and only if $|\theta_{t+1}|=1$.
    \Cref{eq:ball_universal} is equivalent to $\frac{1}{r_{t+1}} \ge \frac{1}{r_t} + 1$ and telescoping yields \eqref{eq:ball_reciprocal}.
\end{proof}

\section{Finding Worst-Case Trajectories}
\subsection{Forward dynamics} \label{sec:appendix-forward-dynamics}
We begin by showing that the dynamics of the polar coordinates can be expressed as a recursion for the residuals $r_t$ and the contraction factor $s_t = \frac{r_{t+1}}{r_t}$ instead of the angle $\theta_t$.

\forwarddynamics*
\begin{proof}\linkofproof{prop:forward dynamics}
    We consider \eqref{eq:r_t+1 exact ls} at time $t+1$
    $$
        r_{t+2}^2 = \frac{r_{t+1}^2(1-\theta_{t+1}^2)}{(r_{t+1}+1)^2 + 2(r_{t+1}+1)\theta_{t+1}+1}.
    $$
    Using \eqref{eq:theta_t+1 exact ls} yields
    $$
        1-\theta_{t+1}^2 = 1-\frac{(1+r_t)^2}{r_t^2} \, r_{t+1}^2
    $$
    and 
    $$
        (1+r_{t+1})^2+1+2(1+r_{t+1})\theta_{t+1} = 2 - \frac{2}{r_t} \, r_{t+1} - \frac{2+r_t}{r_t} \, r_{t+1}^2.
    $$
    Combining these equations yields a two-step recursion for the residuals $r_t$
    $$
        r_{t+2}^2 = \frac{r_{t+1}^2(1-\frac{(1+r_t)^2}{r_t^2} \, r_{t+1}^2)}{(r_{t+1}+1)^2 + 2(r_{t+1}+1)\theta_{t+1}+1} = \frac{r_{t+1}^2(1-\frac{(1+r_t)^2}{r_t^2} \, r_{t+1}^2)}{2 - \frac{2}{r_t} \, r_{t+1} - \frac{2+r_t}{r_t} \, r_{t+1}^2}.
    $$
    Dividing by $r_{t+1}^2$ and replacing $\frac{r_{t+1}}{r_t}$ with the contraction factor $s_t$ yields the result.
\end{proof}

We call this recurrence relation the \emph{forward dynamics} and give a formal definition below.

\begin{definition}
    \label{def: forward dynamics}
    The \emph{forward dynamics} are the function $F: M \to M$ defined by
    \begin{equation}
        \label{eq: def F}
        F(r,s) = \left(rs, \sqrt{\frac{1 - (r+1)^2 s^2}{2 - 2s - (2+r)r s^2}}\right),
    \end{equation}
    where the domain $M$ is defined as
    \begin{equation}
        \label{eq: def M}
        M \defi \left\{(r,s) \in \R^2 ~\middle|~ 0 < r \leq 1, 0 \leq s \leq \frac{1}{1+r} \text{ or } 1 < r \leq 2, 0 \leq s \leq \sqrt{\frac{2-r}{4}}\right\}.
    \end{equation}
\end{definition}

\begin{lemma}
    \label{lem:F well defined}
    The forward dynamics $F:M\to M$ are well defined.
\end{lemma}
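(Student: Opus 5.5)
The plan is to verify directly, for each $(r,s)\in M$, four things: the radicand in \eqref{eq: def F} is well defined and nonnegative, its denominator is strictly positive, the first component $rs$ lies in $(0,2]$, and the second component $s'$ satisfies $0\le s'\le \bar s(rs)$. The idea is to reinterpret the formula through the exact line search identities \eqref{eq:r_t+1 exact ls} and \eqref{eq:theta_t+1 exact ls}, rather than manipulating the rational function blindly. Set $r'\defi rs$ and $\theta'\defi -(1+r)s$, mirroring \eqref{eq:theta_t+1 exact ls}. The computation in the proof of \cref{prop:forward dynamics} then gives
\begin{equation*}
  1-(1+r)^2s^2 = 1-\theta'^2,\qquad 2-2s-(2+r)rs^2 = (1+r')^2+2(1+r')\theta'+1 .
\end{equation*}
So $F(r,s)=\bigl(r',\sqrt{(1-\theta'^2)/((1+r')^2+2(1+r')\theta'+1)}\bigr)$.

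\textbf{Step 1: $\theta'\in[-1,0]$.} It suffices to show $(1+r)s\le 1$. For $0<r\le 1$ this is exactly $s\le \frac1{1+r}$. For $1<r\le 2$ we have $(1+r)^2s^2\le h(r)/4$ with $h(r)=(1+r)^2(2-r)$. Since $h'(r)=3(1+r)(1-r)<0$ on $(1,2]$, we get $h(r)\le h(1)=4$. Hence the numerator $1-\theta'^2\ge 0$.

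\textbf{Step 2: positive denominator.} With $a=1+r'$ and $\theta'\ge -1$,
\begin{equation*}
  a^2+2a\theta'+1\ \ge\ a^2-2a+1=r'^2>0,
\end{equation*}
provided $r'>0$. Thus the square root is well defined and $s'\ge 0$.

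\textbf{Step 3: the first component.} We have $r'=rs\le \frac{r}{1+r}<1\le 2$. Positivity $r'>0$ requires $s>0$. The boundary case $s=0$ (the iterate already at $p$) has to be excluded from $M$, or handled by a convention such as $F(r,0)=(r\cdot 0,\,\cdot)$, since then $r'=0\notin M$. I would state this caveat explicitly, since the definition of $M$ as written allows $s=0$.

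\textbf{Step 4: the upper bound on $s'$.} Since $r'<1$, the relevant bound is $\bar s(r')=\frac1{1+r'}=\frac1a$. The inequality $s'^2\le 1/a^2$ is equivalent to $(1-\theta'^2)a^2\le a^2+2a\theta'+1$, which rearranges to
\begin{equation*}
  0\le a^2\theta'^2+2a\theta'+1=(a\theta'+1)^2 ,
\end{equation*}
and this always holds. This mirrors \cref{prop:ball_rate}: the bound $s'\le 1/(1+r')$ is precisely the universal contraction \eqref{eq:ball_universal} at the next step.

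The main obstacle is bookkeeping rather than depth. The substitution $\theta'=-(1+r)s$ must reproduce the denominator identity, which follows from the algebra already done in the proof of \cref{prop:forward dynamics}. The regime $1<r\le 2$ is the only place the second branch of $\bar s$ enters, and it is handled by the monotonicity of $h$ in Step 1. The degenerate point $s=0$ must also be addressed. Once these are in place, the remaining inequalities reduce to the perfect squares shown above.
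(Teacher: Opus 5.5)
Your proof is correct and follows essentially the paper's route: the same checks on the numerator, the denominator, the first component, and the bound $s'\le \frac{1}{1+r'}$. Your perfect square $(a\theta'+1)^2$ is exactly the paper's $\bigl(1-(1+r)s(1+rs)\bigr)^2$ under $\theta'=-(1+r)s$. The differences are minor: your denominator bound $r'^2$ replaces the paper's monotonicity-in-$s$ argument, which gives the uniform bound $\frac{r^2}{(1+r)^2}$, and you also prove $\sqrt{(2-r)/4}\le\frac{1}{1+r}$, which the paper only asserts.

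Your caveat about $s=0$ is valid and exposes a real slip in the paper's proof. Its claim $r_+>0$ fails there, since $F(r,0)=(0,1/\sqrt{2})\notin M$. Restricting to $s>0$ is harmless later, because $F$ is only applied to outputs of $G$, and these have $s\ge\frac{5}{12}$.
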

\begin{proof}
    Let $(r,s)\in M$ and write $F(r,s)=(r_+,s_+)$.
    Considering the definition of $M$, we note that $\sqrt{\frac{2-r}{4}}\le \frac{1}{1+r}$ for $r \in [1,2]$. Thus we have $0\le s\le \frac{1}{1+r}$.
    Hence the numerator in \eqref{eq: def F} satisfies
    $$
        1-(1+r)^2s^2 \ge 0.
    $$
    For the denominator $d(s)\defi 2-2s-(2+r)rs^2$ we note that it is strictly decreasing in $s\ge 0$.
    Therefore, for all $s\in\left[0,\frac{1}{1+r}\right]$,
    \begin{align*}
        d(s) &\ge d\!\left(\frac{1}{1+r}\right)
        = 2-\frac{2}{1+r}-\frac{(2+r)r}{(1+r)^2}
        = \frac{r^2}{(1+r)^2} > 0.
    \end{align*}
    Consequently, the fraction in \eqref{eq: def F} is non-negative and $s_+\in\R$ with $s_+\ge 0$.

    Next, we show $(r_+,s_+)\in M$.
    Since $r_+=rs$ and $(r,s)\in M$, we have $r_+>0$ and
    $$
        r_+\le
        \begin{cases}
            \frac{r}{1+r}\le 1, & 0<r\le 1,\\
            r\sqrt{\frac{2-r}{4}}\le 1, & 1<r\le 2,
        \end{cases}
    $$
    where the second inequality uses $r^2(2-r)\le 4$ for $r\in[1,2]$.
    It remains to prove $s_+\le \frac{1}{1+r_+}$.
    Squaring, using $1+r_+>0$, and substituting the definition of $s_+^2$ gives the equivalent inequality
    $$
        (1+r_+)^2\!\left(1-(1+r)^2s^2\right) \le 2-2s-(2+r)rs^2.
    $$
    Rearranging yields a perfect square:
    $$
        \left(2-2s-(2+r)rs^2\right) - (1+rs)^2\left(1-(1+r)^2s^2\right) = \left(1-(1+r)s(1+rs)\right)^2 \geq 0.
    $$
    Thus $s_+^2\le \frac{1}{(1+r_+)^2}$, i.e., $s_+\le \frac{1}{1+r_+}$, and since $0<r_+\le 1$ we conclude $(r_+,s_+)\in M$.
\end{proof}

While the function $F$ is defined for a larger domain than $M$, we will show that the iterates $(r_t,s_t) \in M$ correspond to a feasible point $x \in B_1(0)$.
For that we need to show that we can reconstruct the angle $\theta$ for a given residual $r$ and contraction rate $s$.

\begin{lemma}
    \label{lem:reconstruct theta}
    For any $(r,s) \in M$, there exists a feasible point $x \in B_1(0)$ with $\|x-p\| = r$ and $\frac{\|x'-p\|}{\|x-p\|} = s$, where $x'$ is the next iterate of a \gls{fw} step starting at $x$.
\end{lemma}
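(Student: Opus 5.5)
The plan is to reduce the statement to a one-dimensional intermediate value argument in the angle variable $\theta$. By \cref{lem:invariant_subspace} and the polar description \eqref{eq:polar_def}, a point $x$ is determined within the plane $\Span\{p,u\}$ by $(r,\theta)$, where $u\perp p$ is a unit vector (it exists since $d\ge 2$). Concretely, I would set
\[
x = p + r\bigl(\theta p + \sqrt{1-\theta^2}\,u\bigr).
\]
Then $\norm{x-p}=r$, and feasibility $\norm{x}\le 1$ is equivalent to \eqref{eq:ball_feas_theta}, i.e.\ $\theta\le -r/2$. After one exact line search step, \eqref{eq:r_t+1 exact ls} gives the contraction factor
\[
s^2 = h_r(\theta) \defi \frac{1-\theta^2}{(1+r)^2+2(1+r)\theta+1}.
\]
This formula only uses the quadratic minimization in $\gamma$, so it remains valid even when $x'=p$. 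It therefore suffices to show that for every $(r,s)\in M$ there is some $\theta\in[-1,-r/2]$ with $h_r(\theta)=s^2$.

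First I would check continuity. The denominator equals $\norm{p-v}^2$-type quantity and is positive on $[-1,0]$: at $\theta=-1$ it equals $r^2>0$, and it is increasing in $\theta$. Hence $h_r$ is continuous on $[-1,-r/2]$, and $h_r(-1)=0$. I would then compute two further values of $h_r$.

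1. At the feasibility boundary, $h_r(-r/2) = \frac{1-r^2/4}{2+r} = \frac{2-r}{4}$.
2. The equation $h_r(\theta)=1/(1+r)^2$ rearranges to $\bigl((1+r)\theta+1\bigr)^2=0$. So the value $1/(1+r)^2$ is attained exactly at $\theta^\ast=-1/(1+r)$. This is consistent with the bound $s\le 1/(1+r)$ obtained from \eqref{eq:theta_t+1 exact ls}.

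Next I would split into the two cases in the definition of $M$. If $0<r\le 1$, then $r(1+r)\le 2$, so $\theta^\ast\le -r/2$ and $\theta^\ast$ is feasible. The intermediate value theorem on $[-1,\theta^\ast]$ then produces every $s^2\in[0,1/(1+r)^2]$. If $1<r\le 2$, then $\theta^\ast$ is infeasible. I would instead apply the intermediate value theorem on the whole feasible interval $[-1,-r/2]$, which yields every $s^2\in[0,(2-r)/4]$, i.e.\ $s\in[0,\bar s(r)]$. In both cases we obtain a feasible $x$ with the prescribed $r$ and $s$.

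The main obstacle is the algebra in step 2, namely verifying the perfect-square identity, together with checking that the case split matches $\bar s(r)$ exactly. Both reduce to the short computations above. One degenerate point also needs attention: $s=0$ corresponds to $\theta=-1$, where $x$ is collinear with $p$ and $\gamma^\ast=1$ gives $x'=p$. This is consistent with the formula for $h_r$, so no separate argument is needed.
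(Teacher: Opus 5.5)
Your proof is correct, but it takes a different route from the paper. The paper inverts $s^2 = h_r(\theta)$ explicitly. It solves the quadratic $\theta^2 + 2s^2(1+r)\theta + s^2((1+r)^2+1) - 1 = 0$ and reads off from the discriminant $4(s^2-1)(s^2(1+r)^2-1)$ that a real root exists exactly when $s \le 1/(1+r)$. It then selects the root $\theta = -s^2(r+1) - \sqrt{(s^2-1)(s^2(1+r)^2-1)}$ and checks two things: feasibility $\theta \le -r/2$, via a case split and a monotonicity-in-$r$ argument that reduces to the boundary $r = 2-4s^2$ (where both sides equal $1-5s^2+4s^4$); and $\theta \ge -1$, via Cauchy--Schwarz.

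You instead treat $h_r$ as a continuous function on the feasible interval $[-1,-r/2]$ and get existence from the intermediate value theorem. You need only three facts: $h_r(-1)=0$, $h_r(-r/2)=(2-r)/4$, and the perfect square $1-(1+r)^2h_r(\theta) = ((1+r)\theta+1)^2/\norm{x-v}^2$. The last shows that $1/(1+r)^2$ is the global maximum of $h_r$, attained at $\theta^\ast=-1/(1+r)$.

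What each route buys: yours is shorter and avoids the somewhat delicate feasibility check of a specific root. It also explains the two-piece form of $\bar s(r)$: the formula switches exactly when the maximizer $\theta^\ast$ leaves the feasible interval, i.e. when $r(1+r)>2 \iff r>1$. The paper's route is constructive. It yields a closed-form $\theta$, hence an explicit iterate $x_0$ for the $(r_0,s_0)$ produced by the backward reconstruction, which your argument does not provide.

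Two small remarks. The denominator of $h_r$ is $\norm{x-v}^2$, not a $\norm{p-v}^2$-type quantity. Also, your use of \eqref{eq:r_t+1 exact ls} tacitly needs the unconstrained minimizer $\gamma^\ast$ to lie in $[0,1]$ for all $\theta\in[-1,-r/2]$. This holds because $1-\gamma^\ast = (2+r)(1+\theta)/\norm{x-v}^2 \ge 0$, as the paper asserts.
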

\begin{proof}
    Dividing \cref{eq:r_t+1 exact ls} by $r_{t}^2$ yields the following relation between the contraction rate $s$ and the angle $\theta$
    $$
        s^2 = \frac{1-\theta^2}{(r+1)^2+ 2(r+1)\theta +1},
    $$
    where we have dropped the indices for simplicity.
    Rearranging yields a quadratic in $\theta$,
    $$
        \theta^2 + 2s^2(1+r)\theta + s^2((1+r)^2+1) - 1= 0,
    $$
    whose discriminant is
    $$
        \Delta = 4s^4(1+r)^2 -4(s^2((1+r)^2+1)-1)=4(s^2-1)\left(s^2(1+r)^2 - 1\right).
    $$
    Due to the exact line search, the residuals are monotone decreasing which yields $s\leq 1$ for the contraction rate. 
    Therefore, the discriminant is non-negative if and only if $s \leq \frac{1}{1+r}$.
    Since $\sqrt{\frac{2-r}{4}} \leq \frac{1}{1+r}$ for $r \in [0,2]$, the discriminant is non-negative for all $(r,s) \in M$.
    In the following, we show that the solution
    \begin{equation}
        \label{eq:theta reconstruct}
        \theta = -s^2(r+1) -\sqrt{(s^2-1)(s^2(1+r)^2-1)}
    \end{equation}
    satisfies the feasibility condition $\theta \leq -\frac{r}{2}$, which is equivalent to
    \begin{equation}
        \label{eq:theta feasible reconstruct}
        \frac{r}{2} - s^2(r+1) \leq \sqrt{(s^2-1)(s^2(1+r)^2-1)}.
    \end{equation}

    In the case that $-s^2(r+1) < -\frac{r}{2}$, we are directly done since the condition $s \leq \frac{1}{1+r}$ guarantees that the square root is real and positive.
    For the case $-s^2(r+1) \geq -\frac{r}{2}$, we have $s\leq \frac{1}{2}$ for $r \in [0,1]$.
    By definition of the set $M$ we have $s\leq \sqrt{\frac{2-r}{4}}\leq \frac{1}{2}$ for $r \in [1,2]$.
    Consequently, we have $r\leq 2-4s^2$.
    In the following, we show that the left-hand side of \eqref{eq:theta feasible reconstruct} attains its maximum and the right-hand side its minimum for $r=2-4s^2$.
    Note that the left-hand side is monotone increasing and the right-hand side decreasing in $r$.
    Substituting $r=2-4s^2$ in both sides of \eqref{eq:theta feasible reconstruct} yields
    $$
        1 -2s^2 - s^2(3-4s^2) = 1 - 5s^2 + 4s^4
    $$
    and
    \begin{align*}
        \sqrt{(s^2-1)(s^2(1+2-4s^2)^2-1)}
        &= \sqrt{16 s^8 - 40 s^6 + 33 s^4 - 10 s^2 + 1}\\
        &= \sqrt{(1-5s^2 +4s^4)^2}\\
        &= 1-5s^2 +4s^4.
    \end{align*}
    Therefore, \cref{eq:theta feasible reconstruct} holds and thus the reconstructed $\theta$ in \eqref{eq:theta reconstruct} is feasible for any $(r,s) \in M$.
    Furthermore, writing $\theta$ as an inner product of two vectors and using the Cauchy-Schwarz inequality, we get
    \begin{align*}
        \theta &= -s^2(r+1) - \sqrt{(1-s^2)\left(1 - s^2(1+r)^2\right)} \\
        &= -\left\langle 
            \begin{pmatrix}
                s \\ \sqrt{1-s^2}
            \end{pmatrix},
            \begin{pmatrix}
                s(r+1) \\ \sqrt{1-s^2(1+r)^2}
            \end{pmatrix}
        \right\rangle \\
        &\geq -\sqrt{s^2 + (1-s^2)} \cdot \sqrt{s^2(1+r)^2 + (1-s^2(1+r)^2)} \\
        &= -1.
    \end{align*}
    Together with $r>0$, we have that $(r,\theta)$ are valid polar coordinates for any $(r,s) \in M$ corresponding to a feasible point $x \in B_1(0)$.
\end{proof}

\subsection{Monotonicity threshold}
Setting $s_{t+1} = s_t$ in \eqref{eq:forward-dynamics} yields
$$
    s_t^2 = s_{t+1}^2 = \frac{1-(1+r_t)^2s_t^2}{2 - 2s_t - (2+r_t)r_t s_t^2},
$$
which rearranges to
$$
    (r(r+2)s^3 +(r^2+2r+2)s^2-s-1)(s-1) = 0,
$$
where we have dropped the indices for simplicity.
Since $s = 1$ is only feasible for $r = 0$, we consider only the first term. This is quadratic in $r$ and has two solutions
$$
    r_{1,2}(s) = -1 \pm \sqrt{1-\frac{2s^2-s-1}{s^2(s+1)}}.
$$
Since $r_2 < 0$, we consider $r_1$ and restrict $s$ to $(0,1]$. Let $g(r)$ be the implicit solution for the equation $r = r_1(s)$, we call this the \emph{monotonicity threshold}.

\jumpregime*
\begin{proof}\linkofproof{prop:jump}
Using \eqref{eq:forward-dynamics} we have
$$
    \frac{1}{2} > s_{t+1} = \sqrt{\frac{1-(1+r_t)^2s_t^2}{2 - 2s_t - (2+r_t)r_t s_t^2}}.
$$
Squaring both sides, rearranging and $s_t < 1$ yields
\begin{align*}
    0 &< s_t^2(3r_t^2+6r_t+4) - 2s_t - 2
    < s_t^2(3r_t^2+6r_t+3) +1 - 2s_t - 2 \\
    &< s_t^2(3r_t^2+6r_t+3)  - 3s_t
    = 3s_t (s_t(1+r_t)^2 -1).
\end{align*}
Since $s_t \geq 0$, we get $s_t > \frac{1}{(1+r_t)^2}$. For $r_t < \sqrt{2}-1$, we have $s_t > \frac{1}{2} > s_{t+1}$.
\end{proof}

\subsection{Bisection search} \label{sec:appendix bisection search}
In this section, we describe the bisection search, which extends the grid search described in \cref{sec:forward search}.
The purpose of the grid search is to find starting points with long stable phases by directly exploring the forward dynamics in \eqref{eq:forward-dynamics}.
To this end we have fixed the initial residual $r_0 = 1$ and varied the initial contraction $s_0$ on a uniform grid in the admissible interval $[0,\frac{1}{1+r_0}]$.

Recall that the grid search uncovers a remarkably structured pattern.
Within the interval $[0,\frac{1}{1+r_0}]$, there are several very narrow yet clearly defined regions where specific starting points lead to much longer stable phases.
The maximum length of these peaks is determined by the resolution of the grid search.

\begin{figure}
    \centering
    \includegraphics[width=0.5\columnwidth]{"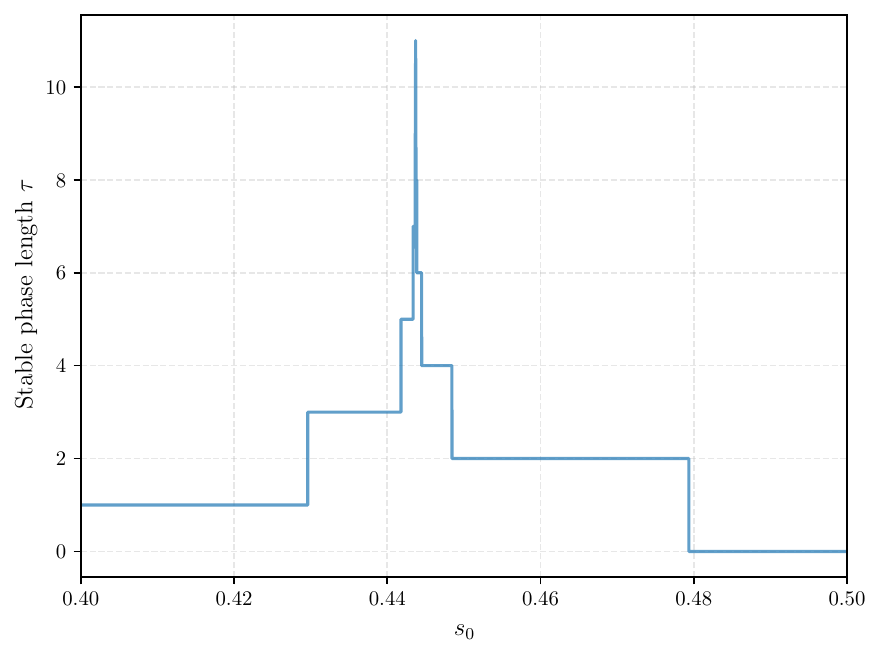"}
    \caption{Grid search for problem \eqref{eq:P_max_stable_phase}, showing the length of the stable phase $\tau$ as a function of $s_0$ for $r_0=1$ in the interval $[0.4, 0.5]$}
    \label{fig:gridsearch search zoom}
\end{figure}

To further investigate this pattern, we turn to a more efficient approach without relying on the search over a huge grid.
Due to the well-structured pattern and especially the isolated peaks, we can reduce the search space to a single region of long stable phases, for instance $[0.4, 0.5]$ in \cref{fig:gridsearch search zoom}.
In these regions, one can empirically observe that the stable phase length $\tau$ increases monotonically in a stair-like fashion to a single maximum before decreasing monotonically again.
The stairs on both sides of the maximum have a constant size of $2$ (except close to the maximum).
Furthermore, we observe that the parity of $\tau$ changes when comparing starting points on the left and right of the maximum.
For instance, in the interval $[0.4, 0.5]$, the stable length of starting points smaller than $0.446$ is odd, while it is even for points larger than $0.446$.

This observation motivates a specialized bisection search approach.
Starting with an interval $[l,u]$ containing a single peak, we compute the stable phase length $\tau$ for the starting point $m = \frac{l+u}{2}$ in the middle of the interval.
We choose the next interval as $[l,m]$ if $\tau(m)$ and $\tau(u)$ have the same parity, otherwise we choose $[m,u]$.
This choice guarantees that the starting point with the longest stable phase is consistently contained in the interval.
Furthermore, this approach is efficient as it reduces the search space by half in each step.

While these observations remain heuristic, the bisection search turns out to work remarkably well in practice.
In high-precision runs, it produces monotone segments that stay on the stable trajectory for arbitrarily long times.
In particular, this undermines our initial upper-bound intuition: one cannot uniformly bound the length of the stable phase by controlling the frequency of jumps.

\subsection{Backward dynamics}
\label{sec:appendix backward dynamics}
In this section, we derive the backward dynamics. We start by defining them and then show that they are well defined and that the forward dynamics reverse them.

\begin{definition}
    \label{def: backward dynamics}
    Let the domain $\widetilde M$ be defined as
    \begin{equation}
        \widetilde M \defi \left\{(r,s) \in \R^2 \mid 0 < r \leq \frac{1}{3},\ 0 \leq s \leq \frac{1}{1+r}\right\}.
    \end{equation}
    The \emph{backward dynamics} are the function $G: \widetilde M \to M$ defined by
    \begin{equation}
        G(r,s) = \left(\frac{r}{X(r,s)+Y(r,s)},\ X(r,s)+Y(r,s)\right),
    \end{equation}
    where
    \begin{align}
        X(r,s) &= (1+r)s^2 - r, \label{eq:def X}\\
        Y(r,s) &= \sqrt{(1- s^2)(1-(1+r)^2s^2)}. \label{eq:def Y}
    \end{align}
    to which we refer as $X$ and $Y$ for simplicity in the following proofs.
\end{definition}

Note that we define the backward dynamics only on the smaller domain $\widetilde M$ to ensure that the backward dynamics are well defined.
This comes from the technical fact that $X+Y = 0$ for some $(r,s) \in M$.
Furthermore, there exist points $x \in B_1(0)$ that cannot be reached by an \gls{fw} step from any other point $x' \in B_1(0)$.
An example of this is the center point $x = 0$.

As a next step, we show that the backward dynamics are well defined and that the forward dynamics reverse them.
For that we first prove a lower bound on $X+Y$ for all $(r,s) \in \widetilde M$.

\begin{lemma}
    \label{lem: X Y}
    For all $(r,s) \in \widetilde M$, we have $Y \in \R$ and $X + Y \geq \frac{5}{12}$.
\end{lemma}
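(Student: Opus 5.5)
We split the statement into its two parts: realness of $Y$, and the bound on $X+Y$. For the bound, the plan is to view $X+Y$ as a function of $u\defi s^2$ for fixed $r$, show that it is concave in $u$, and reduce to checking the two endpoints of the admissible $u$-interval. The only step needing an idea is the concavity; the rest is routine evaluation and monotonicity in $r$.

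\emph{Realness of $Y$.} Let $(r,s)\in\widetilde M$. Since $r>0$, we have $0\le s\le \frac{1}{1+r}<1$. Hence both $1-s^2\ge 0$ and $1-(1+r)^2s^2\ge 0$. Their product is nonnegative, so $Y\in\R$ and $Y\ge 0$.

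\emph{Concavity in $u$.} Fix $r\in(0,\frac13]$ and put $u=s^2\in[0,\frac{1}{(1+r)^2}]$. Then
\begin{equation*}
  h(u)\defi X+Y=(1+r)u-r+\sqrt{(1-u)\bigl(1-(1+r)^2u\bigr)}.
\end{equation*}
The first part, $(1+r)u-r$, is affine in $u$. For the square root, the geometric mean $(a,b)\mapsto\sqrt{ab}$ is concave on $\R_{\ge 0}^2$. Here $a=1-u$ and $b=1-(1+r)^2u$ are affine in $u$ and nonnegative on the whole interval. So $u\mapsto\sqrt{(1-u)(1-(1+r)^2u)}$ is concave there, being a concave function composed with an affine map. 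Hence $h$ is concave on $[0,\frac{1}{(1+r)^2}]$ and attains its minimum at an endpoint. (One could instead check the sign of $h''$ directly, but the geometric-mean argument avoids that computation.)

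\emph{Endpoints.} At $u=0$ we get $h(0)=1-r\ge \frac23$. At $u=\frac{1}{(1+r)^2}$ the square root vanishes, so $h=\frac{1}{1+r}-r$. This is strictly decreasing in $r$, so on $(0,\frac13]$ it is at least its value at $r=\frac13$, namely $\frac34-\frac13=\frac{5}{12}$. Combining both endpoints, $X+Y\ge\min\{\frac23,\frac5{12}\}=\frac5{12}$ for all $(r,s)\in\widetilde M$.

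The endpoint computation also shows the constant $\frac{5}{12}$ is attained exactly at $r=\frac13$, $s=\frac34$, so the bound is tight.
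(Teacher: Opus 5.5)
Your proof is correct, but it takes a different route from the paper.

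The paper proves that $X+Y$ is \emph{monotonically decreasing} in $s$. It differentiates, reduces the sign condition to $2(1+r)sY \le (2+2r+r^2)s-2(1+r)^2s^3$, and closes this with $\sqrt{ab}\le\frac{a+b}{2}$. After that only the single endpoint $s=\frac{1}{1+r}$ has to be evaluated.

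You instead substitute $u=s^2$ and get \emph{concavity} with no computation, from the concavity of the geometric mean composed with an affine map. The price is checking both endpoints, and the one at $u=0$ is trivial. Your argument is shorter and computation-free.

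However, concavity alone does not give monotonicity, and the paper reuses the monotonicity. In \cref{lem: backward step} it evaluates at $s=0$ to get the upper bound $X+Y\le 1-r$, which it needs for $s'(1+r')\le 1$. If you wanted that bound as well, either of these works:
\begin{itemize}
\item Note that $h'(0)=(1+r)-\frac{1+(1+r)^2}{2}=-\frac{r^2}{2}\le 0$. Together with your concavity, this makes $h$ nonincreasing on the whole interval.
\item Apply the AM--GM bound to $Y$, which gives $X+Y\le 1-r-\frac{r^2s^2}{2}$ directly.
\end{itemize}

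Your remark that the bound is attained at $(r,s)=(\frac13,\frac34)$ is also correct.
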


\begin{proof}
    First we note that $(r,s) \in \widetilde M$ implies $(1-(1+r)^2s^2) \geq 1-\frac{(1+r)^2}{(1+r)^2} = 0$ and $1-s^2 \geq 1 -\frac{1}{(1+r)^2} \geq 0$.
    Therefore, $Y$ is well-defined and $Y \geq 0$.

    Next, we show that $X+Y \geq \frac{5}{12}$ for all $(r,s) \in \widetilde M$.
    We show that $X+Y$ is monotone decreasing in $s$. For that we compute the derivative of $X+Y$ with respect to $s$:
    \begin{align*}
        \frac{\partial}{\partial s} (X+Y) &= 2(1+r)s + \frac{1}{2\sqrt{(1- s^2)(1-(1+r)^2s^2)}} \cdot \left(-2s(1-(1+r)^2s^2) + (1-s^2)(-2(1+r)^2s)\right) \\
        & = 2(1+r)s - \frac{s(1-(1+r)^2s^2) + (1-s^2)(1+r)^2s}{\sqrt{(1- s^2)(1-(1+r)^2s^2)}} \\
        & = 2(1+r)s - \frac{(2+2r+r^2)s - 2(1+r)^2s^3}{\sqrt{(1- s^2)(1-(1+r)^2s^2)}} \\
    \end{align*}
    Therefore, the derivative is negative if and only if
    $$
        2(1+r)s \cdot \sqrt{(1- s^2)(1-(1+r)^2s^2)} \leq (2+2r+r^2)s - 2(1+r)^2s^3.
    $$
    Since both factors in the root are positive for $s \in [0,\frac{1}{1+r}]$, we can use the inequality $\sqrt{ab} \leq \frac{a+b}{2}$ to get:
    \begin{align*}
        2(1+r)s \cdot \sqrt{(1- s^2)(1-(1+r)^2s^2)}
        &\leq 2(1+r)s \cdot \frac{1-s^2 + 1-(1+r)^2s^2}{2} \\
        &= 2(1+r)s -(1+r)s^3  -(1+r)^3s^3\\
        &= (2+2r)s - (1+r)(2(1+r)+r^2)s^3\\
        &\leq (2+2r+r^2)s - 2(1+r)^2s^3.
    \end{align*}
    Consequently, $X+Y$ is monotone decreasing in $s$ and thus attains its minimum at $s = \frac{1}{1+r}$.
    Together with $r \leq \frac{1}{3}$, we get
    \begin{align*}
        X+Y &= (1+r)s^2 - r + \sqrt{(1- s^2)(1-(1+r)^2s^2)}\\
        &\geq \frac{1+r}{(1+r)^2} - r + \sqrt{\left(1- \frac{1}{(1+r)^2}\right)\left(1-\frac{(1+r)^2}{(1+r)^2}\right)}\\
        &= \frac{1}{1+r} - r\\
        &\geq \frac{1}{1+\frac{1}{3}} - \frac{1}{3} = \frac{5}{12}.
    \end{align*}
\end{proof}

Now we show that the backward dynamics $G$ are well defined and that the forward dynamics $F$ reverse them.

\begin{lemma}
    \label{lem: backward step}
    Let $F: M \to M, (r,s) \mapsto \left(rs, \sqrt{\frac{1-(r+1)^2s^2}{2-2s-(2+r)rs^2}}\right)$.
    Then the function $G: \widetilde M \to M$ with
    \begin{equation*}
        G(r,s) = \left(\frac{r}{X+Y}, X+Y\right)
    \end{equation*}
    is well defined and $F(G(r,s)) = (r,s)$ holds for all $(r,s) \in \widetilde M$.
\end{lemma}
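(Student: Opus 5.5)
The plan is to verify the two claims directly from the algebraic definitions: first that $G(r,s)\in M$, then that $F\circ G$ is the identity on $\widetilde M$. Fix $(r,s)\in\widetilde M$ and set $s' \defi X+Y$ and $r' \defi r/s'$.

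\emph{Well-definedness.} By \cref{lem: X Y}, $Y$ is real and $s'\ge \frac{5}{12}>0$. Hence $r'$ is well defined and satisfies $0<r'\le \frac{12}{5}r\le \frac{4}{5}\le 1$, using $r\le\frac13$. Since $r'\le 1$, membership in $M$ reduces to $0\le s'\le \frac{1}{1+r'}$, and nonnegativity is already clear. The upper bound is equivalent to $s'(1+r')=s'+r\le 1$, i.e.\ to
\[
Y\le 1-(1+r)s^2 .
\]
The right-hand side is nonnegative because $(1+r)s^2\le \frac{1}{1+r}\le 1$. So I will square both sides and compare. Expanding gives
\[
(1-(1+r)s^2)^2-(1-s^2)(1-(1+r)^2s^2)=s^2\bigl(1+(1+r)^2-2(1+r)\bigr)=r^2s^2\ge 0,
\]
which proves the inequality and hence $G(r,s)\in M$.

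\emph{Inversion.} Write $F(r',s')=(r'',s'')$. The first coordinate is immediate: $r''=r's'=r$. For the second coordinate I substitute $r's'=r$ into the formula for $F$. This gives $(1+r')s'=s'+r$ and $(2+r')r's'^2=2rs'+r^2$, so
\[
s''^2=\frac{1-(s'+r)^2}{2-2(1+r)s'-r^2}.
\]
The denominator is positive by the proof of \cref{lem:F well defined}, since $(r',s')\in M$.

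It therefore suffices to show that $s'$ solves $1-(s'+r)^2=s^2\bigl(2-2(1+r)s'-r^2\bigr)$. Rearranged, this is the quadratic
\[
s'^2-2Xs'+\bigl(r^2-1+2s^2-r^2s^2\bigr)=0,
\]
whose roots are $X\pm\sqrt{\Delta}$ with $\Delta \defi X^2-r^2+1-2s^2+r^2s^2$. Expanding, I get
\[
\Delta=(1+r)^2s^4-(2+2r+r^2)s^2+1=(1-s^2)\bigl(1-(1+r)^2s^2\bigr)=Y^2.
\]
So $s'=X+Y$ is a root, and therefore $s''^2=s^2$. Since both $s''\ge 0$ and $s\ge 0$, it follows that $s''=s$, and hence $F(G(r,s))=(r,s)$.

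\emph{Main obstacle.} The only delicate points are bookkeeping. One is the sign and range checks: positivity of $X+Y$, nonnegativity of $1-(1+r)s^2$ before squaring, and positivity of the denominator of $F$. These are handled by \cref{lem: X Y} and \cref{lem:F well defined}. The other is the factorization of the discriminant as $Y^2$, which I expect to be the computational crux and which I would check by direct expansion.
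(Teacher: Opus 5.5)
Your proof is correct and follows essentially the paper's route. The inversion rests on the same quadratic: the paper writes it in $r'$ and factors it as $((X+Y)r'-r)((X-Y)r'-r)$ via $A=X^2-Y^2$, while you write it in $s'=r/r'$ and show its discriminant is $Y^2$, which is the same identity. The differences are minor and slightly in your favor. You obtain $s'+r\le 1$ from the direct identity $(1-(1+r)s^2)^2-Y^2=r^2s^2$ rather than from monotonicity of $X+Y$ in $s$. You also verify forward that $X+Y$ is a root, checking the denominator's positivity and the sign of the square root, whereas the paper characterizes all preimages and then concludes ``by construction''.
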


\begin{proof}
    Let $(r,s) \in \widetilde M$.
    By \cref{lem: X Y}, we have $X,Y\in \R$ and $X+Y \geq \frac{5}{12}$, thus $G(r,s)$ is well defined and real.
    Let $(r',s') = G(r,s)$, we show that $(r',s') \in M$.
    It is immediate that $s'=X+Y \geq \frac{5}{12} > 0$ and $r'=\frac{r}{X+Y} > 0$ as $r>0$.
    Furthermore, we have $r' = \frac{r}{s'} \leq \frac{\frac13}{\frac{5}{12}} = \frac{4}{5} < 1$.
    It is left to show that $s' \leq \frac{1}{1+r'}$ or equivalently $s'(1+r') \leq 1$.
    We have already shown in \cref{lem: X Y} that $s' = X+Y$ is decreasing in $s$.
    For $s=0$ we get $s' \leq -r + 1$ and thus
    $$
        s'(1+r') = s'\left(1+\frac{r}{s'}\right) = s' + r \leq 1-r +r = 1.
    $$
    In consequence, the function $G:\widetilde M \to M$ is well-defined.
    
    Finally, we show that $F$ inverts $G$.
    Fix $(r,s)\in \widetilde M$ and consider a preimage $(r',s')$ under $F$, i.e., assume that $F(r',s')=(r,s)$.
    By the first coordinate of $F$, we have $r = r's'$ and thus $s' = \frac{r}{r'}$.
    Consider \eqref{eq: def F} and substitute $s' = \frac{r}{r'}$
    $$
        s^2 = \frac{1 - (r'+1)^2 (r/r')^2}{2 - 2(r/r') - (2+r')r' (r/r')^2}
    $$

    Multiplying both sides by $(r')^2$ and rearranging:
    $$
        s^2 \left(2(r')^2 - 2rr' - (2+r')r' r^2\right) = (r')^2 - (r'+1)^2 r^2
    $$

    Expanding and collecting powers of $r'$:
    \begin{align*}
    s^2 \left(2(r')^2 - 2rr' - 2r'r^2 - (r')^2r^2\right) &= (r')^2 - r^2((r')^2 + 2r' + 1) \\
    2s^2 (r')^2 - 2rs^2 r' - 2r^2 s^2 r' - r^2 s^2 (r')^2 &= (r')^2 - r^2 (r')^2 - 2r^2 r' - r^2
    \end{align*}

    Rearranging into standard quadratic form $A(r')^2 + B(r') + C = 0$ with coefficients:
    \begin{align*}
        A &= -r^2 s^2 + r^2 + 2s^2 - 1 \\
        B &= - 2r^2 s^2 + 2r^2 - 2rs^2 \\
        C &= r^2
    \end{align*}
    Note that $B = -2rX$ and $A=X^2-Y^2$,
    \begin{align*}
        X^2-Y^2
        &= \left((1+r)s^2 - r\right)^2 - (1-s^2)\left(1-(1+r)^2s^2\right)\\
        &= ((1+r)^2s^4 - 2r(1+r)s^2 + r^2 - 1+s^2 + (1+r)^2s^2 -(1+r)^2s^4\\
        &= s^2 \left(- 2r(1+r) +1+(1+r)^2 \right) -1+r^2\\
        &= s^2\left(2-r^2\right) -1+r^2\\
        &= A
    \end{align*}
    Therefore, we have
    $$
        (X-Y)(X+Y)r^2-2rXr' + r^2 = ((X+Y)r'-r)((X-Y)r' - r)=0,
    $$
    which yields two roots:
    \begin{align}
        r_- &= \frac{r}{X + Y} = \frac{r}{(1+r)s^2 - r + \sqrt{(s^2-1)((1+r)^2 s^2 - 1)}} \label{eq:r-minus} \\
        r_+ &= \frac{r}{X - Y} = \frac{r}{(1+r)s^2 - r - \sqrt{(s^2-1)((1+r)^2 s^2 - 1)}} \label{eq:r-plus}
    \end{align}
    This shows that the forward dynamics are not injective and there are actually two points $(r_-, \frac{r}{r_-})$ and $(r_+,\frac{r}{r_+})$ that lead to $(r,s)$.
    Recall that by definition of the backward dynamics,
    $$
        (r',s') = G(r,s) = \left(\frac{r}{X+Y}, X+Y\right) = (r_-, r/r_-),
    $$
    i.e., $G$ selects the branch with denominator $X+Y$.
    This branch is exactly the one used in the backward reconstruction algorithm and is the branch for which our invariant region is propagated in the subsequent analysis.
    Therefore, $F(G(r,s)) = F(r',s') = (r,s)$ by construction.
\end{proof}

\begin{lemma}
    \label{lem: monotone condition}
    We have $s_{t} \geq s_{t-1}$ if and only if
    \begin{equation}
        \label{eq: monotonicity condition}
        (1+s_t)r_t(2s_t+r_t) \geq (1-s_t)(2s_t + 1).
    \end{equation}
\end{lemma}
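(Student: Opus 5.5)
The plan is to read the lemma in the backward setting: $(r_{t-1},s_{t-1})=G(r_t,s_t)$, so that $s_{t-1}=X+Y$ with $X,Y$ evaluated at $(r,s)=(r_t,s_t)\in\widetilde M$. I then turn $s_{t-1}\le s_t$ into a polynomial inequality by isolating the square root $Y$ and squaring. Indices are dropped: $r=r_t$, $s=s_t$.

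\textbf{Step 1 (isolate $Y$).} The condition $s_{t-1}\le s_t$ reads $Y\le s-X=s-(1+r)s^2+r$. The right-hand side factors as
\begin{equation*}
s-(1+r)s^2+r = r(1-s^2)+s(1-s) = (1-s)\bigl(r(1+s)+s\bigr).
\end{equation*}
On $\widetilde M$ we have $0\le s\le \frac{1}{1+r}<1$ and $r>0$, so this quantity is strictly positive. Since $Y\ge 0$ by \cref{lem: X Y}, both sides are nonnegative. Squaring is therefore an equivalence: the condition is equivalent to $(1-s)^2\bigl(r(1+s)+s\bigr)^2\ge (1-s^2)\bigl(1-(1+r)^2s^2\bigr)$.

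\textbf{Step 2 (cancel and expand).} Because $1-s>0$, divide by $1-s$ to obtain
\begin{equation*}
(1-s)\bigl(r(1+s)+s\bigr)^2 \ge (1+s)\bigl(1-(1+r)^2s^2\bigr).
\end{equation*}
Now expand both sides as polynomials in $r$ and collect the difference LHS$-$RHS.
\begin{itemize}
\item The $r^2$ coefficient is $(1-s)(1+s)^2+(1+s)s^2=(1+s)$.
\item The $r$ coefficient is $2s(1+s)(1-s)+2s^2(1+s)=2s(1+s)$.
\item The constant term is $(1-s)s^2-(1+s)(1-s^2)=(1-s)\bigl(s^2-(1+s)^2\bigr)=-(1-s)(1+2s)$.
\end{itemize}
Hence the difference equals $(1+s)r(r+2s)-(1-s)(1+2s)$. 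Its nonnegativity is exactly \eqref{eq: monotonicity condition}. Every step above was an equivalence, which gives both directions.

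\textbf{Main obstacle.} The only delicate point is justifying that the squaring and the division are equivalences. This rests on two facts: the factorization in Step 1, which shows $s-X>0$, and $s<1$ on $\widetilde M$. Once these are in place, the rest is routine coefficient bookkeeping. I would double-check that bookkeeping symbolically, since a sign slip in the constant term would change the threshold.
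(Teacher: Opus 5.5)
Your proof is correct and follows essentially the same route as the paper: use $s_{t-1}=X+Y$, isolate $Y$, square, and reduce the difference to $(1-s_t)\bigl((1+s_t)r_t(2s_t+r_t)-(1-s_t)(2s_t+1)\bigr)$. Your factorization $s-X=(1-s)\bigl(r(1+s)+s\bigr)>0$ justifies the squaring step, which the paper leaves implicit. Your coefficient check also avoids the typos in the paper's intermediate expansion, which writes $-r_t$ for $+r_t$ and $(1-s_t)^2$ for $1-s_t^2$, although the paper's final factorization is right.
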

\begin{proof}
    Using $s_{t-1} = X+Y$, we get directly that $s_t > s_{t-1}$ if and only if $(s_t-X)^2 - Y^2 > 0$.
    Expanding and grouping by powers of $r$ yields:
    \begin{align*}
        (s_t-X)^2 - Y^2
        &= (s_t - (1+r_t)s_t^2-r_t)^2 - (1-s_t)^2(1-(1+r_t)^2s_t^2) \\
        &= s_t^2 +(1+r_t)^2s_t^4 +r_t^2 - 2(1+r_t)s_t^3 -2s_tr_t +2r_t(1+r_t)s_t^2 \\
        &\quad - (1-2s_t+ s_t^2) +(1-2s_t+ s_t^2)(1+r_t)^2s_t^2 \\
        &= (-2s_t^3+3s_t^2-1)+2r_ts_t(1-s_t^2) + r_t^2(1-s_t^2)\\
        &= -(1-s_t)^2(2s_t+1)+(1-s_t^2)r_t(2s_t+r_t)\\
        &= (1-s_t)\left(-(1-s_t)(2s_t+1)+(1+s_t)r_t(2s_t+r_t)\right)
     \end{align*}
     Since $0 < s_t < 1$, we get $s_t > s_{t-1} > 0$ if and only if \eqref{eq: monotonicity condition} holds.
\end{proof}

\section{Proof of the lower bound}\label{sec:appendix-lower-bound}

In this section, we prove the lower bound for the \gls{fw} algorithm applied to \eqref{eq:Pball}.
We begin with two technical lemmas that help us to bound the trajectory of the backward pass.
In \cref{lem: bounds X+Y}, we find upper and lower bounds for the auxiliary variables $X$ and $Y$ defined in \eqref{eq:def X} and \eqref{eq:def Y} that match up to the second order.
\begin{lemma}
    \label{lem: bounds X+Y}
    For $r \in [0,\frac{1}{10}]$ and $s = 1-\frac{4}{3}r+cr^2$ with $c \in \left[1, \frac{5}{2}\right]$, $X+Y$ is well-defined and satisfies the following bounds:
    \begin{align*}
        X+Y &\geq 1 - \frac{4}{3}r+\left(\frac{11}{9} - \frac{1}{2}c\right)r^2-5r^3,\\
        X+Y &\leq 1 - \frac{4}{3}r+\left(\frac{11}{9} - \frac{1}{2}c\right)r^2+2r^3.
    \end{align*}
\end{lemma}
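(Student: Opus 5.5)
Substituting $s = 1-\frac{4}{3}r+cr^2$ into $X$ and $Y$ turns each into an explicit function of $r$ with parameter $c$. I will expand each to second order in $r$ and control the third-order remainder uniformly for $r\in[0,\frac{1}{10}]$ and $c\in[1,\frac52]$.

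\textbf{Well-definedness.} I first check $s\le \frac{1}{1+r}$, so that $(r,s)\in\widetilde M$; then \cref{lem: X Y} gives $Y\in\R$ and $X+Y>0$. The condition is equivalent to $(1+r)(1-\frac43 r+cr^2)\le 1$, i.e.
\[
-\tfrac13 r+(c-\tfrac43)r^2+cr^3\le 0 .
\]
Dividing by $r$, it suffices that $-\frac13+(c-\frac43)r+cr^2\le0$. With $c\le\frac52$ and $r\le\frac1{10}$ the left side is at most $-\frac13+0.117+0.025<0$, so this holds.

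\textbf{The polynomial part $X$.} $X=(1+r)s^2-r$ is a polynomial in $r$. Since $s^2=1-\frac83r+(\frac{16}{9}+2c)r^2+O(r^3)$, we get
\[
X=1+\Bigl(2c-\tfrac{8}{9}\Bigr)r^2+O(r^3),
\]
and the $O(r^3)$ part is an explicit polynomial in $r$ with coefficients depending on $c$.

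\textbf{The square-root part $Y$.} The two factors are
\[
1-s^2=\tfrac83 r-\Bigl(\tfrac{16}{9}+2c\Bigr)r^2+O(r^3),
\]
\[
1-(1+r)^2s^2=1-(1+r)^2\Bigl(1-\tfrac83r+\dots\Bigr)=\tfrac23 r+\Bigl(\tfrac{23}{9}-2c+\dots\Bigr)r^2+O(r^3).
\]
Their product is $\frac{16}{9}r^2\bigl(1+ar+br^2+O(r^3)\bigr)$ for explicit $a,b$ depending on $c$. Hence $Y=\frac43 r\sqrt{1+ar+br^2+\dots}$, and expanding the square root gives
\[
Y=\tfrac43 r+\kappa_2 r^2+O(r^3).
\]
Combining with $X$, the claimed expansion $1-\frac43r+(\frac{11}{9}-\frac12c)r^2$ forces
\[
Y=-\tfrac43 r\ \text{(from the $r$-coefficient)}\quad\text{and}\quad \kappa_2=\tfrac{11}{9}-\tfrac12 c-\Bigl(2c-\tfrac89\Bigr).
\]
The first requirement fails, since $Y\ge0$. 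This means I have misexpanded $X$ at first order, and I will recompute: $X=(1+r)(1-\frac83r+\dots)-r=1-\frac83 r+r-r+\dots=1-\frac83 r+O(r^2)$. Adding $Y=\frac43r+\dots$ then gives the correct linear term $-\frac43 r$. The second-order coefficients are matched the same way, by careful expansion with $c$ kept symbolic.

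\textbf{Rigorous remainder bounds.} To make the $O(r^3)$ terms rigorous without Taylor remainders of the square root, I will set $P(r,c)=(1-s^2)(1-(1+r)^2s^2)$, a polynomial, and let $q_\pm(r,c)=\frac43 r+\kappa_1 r^2+\kappa_\pm r^3$ be the target bounds on $Y$ implied by the claimed bounds on $X+Y$, with $X$ treated exactly. Since all quantities are nonnegative on the range, $Y\le q_+$ is equivalent to $P\le q_+^2$, and $Y\ge q_-$ follows from $P\ge q_-^2$ (trivially when $q_-<0$). Each difference $q_\pm^2-P$ (respectively $P-q_\pm^2$) is a polynomial whose terms up to order $r^3$ cancel. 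After dividing by $r^4$, it remains to check the sign of a low-degree polynomial in $(r,c)$ on the box $[0,\frac1{10}]\times[1,\frac52]$. I will do this by bounding its leading constant term against the sum of absolute values of the higher terms, using $r\le\frac1{10}$.

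\textbf{Main obstacle.} The hardest step is getting the exact second-order coefficients right and verifying that the chosen cubic slacks, $-5r^3$ and $+2r^3$, suffice uniformly in $c$. The leftover polynomial inequalities are where sign errors or tight margins could bite. If a margin turns out negative I would split the $c$-interval, or use monotonicity in $c$ to reduce the check to the endpoints $c=1$ and $c=\frac52$.
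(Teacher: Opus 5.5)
This is essentially the paper's proof. The paper also checks $(r,s)\in\widetilde M$, expands $X$ exactly, and writes $Y^2=P^2+r^4Q$ with $P=\frac43 r+(\frac{19}{9}-\frac52 c)r^2$; it then gets $Y\le P$ from $Q<0$ and $Y\ge P-5r^3$ from $Q+10(\frac43+(\frac{19}{9}-\frac52 c)r)-25r^2>0$ by interval arithmetic on $[0,\frac1{10}]\times[1,\frac52]$, which is your compare-the-squares-and-divide-by-$r^4$ step. The only difference is bookkeeping: the paper bounds $X$ separately, with cubic slack $+2r^3$ above and $0$ below, whereas folding the exact $X$ into your targets makes $q_\pm$ quintic rather than cubic in $r$. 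In a write-up, state $X=1-\frac83 r+(2c-\frac89)r^2+(\frac{16}{9}-\frac23 c)r^3+(c^2-\frac83 c)r^4+c^2r^5$ directly rather than the first expansion that dropped the linear term.
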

\begin{proof}
    \linkofproof{lem: bounds X+Y}
    First, since $r \in [0,\frac{1}{10}]$ and $c \in \left[1, \frac{5}{2}\right]$, we have $cr \leq \frac14$, hence
    \[
    s=1-\frac{4}{3}r+cr^2 \le 1-\frac{4}{3}r+\frac14 r < 1-r \le \frac{1}{1+r},
    \]
    where the last inequality follows from convexity of $\frac{1}{1+r}$ and $1-r$ being its tangent at $r=0$.
    Therefore, we have $(r,s) \in \widetilde M$ and $X+Y$ is well-defined by \cref{lem: X Y}.

    Next we derive the bounds for $X$. For $s= 1-\frac{4}{3}r + cr^2$, we get
    \begin{align*}
        X &= (1+r)s^2-r 
        =    1
        - \frac{8}{3} r
        + \left( 2c - \frac{8}{9} \right) r^2
        + \left( \frac{16}{9} - \frac{2}{3} c \right) r^3
        + \left( c^2 - \frac{8}{3} c \right) r^4
        + c^2 r^5\\
        &\leq 1 - \frac{8}{3}r+\left(2c-\frac{8}{9}\right)r^2+\frac{10}{9}r^3 + \frac{25}{4}r^5 \\
        &\leq 1 - \frac{8}{3}r+\left(2c-\frac{8}{9}\right)r^2+2r^3,
    \end{align*}
    where we have used that
    $$
        \frac{16}{9} - \frac{2}{3} c \leq \frac{10}{9}, \quad c^2 -\frac{8}{3}c<0 \quad \text{and} \quad c^2 \leq \frac{25}{4}
    $$
    for $c \in \left[1, \frac{5}{2}\right]$.
    Finally we see that $\frac{10}{9}r^3 + \frac{25}{4}r^5 < 2r^3$ for $r \in [0,\frac{1}{10}]$.
    For the lower bound, we consider the third and fourth order terms:
    \begin{align*}
        \left(\frac{16}{9} - \frac{2}{3} c\right)r^3 + \left(c^2 - \frac{8}{3} c\right)r^4
        \geq \left(\frac{16}{9} - \frac{2}{3} c + \frac{c^2}{10} - \frac{8}{30} c\right)r^3
        =\left( \frac{1}{10}\left(c- \frac{14}{3}\right)^2 - \frac{2}{5}\right)r^3
        \end{align*}
    where we have used that the fourth order term is non-positive for $c \in \left[1, \frac{5}{2}\right]$ and thus we can lower bound by setting $r$ to $\frac{1}{10}$.
    The resulting quadratic in $c$ is decreasing for $c < \frac{14}{3}$ and thus we can lower bound by setting $c$ to $\frac{5}{2}$. Thus, we get
    \begin{align*}
        \left(\frac{16}{9} - \frac{2}{3} c\right)r^3 + \left(c^2 - \frac{8}{3} c\right)r^4
        \geq \left( \frac{1}{10}\left(\frac{5}{2}- \frac{14}{3}\right)^2 - \frac{2}{5}\right)r^3
        = \frac{5}{72}r^3 \geq 0.
    \end{align*}
    Together with $c^2 r^5 \geq 0$, we get
    $$
        X \geq 1 - \frac{8}{3}r+\left(2c-\frac{8}{9}\right)r^2.
    $$
    
    Next we consider the bounds for $Y$ by working with $Y^2$ directly.
    Using $Y^2=(1-s^2)(1-(1+r)^2s^2)$ and $s=1-\frac43 r+cr^2$, one gets by direct expansion
    \[
      Y^2 = P(r,c)^2 + r^4 Q(r,c),
    \]
    where
    \begin{align*}
      P(r,c) &= \frac{4}{3}r+\left(\frac{19}{9} - \frac{5}{2}c\right)r^2,
    \end{align*}
    and
    \begin{align*}
      Q(r,c)=\,&\left(-\frac{307}{27}+\frac{65}{9}c-\frac94 c^2\right)
      + r\left(-\frac{256}{81}+\frac{536}{27}c-6c^2\right) \\
      &+ r^2\left(\frac{256}{81}+\frac{64}{27}c-\frac{49}{3}c^2+4c^3\right) \\
      &+ r^3\left(-\frac{256}{27}c+\frac{16}{3}c^2+\frac{8}{3}c^3\right) \\
      &+ r^4\left(\frac{32}{3}c^2-\frac{20}{3}c^3+c^4\right)
      + r^5\left(-\frac{16}{3}c^3+2c^4\right)
      + r^6 c^4.
    \end{align*}
    For simplicity, we write only $P$ and $Q$ without the arguments $r$ and $c$ in the following.
    On $c\in[1,\frac52]$, the coefficient intervals can be coarsely enclosed by integers:
    \[
      Q\in [-8,-5]+[8,14]r+[-31,-6]r^2+[-1,52]r^3+[1,7]r^4+[-11,-3]r^5+[1,40]r^6.
    \]
    Since $r^k\in[0,10^{-k}]$, we can derive the bounds for $Q$ by interval arithmetic:
    \[
      Q\le -5+\frac{14}{10}+\frac{52}{10^3}+\frac{7}{10^4}+\frac{40}{10^6}<0
    \]
    and
    \[
      Q\ge -8-\frac{31}{10^2}-\frac{1}{10^3}-\frac{11}{10^5}>-8.4.
    \]
    Hence $Y^2\le P^2$, and since $Y\ge 0$ and $P\ge 0$, we obtain $Y\le P$.

    For the lower bound, we compute
    \[
      Y^2-(P-5r^3)^2
      = r^4\left(Q+10\left(\frac43+\left(\frac{19}{9}-\frac52c\right)r\right)-25r^2\right).
    \]
    Now
    $$
        \frac43+\left(\frac{19}{9}-\frac52c\right)r\ge \frac43 + \left(\frac{19}{9}-\frac52\cdot \frac{5}{2}\right) \cdot \frac{1}{10} = \frac{331}{360} \geq \frac{9}{10}
    $$
    on the same domain, so
    \[
      Q+10\left(\frac43+\left(\frac{19}{9}-\frac52c\right)r\right)-25r^2
      \ge -8.4+10\cdot \frac{9}{10}-25\cdot \frac{1}{10^2}>0.
    \]
    Therefore $Y^2\ge (P-5r^3)^2$, and since $P-5r^3\ge0$ for $r\in[0,\frac{1}{10}]$, we get
    $Y\ge P-5r^3$.
    Altogether,
    $$
      Y \leq \frac{4}{3}r+\left(\frac{19}{9} - \frac{5}{2}c\right)r^2
      \qquad \text{and} \qquad
      Y \geq \frac{4}{3}r+\left(\frac{19}{9} - \frac{5}{2}c\right)r^2-5r^3,
    $$
    for $r \in [0,\frac{1}{10}]$.
    Combining the bounds yields the result.
\end{proof}

As a next step, we show that the factor of the second-order term, when stating $s$ in terms of $r$, can be bounded to a constant interval.
\begin{lemma}
    \label{lem: bounds c'}
    For $r \in (0,\frac{1}{10}]$, let $s = 1 - \frac{4}{3}r + cr^2$ with $c \in \left[1, \frac{5}{2}\right]$. Then the backward step $(r',s') = G(r,s)$ is well-defined.
    Furthermore, one has $s' = 1 - \frac{4}{3}r' + c'(r')^2$ with $c' \in \left[1, \frac{5}{2}\right]$.
\end{lemma}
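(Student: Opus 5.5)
The plan is to reduce everything to \cref{lem: bounds X+Y}: first establish well-definedness, then solve exactly for $c'$, then bound it by monotonicity in $s'$.

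\emph{Well-definedness.} Since $r\le\frac1{10}$ and $c\le\frac52$, the first part of the proof of \cref{lem: bounds X+Y} gives $s<1-r\le\frac1{1+r}$. Together with $r\le\frac1{10}\le\frac13$ this puts $(r,s)\in\widetilde M$, so $G(r,s)$ is well defined by \cref{lem: backward step}. Moreover $s'=X+Y$ and $r'=r/s'$, with $s'\ge\frac5{12}>0$ by \cref{lem: X Y}.

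\emph{Closed form for $c'$.} Because $r'>0$, the value $c'$ is uniquely determined by $c'\defi(s'-1+\frac43 r')/(r')^2$. Substituting $r'=r/s'$ and multiplying through by $(s')^2/r^2$ gives the exact identity
\[
c'=\frac{\varphi(s')}{r^2},\qquad \varphi(\sigma)\defi\sigma^3-\sigma^2+\tfrac43 r\sigma .
\]
Here $\varphi'(\sigma)=3\sigma^2-2\sigma+\frac43 r$ is positive for $\sigma\ge\frac23$. By \cref{lem: bounds X+Y}, $s'\ge 1-\frac43 r-\frac1{36}r^2-5r^3>\frac23$ on the whole range $r\le\frac1{10}$, so $\varphi$ is increasing on the relevant interval. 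It therefore suffices to evaluate $\varphi$ at the two endpoints $\sigma_\pm=1-\frac43 r+ar^2+e_\pm r^3$ from \cref{lem: bounds X+Y}, where $a\defi\frac{11}9-\frac c2\in[-\frac1{36},\frac{13}{18}]$, $e_-=-5$ and $e_+=2$.

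\emph{Expansion at the endpoints.} Writing $\sigma-1=-\frac43r+ar^2+er^3$, a direct expansion gives
\[
\frac{\varphi(\sigma)}{r^2}=\Bigl(a+\tfrac{16}9\Bigr)+\Bigl(e-\tfrac83a-\tfrac{64}{27}\Bigr)r+R(r,a,e)\,r^2 ,
\]
where $R$ is an explicit polynomial of low degree in $r$ whose coefficients are polynomials in $a$ and $e$. The leading term is $a+\frac{16}9=3-\frac c2\in[\frac74,\frac52]$.

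\emph{The upper bound (main obstacle).} The upper bound $c'\le\frac52$ is the delicate part, because the leading term equals exactly $\frac52$ when $c=1$. So I must show that the first-order correction at $e=e_+=2$ is strictly negative and dominates the remainder. The correction is $2-\frac83a-\frac{64}{27}\le 2+\frac{2}{27}-\frac{64}{27}=-\frac{8}{27}$, which is negative uniformly in $c$. I would then bound $|R|$ on $a\in[-\frac1{36},\frac{13}{18}]$, $e\in[-5,2]$, $r\le\frac1{10}$ by coarse interval arithmetic, in the same style as the bound on $Q$ in \cref{lem: bounds X+Y}. The goal is to show $|R|\,r\le\frac8{27}$, which gives $c'\le 3-\frac c2-(\text{positive})\le\frac52$.

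\emph{The lower bound.} Here there is ample slack. At $e=-5$ the linear coefficient is at least $-5-\frac{104}{54}-\frac{64}{27}>-10$, so with $r\le\frac1{10}$ we get $c'\ge\frac74-1-|R|r^2\ge1$ once $|R|\le 25$. I expect this to follow from the same crude coefficient enclosure.

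Combining the two endpoint estimates with the monotonicity of $\varphi$ yields $c'\in[1,\frac52]$.
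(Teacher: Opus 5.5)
Your reduction is sound and a bit sharper than the paper's. The identity $c'=\varphi(s')/r^2$ is exactly the paper's $c'=\frac{(X+Y)^2(X+Y-1)}{r^2}+\frac{4(X+Y)}{3r}$, and monotonicity of $\varphi$ on $[\frac23,\infty)$ lets you evaluate at $\sigma_\pm$ instead of bounding $(X+Y)^2$, $X+Y-1$ and $X+Y$ separately. The gap is in the endpoint estimates.

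\emph{The linear coefficient.} It is miscomputed: $2\delta^2$ contributes $-\frac{16}{3}ar^3$ and $\frac43 r\delta$ contributes $+\frac43 ar^3$. So the coefficient is $e-4a-\frac{64}{27}=e+2c-\frac{196}{27}$, i.e.\ $2c-\frac{142}{27}$ at $e=2$ and $2c-\frac{331}{27}$ at $e=-5$, which are the paper's coefficients.

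\emph{The lower bound.} It does not close. Arithmetically, $\frac74-1=\frac34<1$. More fundamentally, combining the smallest leading term ($c=\frac52$) with the most negative linear term ($c=1$, coefficient $-\frac{277}{27}$) gives about $0.72$ at $r=\frac1{10}$. So bounding the two terms independently in $c$ cannot reach $1$. The missing idea is the coupling the paper uses. Since $r<\frac14$, the map $c\mapsto 3-\frac c2+(2c-\frac{331}{27})r$ is decreasing, so its minimum is $\frac74-\frac{196}{27}r\ge\frac74-\frac{196}{270}\approx 1.024$. With a margin of only about $0.024$, a two-sided bound $|R|\le 25$ (costing up to $0.25$) is useless. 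You need a signed bound: at $e=-5$ the constant term of $R$ is $2a^2+\frac{16}{3}a+20>19$, and one checks $R>0$ throughout. So it is the lower bound, not the upper one, that is tight.

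\emph{The upper bound.} Your target $|R|\,r\le\frac8{27}$ is unattainable. At $e=2$ the constant term of $R$ is $2a^2+\frac{16}{3}a-8\approx-8.15$ for $c=\frac52$, so $|R|\,r\approx 0.7$ at $r=\frac1{10}$; over your stated range $e\in[-5,2]$, $|R|$ exceeds $20$. The estimate works only because $R$ has a favorable sign. At $e=2$ its constant term is at most about $-3.1$, while the higher-order terms add at most about $1.5$ for $r\le\frac1{10}$, so $R<0$. With the corrected linear coefficient $2c-\frac{142}{27}\le-\frac7{27}$, this gives $c'\le 3-\frac c2\le\frac52$. Alternatively, couple in $c$ as the paper does: $3-\frac c2+(2c-\frac{142}{27})r$ is decreasing in $c$, hence maximal at $c=1$. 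There the linear coefficient is $-\frac{88}{27}$, which absorbs even a crude remainder bound.

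In both directions you need one-sided or $c$-coupled estimates, not absolute-value bounds on $R$.
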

\begin{proof}
    \linkofproof{lem: bounds c'}
    We first note that $s \geq 1- \frac{4}{3}r  \geq 1-\frac{4}{3} \cdot \frac{1}{10} \geq \frac{1}{2}$ and $s \leq 1 - \frac{4}{3}r + \frac{5}{2}r^2 < \frac{1}{1+r}$ for $r \in (0,\frac{1}{10}]$.
    Therefore, we have that $(r,s) \in \widetilde M$ and \cref{lem: backward step} implies that $(r',s')=G(r,s)$ is well-defined.

    We move now to the bounds for $c'$.
    We start by rearranging the expression $s' = 1 - \frac{4}{3}r' + c'(r')^2$ and substituting $s' = X+Y$ and $r' = \frac{r}{X+Y}$ yields
    $$
        c' = \frac{s' + \frac{4}{3}r' -1}{(r')^2} = \frac{(X+Y)^3-(X+Y)^2}{r^2} + \frac{4(X+Y)}{3r}.
    $$
    Next we consider the expression $Z = (X+Y)^3-(X+Y)^2$.
    One can directly integrate both bounds of \cref{lem: bounds X+Y} to obtain bounds for $Z$.
    However, by rewriting $Z$ as $(X+Y)^2(X+Y-1)$ one can obtain more precise bounds:
    \begin{align*}
        &\quad\; (X+Y)^2(X+Y-1)\\
        &\leq \left(1 - \frac{4}{3}r+\left(\frac{11}{9} - \frac{1}{2}c\right)r^2-5r^3\right)^2
        \left(-\frac{4}{3}r+\left(\frac{11}{9} - \frac{1}{2}c\right)r^2+2r^3\right)\\
        &= 50 r^9
        + \left(-\frac{5}{2}c + \frac{55}{9}\right) r^8
        + \left(-2 c^2 + \frac{88}{9}c - \frac{1508}{81}\right) r^7
        + \left(-\frac{1}{8}c^3 + \frac{11}{12}c^2 - \frac{697}{54}c + \frac{5759}{729}\right) r^6\\
        &\quad + \left(-c^2 + \frac{71}{9}c - \frac{2230}{81}\right) r^5
        + \left(\frac{1}{2}c^2 - \frac{46}{9}c + \frac{1418}{81}\right) r^4
        + \left(\frac{8}{3}c - \frac{62}{9}\right) r^3
        + \left(-\frac{1}{2}c + \frac{43}{9}\right) r^2
        - \frac{4}{3} r\\
        &\leq - \frac{4}{3} r + \left(-\frac{1}{2}c + \frac{43}{9}\right) r^2 + \left(\frac{8}{3}c - \frac{62}{9}\right) r^3 + 13r^4 + 4r^8 + 50 r^9\\
        &\leq - \frac{4}{3} r + \left(-\frac{1}{2}c + \frac{43}{9}\right) r^2 + \left(\frac{8}{3}c - \frac{62}{9}\right) r^3 + 14r^4.
    \end{align*}
    Here we used $c\in\left[1,\frac52\right]$ to bound the higher-order coefficients uniformly,
    \[
        \frac{1}{2}c^2 - \frac{46}{9}c + \frac{1418}{81}\le 13,\quad
        -\frac{5}{2}c + \frac{55}{9}\le 4,
    \]
    while the coefficients of $r^5,r^6,r^7$ are non-positive on $\left[1,\frac52\right]$, and then bounded the higher-order terms for $r \in \left(0,\frac{1}{10}\right]$.
    The lower bound follows analogously:
    \begin{align*}
        &\quad\; (X+Y)^2(X+Y-1)\\
        &\geq \left(1 - \frac{4}{3}r+\left(\frac{11}{9} - \frac{1}{2}c\right)r^2+2r^3 \right)^2
        \left(1 - \frac{4}{3}r+\left(\frac{11}{9} - \frac{1}{2}c\right)r^2-5r^3 - 1 \right)\\
        &= -20 r^9 
        + \left(8 c - \frac{176}{9}\right) r^8
        + \left(-\frac{1}{4} c^2 + \frac{11}{9}c + \frac{1607}{81}\right) r^7
        + \left(-\frac{1}{8}c^3 + \frac{11}{12} c^2 - \frac{193}{54} c - \frac{10873}{729}\right) r^6\\
        &\quad + \left(-c^2 + \frac{71}{9} c - \frac{1222}{81}\right) r^5
        + \left(\frac{1}{2}c^2 - \frac{46}{9}c + \frac{1418}{81}\right) r^4
        + \left(\frac{8}{3}c - \frac{125}{9}\right) r^3
        + \left(-\frac{1}{2}c + \frac{43}{9}\right) r^2
        - \frac{4}{3} r\\
        &\geq - \frac{4}{3} r + \left(-\frac{1}{2}c + \frac{43}{9}\right) r^2 +\left(\frac{8}{3}c - \frac{125}{9}\right) r^3 + 7r^4 - 9r^5 - 21r^6 + 20r^7 - 12r^8 - 20r^9\\
        &\geq - \frac{4}{3} r + \left(-\frac{1}{2}c + \frac{43}{9}\right) r^2 +\left(\frac{8}{3}c - \frac{125}{9}\right) r^3 +5r^4.
    \end{align*}
    Here we used $c\in\left[1,\frac52\right]$ to bound the higher-order coefficients uniformly,
    \[
        \frac{1}{2}c^2 - \frac{46}{9}c + \frac{1418}{81}\ge 7,\quad
        -c^2 + \frac{71}{9} c - \frac{1222}{81}\ge -9,\quad
        -\frac{1}{8}c^3 + \frac{11}{12} c^2 - \frac{193}{54} c - \frac{10873}{729}\ge -21,
    \]
    \[
        -\frac{1}{4} c^2 + \frac{11}{9}c + \frac{1607}{81}\ge 20,\quad
        8 c - \frac{176}{9}\ge -12,
    \]
    and then bounded the higher-order terms for $r \in \left(0,\frac{1}{10}\right]$.

    Consequently we get
    \begin{align*}
        c' =\ &\frac{(X+Y)^2(X+Y-1)}{r^2} + \frac{4(X+Y)}{3r}\\
        \leq\ & \frac{- \frac{4}{3} r + \left(-\frac{1}{2}c + \frac{43}{9}\right) r^2 + \left(\frac{8}{3}c - \frac{62}{9}\right) r^3 + 14r^4}{r^2}
        + \frac{4\left(1 - \frac{4}{3}r+\left(\frac{11}{9} - \frac{1}{2}c\right)r^2+2r^3\right)}{3r}\\
        =\ & - \frac{4}{3r} + \left(-\frac{1}{2}c + \frac{43}{9}\right) + \left(\frac{8}{3}c - \frac{62}{9}\right) r + 14r^2
        + \frac{4}{3r} - \frac{16}{9} + \left(\frac{44}{27}-\frac{2}{3}c\right)r + \frac{8}{3}r^2\\
        =\ & 3-\frac{c}{2} + \left(2c -\frac{142}{27}\right)r +\frac{50}{3}r^2 
        \leq 3-\frac{1}{2} + \left(2 -\frac{142}{27}\right)r + \frac{50}{3}r^2 
        \leq \frac{5}{2},
    \end{align*}
    where we have used that $-\frac{c}{2} + \left(2c -\frac{142}{27}\right)r$ is monotone decreasing in $c$ for $r \in \left[0, \frac{1}{10}\right]$  and so we could fix $c=1$.
    Further, note that $-\frac{88}{27}r + \frac{50}{3}r^2$ is non-positive for $r \in \left[0, \frac{1}{10}\right]$.
    For the lower bound, we have that
    \begin{align*}
        c' =\ & \frac{(X+Y)^2(X+Y-1)}{r^2} + \frac{4(X+Y)}{3r}\\
        \geq\ & \frac{- \frac{4}{3} r + \left(-\frac{1}{2}c + \frac{43}{9}\right) r^2 +\left(\frac{8}{3}c - \frac{125}{9}\right) r^3 +5r^4}{r^2}
        +\frac{4\left( 1 - \frac{4}{3}r+\left(\frac{11}{9} - \frac{1}{2}c\right)r^2-5r^3 \right)}{3r}\\
        =\ & - \frac{4}{3r} + \left(-\frac{1}{2}c + \frac{43}{9}\right) +\left(\frac{8}{3}c - \frac{125}{9}\right) r +5r^2
        + \frac{4}{3r} - \frac{16}{9} +\left(\frac{44}{27}-\frac{2}{3}c\right)r - \frac{20}{3}r^2\\
        =\ & 3 - \frac{c}{2} +\left(2c - \frac{331}{27}\right) r - \frac{5}{3}r^2\\
        \geq\ & 3 - \frac{5}{4} + \left(5 - \frac{331}{27}\right)r - \frac{5}{3}r^2
        \geq\ 3 - \frac{5}{4} - \frac{196}{270} - \frac{5}{300} 
        = \frac{136}{135} > 1,
    \end{align*}
    where we have used that $-\frac{c}{2} + \left(2c -\frac{331}{27}\right)r$ is monotone decreasing in $c$ for $r \in \left[0, \frac{1}{10}\right]$  and so we could fix $c=\frac{5}{2}$.
    Additionally, $\left(5 - \frac{331}{27}\right)r - \frac{5}{3}r^2$ is monotone decreasing for $r>0$, so we could fix $r=\frac{1}{10}$.
\end{proof}

Now we can prove the main result.
The idea is to use the two-dimensional recurrence for the residuals $r_t$ and contraction factors $s_t$ that correspond to the actual \gls{fw} trajectory (see \cref{prop:forward dynamics}), where points $(r_t, s_t) \in M$ correspond to feasible iterates in $B_1(0)$ by \cref{lem:reconstruct theta}.
We then employ the backward reconstruction of a starting point for our lower bound.
Starting from a terminal point with small residual $\epsilon$, we iteratively apply the backward dynamics $G$ from \cref{def: backward dynamics}, which are inverted by the forward dynamics $F$ according to \cref{lem: backward step}.
The key technical contribution is showing via induction, using the bounds in \cref{lem: bounds c',lem: bounds X+Y}, that these backward iterates remain in a controlled region where $s = 1 - \frac{4}{3}r + cr^2$ with $c \in [1, \frac{5}{2}]$.
This invariant yields a telescoping bound on the residuals, from which the $\Omega(1/t^2)$ lower bound follows.

\lowerboundshortstepFW*
\begin{proof}\linkofproof{thm:lower_bound_shortstepFW}

    First we consider the default problem \eqref{eq:Pball} with diameter $D=2$ and strong convexity parameter $\mu=2$ and later extend the result to the generalized problem.

    We begin by considering iterates in the variables $(r_t, s_t)$, which correspond to the residual and contraction factor along Frank-Wolfe (FW) trajectories.
    Specifically, by \cref{lem:reconstruct theta}, any pair $(r_t, s_t)$ in the admissible region describes a feasible iterate on the Euclidean ball, allowing us to reconstruct the underlying trajectory of the FW algorithm on the $B_1(0)$ ball.

    Before we proceed with the proof, we remind the reader of the domain of the backward dynamics $G$, namely $\widetilde M$:
    $$
        \widetilde M = \left\{(r,s) \in \R^2 ~\middle|~ 0 < r \leq \frac{1}{3}, 0 \leq s \leq \frac{1}{1+r}\right\}.
    $$

    We start by applying \cref{alg:backward_reconstruction} with threshold $r_{\max}=\frac{1}{10}$ to construct a starting point $(r_0, s_0)$ from a given terminal point $(\epsilon, 1 - \frac{4}{3}\epsilon + 2 \epsilon^2)$ using the backward dynamics $G$ in \eqref{eq:backward dynamics}.
    For a given target horizon $T \in \mathbb{N}$, we choose
    \begin{equation}\label{eq:epsilon_choice_shortstep_lb}
        \epsilon \defi \frac{1}{10 + T \frac{8}{3}}.
    \end{equation}
    Let $(u_0,v_0) = (\epsilon, 1 - \frac{4}{3}\epsilon + 2 \epsilon^2)$ be the starting point of the backward reconstruction and define the backward iterates $(u_t,v_t) \defi G^t(u_0,v_0)$.
    \cref{alg:backward_reconstruction} terminates when the residual is above the threshold, i.e., at the first index $t$ with $u_t \geq r_{\max} = \frac{1}{10}$. We denote by $\widehat T$ the last index such that $u_{\widehat T} < r_{\max}$ (so $u_{\widehat T+1} \geq r_{\max}$), and consider the backward iterates $(u_t,v_t)_{t=0}^{\widehat T}$.

    We first show that the backward iterates are well-defined.
    Since $\epsilon\le \frac{1}{10}\le \frac{1}{3}$ and $v_0= 1-\frac{4}{3}\epsilon+2\epsilon^2 \le \frac{1}{1+\epsilon}$ for $\epsilon\in(0,\frac{1}{10}]$, we have $(u_0,v_0)\in\widetilde M$.
    We continue by induction to show that $(u_t,v_t) \in \widetilde M$ for all $t=0,\dots,\widehat T$.
    By \cref{lem: backward step} we have that $(u_{t+1},v_{t+1}) = G(u_t,v_t)$ is well-defined and belongs to $M$ for $t=0,\dots,\widehat T-1$.
    Since $u_{t+1} \leq r_{\max} = \frac{1}{10}$, it follows by definition of $M$ that $v_{t+1} \leq \frac{1}{1+u_{t+1}}$ and thus $(u_{t+1},v_{t+1}) \in \widetilde M$.

    Next, we relate the sequence length $\widehat T$ to the target horizon $T$.
    Again by induction using that $v_0=1-\frac{4}{3}u_0+2u_0^2$ and then iteratively applying \cref{lem: bounds c'},
    we have that
    \begin{equation}
        \label{eq: v_t lower bound}
        v_{t}=1-\frac{4}{3}u_{t}+c_{t}u_{t}^2
    \end{equation}
    with $c_{t}\in[1,\frac{5}{2}]$  for $t=0,\dots,\widehat T$ and in particular $v_{t}\ge 1-\frac{4}{3}u_{t}$.
    By the backward update rule, we have $u_{t+1} = \frac{u_t}{v_{t+1}}$ and thus
    $$
        \frac{u_t}{u_{t+1}} = v_{t+1} \geq 1- \frac{4}{3}u_{t+1}.
    $$
    Rearranging yields
    $$
        \frac{1}{u_{t+1}(1- \frac{4}{3}u_{t+1})} \geq \frac{1}{u_t}.
    $$
    For $t+1\le \widehat T$, we have $u_{t+1}\le \frac{1}{10}$ and thus $\frac{4}{3}u_{t+1}\in \left(0,\frac{1}{2}\right)$. Using that $\frac{1}{1-x} \leq 1+2x$ for $x \in \left(0, \frac{1}{2}\right)$ yields
    $$
        \frac{1}{u_{t+1}(1- \frac{4}{3}u_{t+1})} \leq \frac{1}{u_{t+1}}\left(1+\frac{8}{3}u_{t+1}\right) = \frac{1}{u_{t+1}} + \frac{8}{3}.
    $$
    Combining both inequalities yields,
    \begin{equation}
        \label{eq: lower bound 1/u_t}
        \frac{1}{u_{t+1}} \geq \frac{1}{u_t} - \frac{8}{3}.
    \end{equation}
    Telescoping \eqref{eq: lower bound 1/u_t} gives $\frac{1}{u_t} \ge \frac{1}{u_0} - \frac{8}{3}t$ for all $t=0,\dots,\widehat T$.
    With the choice \eqref{eq:epsilon_choice_shortstep_lb}, we have $\frac{1}{u_0} = \frac{1}{\epsilon} = 10 + \frac{8}{3}T$ and thus
    $u_T \le \frac{1}{10}$.
    By definition of $\widehat T$, this implies $T \le \widehat T$.

    Define now the starting point $(r_0,s_0)$ as the last backward iterate below the threshold, i.e.,
    $$
        (r_0,s_0) \defi (u_{\widehat T}, v_{\widehat T}).
    $$
    This links the backward trajectory to a forward \gls{fw} trajectory.
    Since $(u_t,v_t) \in \widetilde M$ for all $t=0,\dots,\widehat T$, \cref{lem: backward step} implies that
    \begin{equation}
        \label{eq: forward backward relation}
        (r_t,s_t) = F^t(r_0,s_0) = F^t(G^{\widehat T}(u_0,v_0)) = G^{\widehat T-t}(u_0,v_0)=(u_{\widehat T-t},v_{\widehat T-t})
    \end{equation}
    for all $t=0, \dots, \widehat T$, and in particular for all $t=0,\dots,T$.
    By \cref{lem:reconstruct theta}, the trajectory $\{(r_t,s_t)\}_{t=0}^{\widehat T} \subset M$ corresponds to feasible points $\{x_t\}_{t=0}^{\widehat T} \subset B_1(0)$.

    Now we turn to the lower bound on the residuals $r_t$.
    By \eqref{eq: lower bound 1/u_t}, we have for all $t=0,\dots,\widehat T-1$,
    $$
        \frac{1}{u_{t}} \leq \frac{1}{u_{\widehat T}} + \frac{8}{3}(\widehat T-t),
    $$
    and thus
    $$
        u_{t} \geq \frac{u_{\widehat T}}{1 + \frac{8}{3}u_{\widehat T}(\widehat T-t)}.
    $$
    Together with \eqref{eq: forward backward relation} we get
    $$
        r_t = u_{\widehat T-t} \geq \frac{u_{\widehat T}}{1 + \frac{8}{3}u_{\widehat T}t} = \frac{r_0}{1 + \frac{8}{3}r_0t}
    $$
    for all $t=0, \dots, \widehat T$, and in particular for all $t=0,\dots,T$.

    Next, we show that $r_0$ is bounded away from $0$ uniformly and hence is independent of $T$.
    Since $u_{\widehat T} < \frac{1}{10}$, \cref{lem: bounds c'} implies that the next iterate $(u_{\widehat T+1},v_{\widehat T+1})$ is well-defined and satisfies
    $$
        v_{\widehat T+1} = 1 - \frac{4}{3}u_{\widehat T+1} + c_{\widehat T+1}u_{\widehat T+1}^2
    $$
    for some $c_{\widehat T+1}\in\left[1,\frac{5}{2}\right]$. In particular,
    $$
        v_{\widehat T+1} \ge 1 - \frac{4}{3}u_{\widehat T+1} + u_{\widehat T+1}^2
        = \left(u_{\widehat T+1}-\frac{2}{3}\right)^2 + \frac{5}{9} \ge \frac{5}{9}.
    $$
    Using $u_{\widehat T} = v_{\widehat T+1}u_{\widehat T+1}$ and $u_{\widehat T+1}\ge \frac{1}{10}$ by definition of $\widehat T$ yields $r_0 = u_{\widehat T} \ge \frac{5}{9}\cdot\frac{1}{10} = \frac{1}{18}$.

    Next, $f(x_t) - f(x^*) = \|x_t-p\|^2 = r_t^2$ yields the result for the default model problem \eqref{eq:Pball},
    $$
        f(x_t) - f(x^*) \geq \frac{r_0^2}{(1+\frac{8}{3}r_0t)^2} = \Omega\left(\frac{1}{t^2}\right).
    $$
    Finally, we consider the generalized model problem 
    $$
        \min_{x \in B_R(0)} \tilde f(x) \defi \frac{\mu R^2}{2} f\left(\frac{x}{R}\right) = \min_{x \in B_1(0)} \frac{\mu R^2}{2} f(x).
    $$
    whose objective is $\mu$-strongly convex and whose constraint set has diameter $D=2R$.
    Therefore, the result for the generalized problem follows from the result for the default model problem \eqref{eq:Pball} by scaling the lower bound by $\mu R^2/2$,
    $$
        \tilde f(x_t) - \tilde f(x^*) \geq \frac{\mu R^2}{2} \frac{r_0^2}{(1+\frac{8}{3}r_0t)^2} = \Omega\left(\frac{\mu R^2}{t^2}\right) = \Omega\left(\frac{\mu D^2}{t^2}\right).
    $$
\end{proof}

\begin{restatable}[Monotonicity of contraction rates]{corollary}{monotoneContractionRates}\label{cor: monotone contraction rates}\linktoproof{cor: monotone contraction rates}
    For any $T \in \mathbb{N}$, there exists an $(r_0,s_0) \in M$ such that
    $$
        s_{t+1} \geq s_t
    $$
    for all $t=0, \dots, T-1$.
\end{restatable}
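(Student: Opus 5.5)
I will reuse the trajectory built in the proof of \cref{thm:lower_bound_shortstepFW} and check monotonicity along it with the criterion of \cref{lem: monotone condition}. Fix $T$ and set $\epsilon = 1/(10+\frac{8}{3}T)$. Run the backward iterates $(u_t,v_t)=G^t(u_0,v_0)$ from $(u_0,v_0)=(\epsilon,1-\frac43\epsilon+2\epsilon^2)$, and put $(r_0,s_0)=(u_{\widehat T},v_{\widehat T})$. The proof of the theorem already gives three facts. First, $T\le\widehat T$. Second, $(r_t,s_t)=(u_{\widehat T-t},v_{\widehat T-t})\in\widetilde M\subset M$ for $t=0,\dots,\widehat T$. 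Third, $s_t=1-\frac43 r_t+c_tr_t^2$ with $c_t\in[1,\frac52]$ and $r_t\in(0,\frac1{10}]$. So it suffices to show $s_{t+1}\ge s_t$ for every forward pair with $t+1\le\widehat T$.

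Since $(r_t,s_t)=G(r_{t+1},s_{t+1})$, the step $s_{t+1}\ge s_t$ is exactly the backward relation ``$s$ at the later index is at least $s$ at the earlier index'' from \cref{lem: monotone condition}, applied at the point $(r,s)=(r_{t+1},s_{t+1})$. The task therefore reduces to one pointwise inequality. For every $r\in(0,\frac1{10}]$ and $s=1-\frac43r+cr^2$ with $c\in[1,\frac52]$, I must show
\[
(1+s)\,r\,(2s+r)\;\ge\;(1-s)(2s+1).
\]

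Write $1-s=\frac43r-cr^2$ and expand both sides in $r$.

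The left side is $(2-\frac43r+cr^2)\,r\,(2-\frac53r+2cr^2)=4r-6r^2+O(r^3)$. More precisely, $4r-\frac{16}{3}r^2-\frac{10}{3}r^2+\dots$, so the $r^2$ coefficient is $-\frac{26}{3}+O(c)$. I will compute it exactly.

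The right side is $(\frac43r-cr^2)(3-\frac83r+2cr^2)=4r-(\frac{32}{9}+3c)r^2+O(r^3)$.

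The first-order terms agree, so the inequality is decided at order $r^2$. That coefficient is: left-hand $r^2$ coefficient minus $(-(\frac{32}{9}+3c))$. Carefully, the left side is $r(2-\frac43r+cr^2)(2-\frac53r+2cr^2)$, whose $r^2$ coefficient is $-\frac{10}{3}-\frac83+\dots=-6$ at order one. The difference is then $-6+\frac{32}{9}+3c=3c-\frac{22}{9}$, which is positive for $c\ge1$. This matches the fact that $g(r)=1-\frac43r+\mathcal O(r^2)$ has second-order coefficient below $1$, so the trajectory sits in the stable region.

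The remaining work is routine. I will bound the $r^3,r^4,r^5$ remainder terms by interval arithmetic over $c\in[1,\frac52]$ and $r\le\frac1{10}$, in the same style as \cref{lem: bounds X+Y}. The aim is to show their total is smaller than $(3c-\frac{22}{9})r^2\ge\frac59r^2$.

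The main obstacle is this margin. It is only $\frac59 r^2$ at $c=1$, so the cubic coefficients, of size around $\frac{10}{3}c$, must be bounded with $r\le\frac1{10}$ honestly. If the crude bounds fail I will keep the exact polynomial in $(r,c)$, divide by $r^2$, and verify positivity of the resulting quadratic-in-$r$ expression on the box directly. Finally, since $T\le\widehat T$, monotonicity holds for all $t=0,\dots,T-1$.
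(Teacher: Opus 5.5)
Your proposal is correct and follows the paper's proof: reuse the trajectory from \cref{thm:lower_bound_shortstepFW}, apply \cref{lem: monotone condition} at $(r_{t+1},s_{t+1})$, and compare the expansions. The exact difference is $(3c-\frac{22}{9})r^2+(\frac23c+\frac{20}{9})r^3+(2c^2-\frac{13}{3}c)r^4+2c^2r^5$, so the cubic term has a positive coefficient and only the quartic term (at least $-\frac{169}{72}r^4$) is negative, which leaves an ample margin for $r\le\frac{1}{10}$. One side remark is off, though it does not affect your proof: the zero $c=\frac{22}{27}$ of your leading coefficient is the threshold of the backward criterion, whereas the paper's forward threshold is $g(r)=1-\frac43r+\frac{70}{27}r^2+\mathcal{O}(r^3)$, which lies above the trajectory, not below it.
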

\begin{proof}\linkofproof{cor: monotone contraction rates}
    We consider the iterates $(r_t,s_t)$ constructed in \cref{thm:lower_bound_shortstepFW}.
    Due to \cref{lem: monotone condition} it suffices to check $(1+s)r(2s+r) \geq (1-s)(2s+1)$ for $s = 1 - \frac43 r + cr^2$ where $c \in \left[1, \frac{5}{2}\right]$.
    Comparing
    \begin{align*}
        (1+s)r(2s+r)
        &= \left(2-\frac43 r + cr^2\right)r\left(2-\frac83 r + 2cr^2+r\right)\\
        &= 4r - 6r^2 + \frac{20}{9}r^3 + 6cr^3 - \frac{13}{3}cr^4 + 2c^2r^5
    \end{align*}
    and
    \begin{align*}
        (1-s)(2s+1)
        &= \left(\frac43 r - cr^2\right)\left(2-\frac83 r + 2cr^2+1\right)\\
        &= 4r - \frac{32}{9}r^2 - 3cr^2 + \frac{16}{3}cr^3 - 2c^2r^4
    \end{align*}
    yields the result.
\end{proof}

\begin{restatable}[Lower Bound for Quadratic Minimization on Ellipsoids]{corollary}{ballquadraticlowerbound}\label{cor:ball_quadratic_lower_bound}\linktoproof{cor:ball_quadratic_lower_bound}
    For any $d \ge 2$ and any horizon $T \in \mathbb{N}$, there exists an ellipsoid $\mathcal{E} \subset \mathbb{R}^d$, a strongly convex quadratic objective $f:\mathbb{R}^d\to\mathbb{R}$, and a starting point $x_0 \in \mathcal{E}$ such that the iterates $\{x_t\}_{t=0}^T$ generated by the \gls{fw} algorithm with exact line search satisfy:
    \begin{equation}
        f(x_t) - f(x^*) \ge \Omega\left(\frac{1}{t^2}\right) \quad \text{for all } t=1,\ldots,T.
    \end{equation}
\end{restatable}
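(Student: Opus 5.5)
The plan is to transport the lower-bound instance of \cref{thm:lower_bound_shortstepFW} through an invertible affine map, using the affine invariance of \gls{fw} with exact line search. Fix an invertible matrix $A\in\R^{d\times d}$ and a vector $b\in\R^d$. Define the ellipsoid $\mathcal{E}\defi A B_1(0)+b$ and the objective $g(y)\defi f\bigl(A^{-1}(y-b)\bigr)=\norm{A^{-1}(y-b)-p}^2$, where $f$ and $p$ are as in \eqref{eq:Pball}. Then $g$ is a strongly convex quadratic with Hessian $2A^{-\top}A^{-1}\succ0$, and $\mathcal{E}$ is an ellipsoid. Taking $A=I$, $b=0$ recovers the ball itself, so existence is immediate. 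The real content is that the argument works for any such pair, which gives the claimed class of ellipsoids.

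The core step is to show that if $\{x_t\}$ is the exact line search \gls{fw} sequence for \eqref{eq:Pball} from $x_0$, then $y_t\defi Ax_t+b$ is the exact line search \gls{fw} sequence for $\min_{\mathcal{E}} g$ from $y_0=Ax_0+b$. The argument is an induction on $t$:
\begin{itemize}
\item By the chain rule, $\nabla g(y_t)=A^{-\top}\nabla f(x_t)$.
\item Hence for any $v=Au+b\in\mathcal{E}$ we have $\innp{\nabla g(y_t),v}=\innp{\nabla f(x_t),u}+\text{const}$. So the \gls{lmo} over $\mathcal{E}$ returns $Av_t+b$, where $v_t$ is given by \eqref{eq:ball_lmo}. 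This output is unique because the minimizer of a linear function over the ball is unique when the gradient is nonzero.
\item The one-dimensional line-search functions coincide: $g\bigl(y_t+\gamma(Av_t+b-y_t)\bigr)=f\bigl(x_t+\gamma(v_t-x_t)\bigr)$. Therefore the exact step sizes $\gamma_t$ are identical, and $y_{t+1}=Ax_{t+1}+b$.
\end{itemize}
Since $x^*=p$ maps to $y^*=Ap+b$, we get $g(y_t)-g(y^*)=f(x_t)-f(x^*)=r_t^2$.

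For each horizon $T$, I take the starting point $x_0$ provided by \cref{thm:lower_bound_shortstepFW}, realized as an actual point of $B_1(0)$ through \cref{lem:reconstruct theta}, and embed it in the plane $\Span\{p,x_0\}$ of \cref{lem:invariant_subspace}; this is where $d\ge2$ is used. The theorem's bound $r_t\ge r_0/(1+\frac83 r_0 t)$ with $r_0\ge\frac1{18}$ then transfers verbatim, giving $g(y_t)-g(y^*)\ge\Omega(1/t^2)$ for $t=1,\dots,T$.

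The main obstacle is keeping the transported constant $\Omega(1/t^2)$ independent of $T$. The ellipsoid and the objective must be fixed before $T$ is chosen, and only $y_0$ may vary with $T$. This works because the theorem already fixes $p$ and varies only $x_0$, so $A$ and $b$ are chosen once. A secondary check is that exact line search, rather than short steps, is the rule being preserved. The short step uses $L\norm{x_t-v_t}^2$, which is not affine invariant, so the corollary is stated for exact line search only; on the ball the two rules coincide, so \cref{thm:lower_bound_shortstepFW} applies before the transport.
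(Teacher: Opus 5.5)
Your proposal is correct and follows essentially the same route as the paper: both transport the ball instance of \cref{thm:lower_bound_shortstepFW} through an invertible change of variables, check that the gradient transforms by the adjoint, that the \gls{lmo} output commutes with the map, and that the one-dimensional line-search functions coincide, so the primal gaps are identical. The only differences are cosmetic: you push forward via a general affine map $y=Ax+b$, whereas the paper pulls the centered ellipsoid $\{x^\top A x\le 1\}$ (with objective Hessian $\alpha A$) back to the ball via $A^{1/2}$; your fixed-instance, $T$-dependent-start version is slightly stronger than the corollary's quantifiers require, and it is correct.
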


\begin{proof}\linkofproof{cor:ball_quadratic_lower_bound}
    Fix $d \ge 2$ and $T \in \mathbb{N}$. We prove this by constructing a specific instance where the objective curvature aligns with the constraint geometry. Let $\mathcal{E} = \{x \in \mathbb{R}^d \mid x^\top A x \le 1\}$ be an ellipsoid defined by any $A \succ 0$. We choose the objective function such that $Q = \alpha A$ for some $\alpha > 0$:
    \begin{equation}
        f(x) = \frac{1}{2}x^\top (\alpha A) x + c^\top x.
    \end{equation}
    
    We reduce this problem instance $(\mathcal{E}, f)$ to the Euclidean unit ball instance $(\mathcal{B}, g)$ characterized in \cref{thm:lower_bound_shortstepFW}. While the \gls{fw} algorithm is known to be affine-invariant, we prove the validity of this reduction here for the sake of completeness.

    Since $A \succ 0$, the matrix square root $A^{1/2}$ is well-defined and invertible. Consider the linear bijection $\Phi: \mathbb{R}^d \to \mathbb{R}^d$ defined by $u = \Phi(x) = A^{1/2}x$. 
    The image of the constraint set $\mathcal{E}$ under $\Phi$ is the unit ball $\mathcal{B}$:
    \begin{equation}
        \begin{aligned}
        x \in \mathcal{E}
        &\iff x^\top A x \le 1 \\
        &\iff (A^{-1/2}u)^\top A (A^{-1/2}u) \le 1 \\
        &\iff \|u\|_2^2 \le 1.
        \end{aligned}
    \end{equation}

    Let $g(u) := f(\Phi^{-1}(u)) = f(A^{-1/2}u)$. Substituting $Q = \alpha A$:
    \begin{align*}
        g(u) &= \frac{1}{2}(A^{-1/2}u)^\top (\alpha A) (A^{-1/2}u) + c^\top (A^{-1/2}u) \\
            &= \frac{\alpha}{2} u^\top u + (A^{-1/2}c)^\top u \\
            &= \frac{\alpha}{2} \|u\|_2^2 + \langle A^{-1/2}c, u \rangle.
    \end{align*}
    Completing the square by defining the target $\tilde{p} := -\frac{1}{\alpha}A^{-1/2}c$, we obtain:
    \begin{equation}
        g(u) = \frac{\alpha}{2} \|u - \tilde{p}\|_2^2 - \frac{\alpha}{2}\|\tilde{p}\|_2^2.
    \end{equation}
    Therefore minimizing $f(x)$ on $\mathcal{E}$ is equivalent to minimizing $\frac{\alpha}{2}\|u - \tilde{p}\|_2^2$ on $\mathcal{B}$.

    We now demonstrate that the \gls{fw} iterates commute with $\Phi$. Let $x_t$ be the iterate in $\mathcal{E}$ and $u_t = A^{1/2}x_t$ be the corresponding point in $\mathcal{B}$.
    
    \textit{(i) Gradient Relation:} 
    \begin{equation}
    \begin{aligned}
    \nabla_x f(x_t) = Qx_t + c 
    &= \alpha A x_t + c \\
    &= A^{1/2}\!\bigl(\alpha u_t + A^{-1/2}c\bigr) \\
    &= A^{1/2}\nabla_u g(u_t).
    \end{aligned}
    \end{equation}

    \textit{(ii) LMO Equivalence:} 
    The direction $s_t^{(x)}$ is computed as:
    \begin{align*}
        s_t^{(x)} &\in \arg\min_{z \in \mathcal{E}} \langle \nabla_x f(x_t), z \rangle \\
                &= \arg\min_{z \in \mathcal{E}} \langle A^{1/2} \nabla_u g(u_t), z \rangle \\
                &= A^{-1/2} \left( \arg\min_{v \in \mathcal{B}} \langle \nabla_u g(u_t), v \rangle \right) \quad \text{(via } z=A^{-1/2}v \text{)} \\
                &= A^{-1/2} s_t^{(u)}.
    \end{align*}
    
    \textit{(iii) Step Size:} 
    The step size $\gamma_t$ minimizes the line search objective.
    Since $f(x_t + \gamma(s_t^{(x)} - x_t)) = g(u_t + \gamma(s_t^{(u)} - u_t))$, the optimal $\gamma_t$ is identical in both domains.

    The sequence $\{u_t\}$ generated implicitly is exactly the \gls{fw} trajectory for minimizing $\frac{\alpha}{2}\|u - \tilde{p}\|_2^2$ on $\mathcal{B}$. By \cref{thm:lower_bound_shortstepFW}, there exists a choice of $\tilde{p}$ and an initialization $u_0^{(T)}$ such that $g(u_t) - g(u^*) \ge \Omega(1/t^2)$ for all $t=1,\ldots,T$.
    Choosing $c = -\alpha A^{1/2}\tilde{p}$ and $x_0^{(T)} = A^{-1/2}u_0^{(T)}$ yields the same lower bound for $f(x_t) - f(x^*)$ for all $t=1,\ldots,T$.
\end{proof}